\documentclass[11pt]{article}
\usepackage[legalpaper, margin=1.2in]{geometry}
\usepackage[utf8]{inputenc}
\usepackage[english]{babel}
\usepackage{graphicx}
\usepackage{amsthm}
\usepackage{amsmath}
\usepackage{color}
\usepackage{xcolor}
\usepackage{amsfonts}
\usepackage{hyperref}
\usepackage{algorithm}
\usepackage{enumerate}
\usepackage{authblk}
\usepackage{caption}
\usepackage{subcaption}
\usepackage{mathtools}
\mathtoolsset{showonlyrefs}
\definecolor{geyellow}{RGB}{244,218,64}
\definecolor{gered}{RGB}{200,16,46}
\definecolor{geblu}{RGB}{0,38,119}
\definecolor{gecyano}{RGB}{25,155,252}
\definecolor{gedarkgray}{RGB}{37, 40, 42}
\definecolor{gelightgray}{RGB}{217, 217, 214}
\definecolor{white}{cmyk}{0, 0, 0, 0}
\definecolor{black}{cmyk}{0, 0, 0, 1}
\definecolor{myblu}{RGB}{118, 177, 237}
\definecolor{mybackgroung}{RGB}{210, 230, 249}
\definecolor{link}{HTML}{0B5A9D}
\definecolor{forestgreen}{RGB}{0, 153, 0}
\definecolor{brown}{RGB}{102, 74, 0}
\definecolor{uca_azzurro}{RGB}{0,127,163}


\newcommand{\R}{\mathbb{R}}

\newcommand{\diff}{\mathrm{d}}
\newcommand{\sign}{\operatornamewithlimits{sign}}
\newcommand{\argmin}{\operatornamewithlimits{arg\ min}}
\newcommand{\esssup}{\operatornamewithlimits{ess\ sup}}
\newcommand{\essinf}{\operatornamewithlimits{ess\ inf}}

\newcommand{\lp}{{L^p(\Omega)}} 
\newcommand{\lpvar}{{L^{p(\cdot)}(\Omega)}} 
\newcommand{\lpvarb}{{L^{p(\cdot)}}} 
\newcommand{\lqvar}{{L^{p'(\cdot)}(\Omega)}} 
\newcommand{\lqvarb}{{L^{p'(\cdot)}}} 
\newcommand{\lpvardual}{{{(L^{p(\cdot)}(\Omega))^*}}} 
\newcommand{\lpvarspass}{{{\mathcal{A}(L^{p'(\cdot)}(\Omega))}}}
\newcommand{\pnorm}{_{\lpvarb}}
\newcommand{\qnorm}{_{\lqvarb}}
\newcommand{\dualnorm}{_{(\lpvarb)^*}}
\newcommand{\spassnorm}{'_{p'(\cdot)}}
\newcommand{\rhopvar}{\rho_{p(\cdot)}}
\newcommand{\rhobarpvar}{\bar{\rho}_{p(\cdot)}}
\newcommand{\rhoqvar}{\rho_{p'(\cdot)}}

\newcommand{\jrho}{\mathbf{J}_{\rho_{p(\cdot)}}}
\newcommand{\jrhobar}{\mathbf{J}_{\bar{\rho}_{p(\cdot)}}}
\newcommand{\rhop}{\rho_{p}}

\newcommand{\dbreg}{\mathcal{D}_{\rhopvar}}

\newcommand{\xk}{x^{k}}
\newcommand{\xkk}{x^{k+1}}
\newcommand{\JpX}{\mathbf{J}_\mathcal{X}^\texttt{p}}

\newcommand{\JrX}{\mathbf{J}_\mathcal{X}^\texttt{r}}
\newcommand{\Jppiuppiu}{\mathbf{J}_{p_+}^{p_+}}

\newtheorem{theorem}{Theorem}[section]
\newtheorem{definition}{Definition}[section]

\newtheorem{lemma}{Lemma}[section]
\newtheorem{prop}{Proposition}[section]
\newtheorem{ass}{Assumption}[section]

\newtheorem{oss}{Remark}[section]

\makeatletter
\g@addto@macro\bfseries{\boldmath}
\makeatother

\title{
Modular-proximal gradient algorithms  in variable exponent Lebesgue spaces 
\thanks{LC acknowledges the support received by the H2020 RISE project NoMADS, and by the CNRS IEA project VaMOS. ML and LC thank the ANR MICROBLIND project. The work of CE is partially supported by GNCS-INDAM.}
}
\author[1,2]{Marta Lazzaretti\thanks{lazzaretti@dima.unige.it}}
\author[2]{Luca Calatroni\thanks{calatroni@i3s.unice.fr}}
\author[1]{Claudio Estatico\thanks{estatico@dima.unige.it}}
\affil[1]{Department of Mathematics, Via Dodecaneso 35, University of Genova, Italy}
\affil[2]{CNRS, UCA, Inria, Laboratoire I3S, Sophia-Antipolis, 06903 France}

\date{}

\begin{document}
\maketitle

\abstract{ 
  We consider structured optimization problems defined in terms of the sum of a smooth and convex function and a proper, lower semicontinuous (l.s.c.), convex (typically nonsmooth) function in reflexive variable exponent Lebesgue spaces $\lpvar$.Due to their intrinsic space-variant properties, such spaces can be naturally used as solution spaces and  combined with space-variant functionals for the solution of ill-posed inverse problems. For this purpose, we propose and analyze two instances (primal and dual) of proximal gradient algorithms in $\lpvar$, where the proximal step, rather than depending on the natural (non-separable) $\lpvar$ norm, is defined in terms of its modular function, which, thanks to its separability, allows for the efficient computation of algorithmic iterates. Convergence in function values is proved for both algorithms, with convergence rates depending on problem/space smoothness. To show the effectiveness of the proposed modeling,  some numerical tests highlighting the flexibility of the space $\lpvar$ are shown for exemplar deconvolution and mixed noise removal problems. Finally, a numerical verification of the convergence speed and computational costs of both algorithms in comparison with analogous ones defined in standard $\lp$ spaces is  presented.
}

\section{Introduction}
\label{sec:intro}
Let $\Omega \subseteq \mathbb R^d$, with $d\in\mathbb N,\ d\ge1$ be a Lebesgue measurable subset with positive measure and let $p(\cdot):\Omega \longrightarrow [1,+\infty]$ be a Lebesgue measurable function. The  variable exponent  Lebesgue space $\lpvar$ is defined in terms of the function $p(\cdot)$ which, differently from conventional $\lp$ spaces, acts as pointwise variable exponent, inducing a specific shift-variant norm. In this work, we will study a proximal-gradient (a.k.a. forward-backward) splitting algorithm to solve composite optimization problems in the form 
\begin{equation}\tag{P} \argmin_{x\in\lpvar} \phi(x):= f(x)+g(x) 
\label{eq:P}
\end{equation}
where $f:\lpvar\longrightarrow \mathbb R \cup \{+\infty\}$ is a proper, convex, and Gateaux differentiable function while $g:\lpvar\longrightarrow \mathbb R \cup \{+\infty\}$ is lower semicontinuous (l.s.c.), proper, convex, and possibly nonsmooth. Optimization problems of this kind often arise in the context of variational regularization of inverse problems such as, e.g., the minimization of Tikhonov-like functionals, where $f$ denotes a data-fidelity term while $g$ stands for the regularization or penalty term, see, e.g. \cite{chambolle_pock_2016} for a review. 

Variable exponent Lebesgue spaces $\lpvarb$ are (non-Hilbertian) Banach spaces, endowed with space-variant geometrical properties which are useful in adaptive regularization techniques, see, e.g., \cite{Brander2019, Briceno,GhilliLorenz2021,Lorenz2016}. Under suitable choices of the exponent function $p(\cdot)$, having $\lpvarb$ as solution space allows for a better recovery of sparse solutions, avoiding oversmoothing reconstruction artifacts of usual regularization models formulated in $L^2(\Omega)$, a property that was already observed in previous works (see, e.g., \cite{Estatico2018}) in the context of iterative regularization methods (with no penalty).

Forward-backward splitting algorithms are a common strategy used to solve problems in the form \eqref{eq:P}. Initially proposed in \cite{ChenRockafellar1997,CombettesWajs2005,LionsMercier1979} in Hilbert spaces $\mathcal{H}$ for functions $f$  with $\mathcal{L}$-Lipschitz gradient, they are defined by the following updating rule for $k\geq 0$
\[
\xkk = \argmin_{x\in\mathcal{H}}\frac{1}{2}\|x-(\xk-\tau_k\nabla f(\xk))\|_{\mathcal{H}}^2+\tau_k g(x),
\]
where $\tau_k>0$ is a suitably chosen sequence of step sizes. Such schemes enable one to exploit the differentiability of the smooth function $f$ in a forward step defined in terms of the gradient of $f$ $\xk\mapsto \xk-\tau_k\nabla f(\xk)$ which is decoupled from the backward step defined in terms of the proximal operator associated to $g$.

The generalization of this algorithm to a Banach space $\mathcal{X}$ is not straightforward since the element $\nabla f(\xk)\in \mathcal{X}^*$ cannot be identified as an element of $\mathcal{X}$ anymore due to the lack of Riesz isomorphism.  As a consequence, the forward step cannot be performed directly on $\xk\in \mathcal{X}$. As a remedy,  we need to introduce the so-called duality maps, which link primal and dual spaces and allow us to perform the forward gradient step either in the dual or in the primal space. The intrinsic nonlinearity of duality mappings, however, introduces new challenges in the definition of a backward step in terms of the proximal operator of $g$. In \cite{Bredies2008,GuanSong2015,GuanSong2021}, forward-backward algorithms have been proposed to solve minimization problems in the form \eqref{eq:P} and defined in a reflexive, strictly convex and smooth Banach space $\mathcal{X}$, by means of suitably defined notions of duality mappings and proximal operators. 

In particular, in \cite{Bredies2008} Bredies introduces the following iterative procedure for smooth functions $f$ with $(\texttt{p}-1)$-H{\"o}lder continuous gradient $\nabla f$  on bounded sets with $1<\texttt{p}\le2$ with constant $K>0$ and step-sizes chosen as $0<\bar{\tau}\le \tau_k \le \frac{\texttt{p}(1-\delta)}{K}$, with $0<\delta<1$:
 \begin{equation}
\xkk \in\argmin _{x \in \mathcal{X}} \frac{1}{\texttt{p}}\|x-\xk\|_\mathcal{X}^{\texttt{p}}+\tau_k\left\langle \nabla f(\xk), x\right\rangle+\tau_k g(x)\,,
\label{brediesiter}
\end{equation}
where $\left\langle u^*,x\right\rangle=\left\langle x,u^*\right\rangle=u^*(x) \in \mathbb R$ denotes the duality pairing, for $u^*\in\mathcal{X}^*$ and $x\in\mathcal{X}$. Notice that, for $g\equiv 0$, the  updating rule \eqref{brediesiter} can be also written as
$$  0 \in \partial\Big(\frac{1}{\texttt{p}}\| \cdot -\xk\|_\mathcal{X}^{\texttt{p}}+\tau_k\left\langle \nabla f(\xk), \cdot \right\rangle \Big)(\xkk),$$
where, for a convex function $h:\mathcal{X}\longrightarrow \mathbb{R}$, $\partial h(w)$ denotes the subdifferential of $h$ evaluated at $z$. By linearity, this leads to the following equivalent inclusions:
 \begin{align}
    0 \in\JpX(\xkk-\xk)+\tau_k \nabla f(\xk)  
    \quad  \Leftrightarrow \quad
    \xkk \in\xk-\tau_k \mathbf{J}_{\mathcal{X}^*}^{\texttt{p}'}(\nabla f(\xk)),   \label{eq:primal_algo}
    \end{align}
where $\JpX: \mathcal{X} \longrightarrow {\mathcal{X}^*}$, $\JpX
= \partial\Big(\frac{1}{\texttt{p}}\|\cdot\|_\mathcal{X}^{\texttt{p}}\Big)$, is the $\texttt{p}$-duality map of the (primal) space $\mathcal{X}$ and $\mathbf{J}_\mathcal{X^*}^{\texttt{p}'}: {\mathcal{X}^*} \longrightarrow \mathcal{X}\,$ is its inverse, that is, the $\texttt{p}'$-duality map of the dual space  $\mathcal{X^*}$, with $\texttt{p}'$ the H{\"o}lder conjugate of $\texttt{p}$ \cite{Cioranescu1990}. 
Since $ \mathbf{J}_\mathcal{X^*}^{\texttt{p}'}(\nabla f(\xk)) \in \mathcal{X}$, the latter inclusion shows that \eqref{brediesiter} reduces to a gradient descent step performed in the primal space $\mathcal{X}$. It can thus be seen a generalization of the  \textit{primal method} described in \cite{Schuster2012} to solve inverse problems $Ax=y \in\mathcal{Y}$, with $A:\mathcal{X}\longrightarrow \mathcal{Y}$ linear operator between two Banach spaces $\mathcal{X}\ni x$ and $\mathcal{Y}$, via the minimization of the residual functional $f: \mathcal{X} \longrightarrow \mathbb R$ defined as $f(x)=\frac{1}{\texttt{p}}\|Ax-y\|_\mathcal{Y}^\texttt{p}$. 
Such interpretation is not obvious from \eqref{brediesiter}, where forward and backward steps are defined as one single minimization problem, so that they cannot be distinguished. Moreover, it shows the tight link between proximal-gradient schemes and regularization theory in Banach spaces  \cite{Schopfer2006,Schuster2012}, notions that relate to the fields of optimization/convex analysis and functional analysis, respectively.

In \cite{GuanSong2015}, the authors considered a different forward-backward splitting algorithm in Banach spaces, where the proximal step depends on Bregman distance as follows
\begin{equation}
\xkk = \argmin_{x \in \mathcal{X}}\frac{1}{\texttt{p}'}\|\xk\|_\mathcal{X}^{\texttt{p}}+\frac{1}{\texttt{p}}\|x\|_\mathcal{X}^{\texttt{p}}-\left\langle \JpX(\xk), x\right\rangle+\tau_k\left\langle\nabla f(\xk), x\right\rangle+\tau_kg(x),
\label{guansongiter}
\end{equation}
with $f$ as above and suitably chosen step-sizes (see \cite{GuanSong2015}). In this second case, the algorithm requires the computation of the forward step in the dual space. Similarly as before, for $g\equiv 0$ we notice that the updating rule \eqref{guansongiter} can now be written as
$$  0 \in \partial\Big(\frac{1}{\texttt{p}'}\|\xk\|_\mathcal{X}^{\texttt{p}}+\frac{1}{\texttt{p}}\|\cdot\|_\mathcal{X}^{\texttt{p}}-\left\langle \JpX(\xk), \cdot\right\rangle+\tau_k\left\langle\nabla f(\xk), \cdot\right\rangle \Big)(\xkk),$$
so that we recover a generalization of the so-called dual method \cite{Schuster2012}
\begin{equation}  \label{eq:dual_algo}
    \JpX(\xkk) \in \JpX(\xk)-\tau_k \nabla f(\xk) \qquad
    \xkk=\mathbf{J}_\mathcal{X^*}^{\texttt{p}'}\Big(\JpX(\xkk)\Big),
\end{equation}
introduced in \cite{Schopfer2006} for $f(x)=\frac{1}{\texttt{p}}\|Ax-y\|_\mathcal{Y}^\texttt{p}$, where, differently from \eqref{brediesiter}, the forward step is now computed in $\mathcal{X}^*$, since we have $\JpX(\xk), \nabla f(\xk) \in \mathcal{X^*}$.

Algorithms \eqref{brediesiter} and \eqref{guansongiter} have been proposed for reflexive, strictly convex and smooth Banach spaces, hence they can a priori be applied to solve \eqref{eq:P} in $\lpvar$ whenever $1<p(t)<+\infty \text{ a.e. in } \Omega$. However, the definitions of  $\|\cdot\|\pnorm$ and of duality mappings make their use impracticable in real applications, since both are not separable, differently from what happens, e.g., in $\lp$ spaces. 
To overcome this issue, we propose here to replace the role of the norm naturally appearing in the definition of the proximal operator by the modular functions defined by
\begin{equation*}
	\rhobarpvar(x):=\int_\Omega \frac{1}{p(t)}|x(t)|^{p(t)} \diff t,   \qquad \rhopvar(x):=\int_\Omega |x(t)|^{p(t)} \diff t \quad \forall \ x\in\lpvar.
\end{equation*}
Due to their additive separability, they allow efficient computations of proximal points.

On these grounds, inspired by \cite{Bredies2008} we study an iterative proximal-gradient algorithm whose iteration for $k\geq 0$ reads as
\[
	\xkk=\argmin_{x\in\lpvar} \rhobarpvar(x-\xk) + \tau_k \langle \nabla f(\xk),x\rangle+\tau_k g(x)
	\]
for a proper choice of the sequence $\tau_k>0$. 
Following \cite{GuanSong2015}, we then propose another proximal-gradient algorithm whose update reads for $k\geq 0$ as
\[
\xkk = \argmin_{x\in\lpvar} \rhobarpvar(x)-\langle \jrhobar(\xk), x\rangle  + \tau_k \langle \nabla f(\xk),x\rangle+\tau_k g(x),
\]
where $\jrhobar$ stands for the gradient of the modular $\rhobarpvar$. 
For both algorithms, we prove convergence in function values and provide an interpretation as primal and dual algorithms, respectively. We next show how the intrinsic space-variant properties of the underlying space $\lpvar$ correspond to the use of adaptive nonstandard thresholding functions which turn out to be a flexible tool in the solution of exemplar deconvolution and denoising problems, as we show in the numerical section. Finally, a numerical verification of the algorithmic convergence rates, in comparison with standard Hilbert and $\lp$-type algorithms is performed.


\section{Preliminaries on variable exponent Lebesgue spaces}
\label{sec:Preliminaries}

We introduce in this section some definitions, basic rationales and key concepts of $\lpvar$ spaces. For a thorough survey of these spaces, see \cite{LpvarBOOK,CruzUribeFiorenzaBOOK}.

\subsection{Modular and Luxemburg norm}

Let the set of all possible exponents be $\mathcal{P}(\Omega):=\{p(\cdot):\Omega \longrightarrow [1,+\infty]\ | \ p(\cdot) \text{ is Lebesgue measurable}\}$. Given $p(\cdot)\in\mathcal{P}(\Omega)$, we denote the essential infimum and essential supremum of $p(\cdot)$ by
\[
p_-:=\essinf_{u\in\Omega} p(u) \qquad \text{and} \qquad p_+:=\esssup_{u\in\Omega} p(u).
\]
For the sake of simplicity, we assume in the following $ 1<p_-\le p_+<+\infty$ (for the general case, see \cite{LpvarBOOK}). We will denote by $\mathcal{F}(\Omega)$ the set of all Lebesgue measurable functions $x:\Omega\longrightarrow\R\cup\{+\infty\}$. In order to characterize spaces $\lpvar$, it is first necessary to introduce the key concept of modular function.
\begin{definition} 
Given an exponent $p(\cdot)\in\mathcal{P}(\Omega)$ with $p_+<+\infty$, the function $\rhopvar:\mathcal{F}(\Omega)\longrightarrow[0,+\infty]$ defined by
\begin{equation}
    \rhopvar(x)=\int_\Omega |x(t)|^{p(t)} \diff t
\label{eq:mod_rho}
\end{equation}
is called modular associated to the exponent function $p(\cdot)$. An alternative definition of modular function consists in considering $\rhobarpvar:\mathcal{F}(\Omega)\longrightarrow[0,+\infty]$ defined by
\begin{equation}
    \rhobarpvar(x)=\int_\Omega \frac{1}{p(t)}|x(t)|^{p(t)} \diff t.
\label{eq:mod_rhobar}
\end{equation}
\end{definition}
We will refer to \eqref{eq:mod_rho} and \eqref{eq:mod_rhobar} as modular functions, as needed. Notice that $\rhopvar(x)$ is the generalization of the $p$-power $\|x\|_{p}^p=\int_\Omega |x(t)|^p\diff t$ in $\lp$ with constant exponent $ p\in (1,+\infty)$. Similarly, $\rhobarpvar(x)$ generalizes the quantity $\frac{1}{p}\|x\|_{p}^p$.  The modular allows us to characterize the variable exponent space $\lpvarb$ and to define its norm, in the general framework of the Luxemburg norms of Orlicz spaces \cite{LpvarBOOK}.

\begin{definition}
The space $\lpvar$ is the set of functions $x\in\mathcal{F}(\Omega)$ such that
$
\rhopvar\Big(\frac{x}{\lambda}\Big)\le 1,
$ 
for some $\lambda>0$. For any $x\in\lpvar$, we define $\|\cdot\|_{\lpvar}:\lpvar\longrightarrow\R$ as
\begin{equation}
    \|x\|\pnorm:=\inf\left\{\lambda>0:\rhopvar\Big(\frac{x}{\lambda}\Big)\le1\right\}.
\label{eq:Lux_norm}
\end{equation}
\end{definition}
\begin{theorem} \cite{LpvarBOOK}
The function $\|\cdot\|_{\lpvar}$ defined in \eqref{eq:Lux_norm} is a norm on $\lpvar$ and the space $\Big (\lpvar, \|\cdot\|_{\lpvar} \Big )$ is a Banach space.
\end{theorem}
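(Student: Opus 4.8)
The plan is to check the norm axioms one by one, leaning on two elementary features of the modular. First, for each fixed $t$ the map $s\mapsto|s|^{p(t)}$ is convex because $p(t)\ge1$, so $\rhopvar$ is convex on $\mathcal{F}(\Omega)$, and it is also monotone in modulus: $|u|\le|v|$ a.e. implies $\rhopvar(u)\le\rhopvar(v)$, hence $\|u\|_{\lpvar}\le\|v\|_{\lpvar}$. Second, for fixed $x$ the map $\lambda\mapsto\rhopvar(x/\lambda)$ is non-increasing on $(0,\infty)$, and since the integrands $|x(t)/\lambda|^{p(t)}$ increase pointwise to $|x(t)/\lambda_0|^{p(t)}$ as $\lambda\downarrow\lambda_0$, the monotone convergence theorem gives $\rhopvar(x/\lambda)\to\rhopvar(x/\lambda_0)$. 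More generally, whenever non-negative functions $P_m$ increase pointwise a.e. to $P$ one has $\rhopvar(P_m/\lambda)\uparrow\rhopvar(P/\lambda)$, which yields the \emph{limiting fact}: if $\sup_m\|P_m\|_{\lpvar}\le C$ then $\|P\|_{\lpvar}\le C$. Taking $P_m=x/\lambda_m$ with $\lambda_m\downarrow\lambda_0:=\|x\|_{\lpvar}$ and using $\rhopvar(x/\lambda)\le1$ for all $\lambda>\lambda_0$ gives the \emph{unit ball property} $\rhopvar(x/\|x\|_{\lpvar})\le1$, i.e. the infimum defining the norm is attained when it is positive and finite.

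With these in hand the norm axioms are routine. Non-negativity is clear; if $x=0$ a.e. then $\rhopvar(x/\lambda)=0$ for every $\lambda$, so $\|x\|_{\lpvar}=0$, whereas if $\|x\|_{\lpvar}=0$ then $\rhopvar(x/\lambda)\le1$ for every $\lambda>0$ and letting $\lambda\downarrow0$ monotone convergence (the integrands blowing up on $\{x\ne0\}$) forces $|\{x\ne0\}|=0$. Positive homogeneity $\|\alpha x\|_{\lpvar}=|\alpha|\,\|x\|_{\lpvar}$ is trivial for $\alpha=0$ and follows for $\alpha\ne0$ from the substitution $\mu=\lambda/|\alpha|$ inside the infimum. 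For the triangle inequality, take $x,y\in\lpvar$ with $\lambda_x:=\|x\|_{\lpvar}$ and $\lambda_y:=\|y\|_{\lpvar}$ both positive (the degenerate case being covered by the previous point): writing $\tfrac{x+y}{\lambda_x+\lambda_y}$ as the convex combination $\tfrac{\lambda_x}{\lambda_x+\lambda_y}\cdot\tfrac{x}{\lambda_x}+\tfrac{\lambda_y}{\lambda_x+\lambda_y}\cdot\tfrac{y}{\lambda_y}$ and using convexity of $\rhopvar$ together with the unit ball property, one gets $\rhopvar\!\big(\tfrac{x+y}{\lambda_x+\lambda_y}\big)\le1$, hence $\|x+y\|_{\lpvar}\le\lambda_x+\lambda_y$. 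The same estimate shows $\lpvar$ is closed under addition (and it is trivially closed under scalar multiplication), so it is a vector space on which $\|\cdot\|_{\lpvar}$ is a genuine norm.

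For completeness I would use the criterion that a normed vector space is complete iff every absolutely convergent series converges. Let $(x_n)\subset\lpvar$ with $S:=\sum_n\|x_n\|_{\lpvar}<\infty$, and set $g_m=\sum_{n=1}^m|x_n|$, $g=\sum_{n=1}^\infty|x_n|$. By the triangle inequality $\|g_m\|_{\lpvar}\le S$, so the limiting fact gives $\|g\|_{\lpvar}\le S<\infty$; in particular $g\in\lpvar$ and $g<\infty$ a.e., so the series $\sum_n x_n(t)$ converges absolutely for a.e. $t$ to some measurable $x$ with $|x|\le g$, whence $x\in\lpvar$. Finally, for the tail $r_m:=x-\sum_{n=1}^m x_n=\sum_{n>m}x_n$ one has $|r_m|\le\sum_{n>m}|x_n|$ a.e., and the partial sums of $\sum_{n>m}|x_n|$ have $\lpvar$-norm at most $\varepsilon_m:=\sum_{n>m}\|x_n\|_{\lpvar}$; the limiting fact and monotonicity in modulus then give $\|r_m\|_{\lpvar}\le\varepsilon_m\to0$, i.e. $\sum_n x_n=x$ in $\lpvar$.

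I expect the only genuinely delicate point to be the triangle inequality, and within it the unit ball property $\rhopvar(x/\|x\|_{\lpvar})\le1$, i.e. the attainment of the defining infimum — exactly the place where the standing assumption $p_+<+\infty$ is used, since it keeps $\rhopvar$ in the integral form on which monotone convergence (as $\lambda\downarrow\|x\|_{\lpvar}$) applies directly; absent that, one must instead argue with $\lambda_x+\varepsilon$ and $\lambda_y+\varepsilon$ and let $\varepsilon\downarrow0$. Everything else is a routine application of monotone convergence and Fatou's lemma once this point and the basic convexity/monotonicity of $\rhopvar$ are in place.
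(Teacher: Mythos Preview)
The paper does not prove this theorem at all; it is stated with a bare citation to \cite{LpvarBOOK} and no argument is given. Your proof is correct and is essentially the standard one found in that reference (and in any treatment of Musielak--Orlicz/Luxemburg norms): convexity and left-continuity of the modular yield the unit-ball property $\rhopvar(x/\|x\|_{\lpvar})\le1$, from which the triangle inequality follows by writing $(x+y)/(\lambda_x+\lambda_y)$ as a convex combination; completeness then comes from the absolutely-summable-series criterion together with monotone convergence applied to the modular. There is nothing to compare here beyond noting that you have supplied the details the paper chose to delegate to the literature.

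One small remark on your closing comment: the assumption $p_+<+\infty$ is not actually needed for the unit-ball property or for any of the norm axioms in your argument. Monotone convergence applies to the integrand $|x(t)/\lambda|^{p(t)}$ as $\lambda\downarrow\lambda_0$ regardless of whether $p_+$ is finite, since the functions are non-negative. Where $p_+<+\infty$ genuinely matters in the paper is elsewhere (e.g.\ the norm--modular comparisons of Proposition~\ref{prop_norma_modular_radici} and Lemma~\ref{normamodular2}); for the present theorem your proof goes through verbatim even without it, and your suggested $\varepsilon$-workaround is unnecessary.
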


It is interesting to point out that if $p(\cdot)\equiv p\in(1,+\infty)$,  the classical notion of norm $\|x\|_p$ in $\lp$ can be retrieved. Indeed, $\rhopvar\Big(\frac{x}{\lambda}\Big)=\rhop\Big(\frac{x}{\lambda}\Big)=\frac{1}{\lambda^p}\rhop(x)= \frac{1}{\lambda^p}\|x\|_p^p$
so that the infimum in \eqref{eq:Lux_norm} is equal to $\|x\|_p$. We remark that the computation of the $p$-radical of the integral is necessary to ensure the homogeneity property $\|\alpha x\|_p=|\alpha| \|x\|_p$, for any $\alpha \in \mathbb C$. With a variable exponent, such computation is obviously not possible and in turn the one-dimensional (1D) minimization problem \eqref{eq:Lux_norm} has to be solved. However, the quantity \eqref{eq:Lux_norm} can be bounded by the $p_-$ and $p_+$ radicals  of the modular. At a certain extent, one can thus think of the norm as $\Tilde{p}$-radical (which depends on $x$) of the modular with $p_-\le\Tilde{p}\le p_+$, as stated by the following lemma.
\begin{lemma}\cite[Lemma 3.2.5, Lemma 3.4.2]{LpvarBOOK}
Let $p(\cdot)\in\mathcal{P}(\Omega)$ with $p_+<+\infty$. 
\begin{enumerate}[(i)]
    \item If $\|x\|\pnorm>1$, then $\rhopvar(x)^{1/p_+}\le\|x\|\pnorm\le\rhopvar(x)^{1/p_-}.$ \label{eq:normamaggioredi1}
    \item If $0<\|x\|\pnorm\le1$, then $\rhopvar(x)^{1/p_-}\le\|x\|\pnorm\le\rhopvar(x)^{1/p_+}.$ \label{eq:normaminoredi1}
    \item $\|x\|\pnorm<1$ and $\rhopvar(x)< 1$ are equivalent.
    \label{normamodular2_i}
    \item $\|x\|\pnorm=1$ and $\rhopvar(x) = 1$ are equivalent.
    \label{normamodular2_ii}
\end{enumerate}
\label{normamodular2}
\end{lemma}
If $1<p_-\le p_+<+\infty$, then the following useful properties hold true.
\begin{theorem}\cite[Theorem 3.4.7, Theorem 3.4.9]{LpvarBOOK} \cite[Lemma 1]{DincaMatei2009lungo} \label{lpvar:reflexivity}
Given $p(\cdot)$ such that $1<p_-\le p_+<+\infty$, then $\lpvar$ is reflexive, uniformly convex and smooth.
\end{theorem}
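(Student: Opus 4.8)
The plan is to prove uniform convexity of the Luxemburg norm $\|\cdot\|\pnorm$ directly, and then deduce reflexivity from the Milman--Pettis theorem (a uniformly convex Banach space is reflexive); alternatively, reflexivity also follows from the duality $(\lpvar)^*=\lqvar$, valid since $p_+<+\infty$, combined with $(p'(\cdot))'=p(\cdot)$ and $(p'(\cdot))_+=(p_-)'<+\infty$ (using $p_->1$), so that the bidual closes. Thus the real content is the uniform convexity estimate, which I would obtain at the level of the modular $\rhopvar$ and then transport to the norm via the norm--modular comparisons of Proposition~\ref{prop_norma_modular_radici} and Lemma~\ref{normamodular2}.

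The first ingredient is a scale-invariant pointwise uniform convexity of $s\mapsto|s|^q$, uniform for $q$ in the compact range $[p_-,p_+]\subset(1,+\infty)$: for every $\varepsilon\in(0,1)$ there is $\delta=\delta(\varepsilon,p_-,p_+)>0$ such that, for all $a,b\in\R$ with $|a-b|\ge\varepsilon(|a|+|b|)$ and all $q\in[p_-,p_+]$,
\[
\Big|\tfrac{a+b}{2}\Big|^{q}\ \le\ (1-\delta)\,\tfrac{|a|^{q}+|b|^{q}}{2}.
\]
This is proved by a compactness argument: the inequality is positively homogeneous of degree $q$, so one normalises $|a|+|b|=1$; on that compact set the constraint $|a-b|\ge\varepsilon$ stays away from the diagonal $a=b$, where strict convexity of $|s|^q$ would give equality, so joint continuity in $(a,b,q)$ and compactness of $[p_-,p_+]$ extract a uniform $\delta$. (Plain convexity gives the same inequality with $\delta=0$ and no constraint on $a,b$.)

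Now take $x,y\in\lpvar$ with $\|x\|\pnorm,\|y\|\pnorm\le1$ and $\|x-y\|\pnorm\ge\varepsilon$. By Lemma~\ref{normamodular2} and Proposition~\ref{normamodular} one has $\rhopvar(x),\rhopvar(y)\le1$, while Proposition~\ref{prop_norma_modular_radici} yields $\rhopvar(x-y)\ge\varepsilon^{p_+}$. Split $\Omega=A\cup A^{c}$ with $A=\{t:|x(t)-y(t)|\ge\varepsilon'(|x(t)|+|y(t)|)\}$ for a small parameter $\varepsilon'$. On $A^{c}$ a crude estimate bounds $\int_{A^{c}}|x-y|^{p(t)}\diff t$ by $C(\varepsilon')\big(\rhopvar(x)+\rhopvar(y)\big)$ with $C(\varepsilon')\to0$; choosing $\varepsilon'$ so small that this is at most $\tfrac12\rhopvar(x-y)$ forces $\int_{A}|x-y|^{p(t)}\diff t\ge\tfrac12\varepsilon^{p_+}$, hence (comparing $|x-y|^{p(t)}$ with $|x|^{p(t)}+|y|^{p(t)}$) $\int_{A}(|x|^{p(t)}+|y|^{p(t)})\diff t\ge c(\varepsilon)>0$. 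Applying the pointwise inequality with $q=p(t)$ on $A$ and plain convexity on $A^{c}$, then integrating, gives
\[
\rhopvar\!\Big(\tfrac{x+y}{2}\Big)\ \le\ \tfrac12\big(\rhopvar(x)+\rhopvar(y)\big)-\tfrac{\delta(\varepsilon')}{2}\!\int_{A}\!\big(|x|^{p(t)}+|y|^{p(t)}\big)\diff t\ \le\ 1-\delta',
\]
with $\delta'=\tfrac12\delta(\varepsilon')c(\varepsilon)>0$ depending only on $\varepsilon,p_-,p_+$. Since $\rhopvar(\tfrac{x+y}{2})<1$, Lemma~\ref{normamodular2} gives $\|\tfrac{x+y}{2}\|\pnorm<1$, and then Proposition~\ref{prop_norma_modular_radici}(ii) gives $\|\tfrac{x+y}{2}\|\pnorm\le\rhopvar(\tfrac{x+y}{2})^{1/p_+}\le(1-\delta')^{1/p_+}=:1-\delta$ with $\delta>0$ independent of $x,y$. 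This is the uniform convexity of $(\lpvar,\|\cdot\|\pnorm)$, and reflexivity follows as above.

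The main obstacle is producing the pointwise estimate in its \emph{scale-invariant} form with a constant uniform over the whole exponent range: a naive ``modulus of uniform convexity of $|s|^q$'' does not exist for $q<2$ (its second derivative vanishes at infinity), so the convexity gain must be measured relatively to $|a|^q+|b|^q$ and the hypothesis written as $|a-b|\ge\varepsilon(|a|+|b|)$; once that is secured, what remains is the bookkeeping of the splitting $\Omega=A\cup A^c$, whose role is exactly to certify, uniformly over the unit ball, that the set where the convexity gain is realised carries a definite fraction of the modular mass.
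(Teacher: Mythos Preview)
The paper does not give its own proof of this statement; it is quoted as a background result from \cite[Theorems~3.4.7 and~3.4.9]{LpvarBOOK}, so there is nothing in the paper to compare your argument against.

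That said, your outline is correct and is essentially the classical route taken in the cited reference: one proves a scale-invariant pointwise uniform convexity inequality for $s\mapsto|s|^{q}$ uniform in $q\in[p_-,p_+]$ (this is where the hypotheses $p_->1$ and $p_+<\infty$ are used, via compactness of the exponent range), integrates it to obtain uniform convexity of the modular, and then transfers the estimate to the Luxemburg norm through the norm--modular inequalities of Proposition~\ref{prop_norma_modular_radici} and Lemma~\ref{normamodular2}. Reflexivity then follows from Milman--Pettis (or, as you note, from the duality $(\lpvar)^*\cong\lqvar$ together with $(p')_+<\infty$). Your bookkeeping on the splitting $\Omega=A\cup A^{c}$ is accurate: on $A^{c}$ the bound $|x-y|^{p(t)}\le(\varepsilon')^{p_-}2^{p_+-1}\big(|x|^{p(t)}+|y|^{p(t)}\big)$ lets you make the contribution small, forcing a definite mass of $|x|^{p(t)}+|y|^{p(t)}$ onto $A$ where the strict convexity gain $\delta(\varepsilon')$ applies. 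The only cosmetic point is that uniform convexity is usually stated for $\|x\|=\|y\|=1$; your version with $\le 1$ is slightly stronger and the same argument works.
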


\subsection{Dual space, duality mappings, separability}

In $\lp$ spaces, the isometric isomorphism between the space $\Big ( \lp \Big )^*$ and $L^{p'}(\Omega)$, with $p'$ being the H\"{o}lder conjugate of $p$, has a key role in devising regularization algorithms \cite{Schuster2012}. In $\lpvar$ spaces, such isomorphism does not hold true, as we briefly summarize in this subsection. For a comprehensive description of these arguments, see \cite{LpvarBOOK}.

\begin{definition}
Let $G:\lpvar\longrightarrow\R$ be a linear functional. $G$ is bounded if 
$
\sup_{u\in\lpvar,\|u\|\pnorm \le 1} |G(u)|<+\infty.
$
The dual space of $\lpvar$ can thus be defined as the set 
\[
    \lpvardual=\{G:\lpvar\longrightarrow\R \ : \ G \ \text{is linear and bounded}\} \,,
\]
which is a Banach space with the norm
\[
    \|G\|_\lpvardual:=\sup_{u\in\lpvar,\|u\|\pnorm \le 1} |G(u)|.
\]
\end{definition}
For $1<p_-\le p_+<+\infty$, the H{\"o}lder conjugate of $p(\cdot)$ is 
a Lebesgue measurable function $p'(\cdot)\in\mathcal{P}(\Omega)$ such that
$
\frac{1}{p(t)}+\frac{1}{p'(t)}=1 
$ a.e. in $\Omega$.
For any $z\in\lqvar$, it can be shown that there exists a unique $G\in\lpvardual$ such that 
\[
    G(u)=\int_\Omega z(t)u(t)\diff t \qquad \forall\ u\in\lpvar.
\]
Thus, we can denote unambiguously $G$ as $G_z$.
\begin{definition}\cite[Definition 2.7.1]{LpvarBOOK}
The associate space of $\lpvar$, denoted by $\lpvarspass$, is the space of functions $z\in\lqvar$ such that 
\begin{equation}
    \|z\|\spassnorm:=\sup_{u\in\lpvar,\|u\|\pnorm \le 1} ~\int_\Omega |z(t)||u(t)|\diff t<+\infty.
    \label{normaspazioass}
\end{equation}
\end{definition}
It can be proved that $\|\cdot\|\spassnorm$ is a norm on $\lpvarspass$. Note that since
\begin{equation}
\begin{aligned}
\|G_z\|_\lpvardual&=\sup_{\|u\|\pnorm \le 1} |G_z(u)|=\sup_{\|u\|\pnorm \le 1}\int_\Omega |z(t)||u(t)|\diff t=\|z\|\spassnorm \,,
\end{aligned}
\label{eq:isom_ass}
\end{equation}
\eqref{normaspazioass} is finite if and only if the linear operator $G_z$ is bounded. Thus, there exists an isometric embedding between $\lpvarspass$ and $\lpvardual$.
\begin{prop}\cite[Corollary 3.2.14]{LpvarBOOK}
\label{prop:normaspassduale}
For all $z\in\lpvarspass$, there holds
\[
    \frac{1}{2}\|z\|\qnorm\le\|z\|\spassnorm\le 2\|z\|\qnorm\,,
\]
and the bounds are optimal.
\end{prop}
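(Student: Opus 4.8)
The statement is classical --- it is \cite[Corollary~3.2.14]{LpvarBOOK} --- and my plan is to prove the two inequalities separately, deriving for the lower one the slightly stronger bound $\|z\|\qnorm\le\|z\|\spassnorm$, from which $\tfrac12\|z\|\qnorm\le\|z\|\spassnorm$ follows trivially. For the upper inequality I would invoke the generalised H\"older inequality in variable exponent spaces, $\int_\Omega|u(t)z(t)|\,\diff t\le 2\,\|u\|\pnorm\,\|z\|\qnorm$ for $u\in\lpvar$ and $z\in\lqvar$ (with a constant that is $\le 2$ because $1<p_-\le p_+<+\infty$); restricting to $u$ with $\|u\|\pnorm\le 1$ and taking the supremum in \eqref{normaspazioass} then gives $\|z\|\spassnorm\le 2\|z\|\qnorm$ at once.

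For the lower bound I would construct an admissible test function saturating \eqref{normaspazioass}. Assume $z\neq 0$, set $c:=\|z\|\qnorm\in(0,+\infty)$ (finite since $z\in\lqvar$) and $w:=z/c$, so that $\|w\|\qnorm=1$ and hence $\rhoqvar(w)=1$ by Lemma~\ref{normamodular2}(ii), which applies because $p'_+<+\infty$. Take the conjugate function $v:=|w|^{p'(\cdot)-1}$, which is measurable and nonnegative. The conjugacy relation $\tfrac1{p(t)}+\tfrac1{p'(t)}=1$ yields a.e. the identity $(p'(t)-1)p(t)=p'(t)$, so that $\rhopvar(v)=\int_\Omega|w(t)|^{(p'(t)-1)p(t)}\,\diff t=\int_\Omega|w(t)|^{p'(t)}\,\diff t=\rhoqvar(w)=1$, and therefore $\|v\|\pnorm=1$ by Lemma~\ref{normamodular2}(ii) again; in particular $v$ is admissible in \eqref{normaspazioass}. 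Since moreover $\int_\Omega|w(t)||v(t)|\,\diff t=\int_\Omega|w(t)|^{p'(t)}\,\diff t=1$, we obtain $\|w\|\spassnorm\ge 1$, and by homogeneity of $\|\cdot\|\spassnorm$, $\|z\|\spassnorm=c\,\|w\|\spassnorm\ge c=\|z\|\qnorm$.

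It remains to discuss optimality. The lower estimate is in fact an equality when $p(\cdot)$ is constant (the classical $L^p$--$L^{p'}$ duality), while the constant $2$ in the upper estimate cannot be lowered: one verifies this by taking $p(\cdot)$ piecewise constant with two values $s\downarrow 1$ and $t\uparrow+\infty$ on two subsets of $\Omega$ of suitably balanced measure together with a two-valued $z$, which forces $\|z\|\spassnorm/\|z\|\qnorm\to 2$; see \cite{LpvarBOOK}. I expect this extremal construction to be the only genuinely delicate point: the two inequalities themselves are immediate once the generalised H\"older inequality and the norm--modular unit-ball equivalences of Lemma~\ref{normamodular2} are available, whereas tuning the measures of the level sets of $p(\cdot)$ and the values of $z$ so as to approach the bounds requires more care.
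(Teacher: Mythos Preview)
The paper does not supply a proof of this proposition; it is simply quoted from \cite[Corollary~3.2.14]{LpvarBOOK}. Your argument for both inequalities is correct and self-contained: the upper bound is exactly the generalised H\"older inequality, and for the lower bound your test function $v=|w|^{p'(\cdot)-1}$ together with the unit-ball equivalence of Lemma~\ref{normamodular2} yields the sharper estimate $\|z\|\qnorm\le\|z\|\spassnorm$, from which the stated inequality with constant $\tfrac12$ follows a fortiori.

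One point worth flagging: your own computation shows that under the paper's standing hypothesis $1<p_-\le p_+<+\infty$ the lower constant $\tfrac12$ is \emph{not} optimal, since $1$ already works (and is attained for constant exponents, as you note). The optimality claim in the cited corollary refers to the fully general exponent range, where $p(\cdot)$ may hit $1$ or $+\infty$ and Lemma~\ref{normamodular2} is unavailable; your deferral to \cite{LpvarBOOK} for the extremal construction is therefore appropriate, but you should make explicit that the sharpness of $\tfrac12$ lives outside the regime assumed here. The sharpness of the upper constant $2$, by contrast, survives in the sense that no smaller constant works uniformly over all admissible $p(\cdot)$ with $1<p_-\le p_+<+\infty$, since one can let $p_-\downarrow 1$ and $p_+\uparrow+\infty$.
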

\begin{oss}
From the proposition above combined with \eqref{eq:isom_ass}, it follows that $\lpvardual$ and $\lqvar$ are not isometrically isomorphic. 
\end{oss}
A key role in the definition of iterative schemes in Banach space $\mathcal{X}$ is played by duality maps, which associate an element of $\mathcal{X}$ with a specific element of its dual $\mathcal{X}^*$. 

\begin{definition}\cite{Cioranescu1990}
\label{def:map_dual}
Let $\mathcal{X}$ be a Banach space and $r>1$. Then the duality map $\JrX$ with gauge function $t\mapsto t^{r-1}$ is the operator $\JrX:\mathcal{X}\to 2^{\mathcal{X}^*}$ such that
$$
\JrX(x)=\big\{x^{*}\in \mathcal{X}^{*}\mid x^{*}(x)=\left<x^*,x \right>=\|x\|_\mathcal{X} \|x^{*}\|_{\mathcal{X}^{*}}, \, \|x^{*}\|_{\mathcal{X}^{*}}=\|x\|_\mathcal{X}^{r-1}\big\}\quad \forall x\in X.
$$
\end{definition}

If $\mathcal{X}$ is smooth, then the duality map is single valued, that is $\JrX:\mathcal{X}\to \mathcal{X}^*$. In addition, if $\mathcal{X}$ is a Hilbert space $\mathcal{H}$, by virtue of the Riesz theorem, the duality map reduces to the identity operator for $r=2$, that is  $\mathbf{J}_\mathcal{H}^\texttt{2}(x)=x$, where the isometric isomorphism between $\mathcal{H}$ and $\mathcal{H}^*$ has been implicitly considered.

In general, the following result gives a more intuitive interpretation of the important role played by duality mappings in the definition of minimization schemes.
\begin{theorem} [(Asplund) \cite{Cioranescu1990}]
\label{Theo:Asplund}
Let $\mathcal{X}$ be a Banach space, and let $r>1$. The $r$-duality map $\JrX$ is the subdifferential of the convex functional $h:\mathcal{X} \longrightarrow \mathbb R$, $h(x)=\frac{1}{r}\|x\|_\mathcal{X}^r$:
$$ \JrX = \partial h = \partial \left ( \frac{1}{r}\| \cdot \|_\mathcal{X}^r \right )\,.$$
\end{theorem}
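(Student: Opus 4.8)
The plan is to establish the equality of the two (a priori set-valued) maps $\JrX$ and $\partial h$, with $h(x)=\frac{1}{r}\|x\|_\mathcal{X}^r$, by proving the two inclusions $\JrX(x)\subseteq\partial h(x)$ and $\partial h(x)\subseteq\JrX(x)$ for every $x\in\mathcal{X}$. Note first that $h$ is convex, being the composition of the nondecreasing convex function $t\mapsto\frac{1}{r}t^r$ on $[0,+\infty)$ with the norm, so its subdifferential in the usual sense is well defined. Throughout I write $r'$ for the conjugate exponent, $\frac{1}{r}+\frac{1}{r'}=1$, and I use repeatedly Young's inequality $ab\le\frac{a^r}{r}+\frac{b^{r'}}{r'}$ together with the identity $(r-1)r'=r$.

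For $\JrX(x)\subseteq\partial h(x)$: fix $x^*\in\JrX(x)$, so that $\langle x^*,x\rangle=\|x\|_\mathcal{X}\|x^*\|_{\mathcal{X}^*}$ and $\|x^*\|_{\mathcal{X}^*}=\|x\|_\mathcal{X}^{r-1}$. For any $y\in\mathcal{X}$, estimate $\langle x^*,y\rangle\le\|x^*\|_{\mathcal{X}^*}\|y\|_\mathcal{X}=\|x\|_\mathcal{X}^{r-1}\|y\|_\mathcal{X}$ and apply Young's inequality to get $\langle x^*,y\rangle\le\frac{1}{r'}\|x\|_\mathcal{X}^{(r-1)r'}+\frac{1}{r}\|y\|_\mathcal{X}^r=\frac{1}{r'}\|x\|_\mathcal{X}^r+\frac{1}{r}\|y\|_\mathcal{X}^r$. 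Subtracting $\langle x^*,x\rangle=\|x\|_\mathcal{X}^r$ from both sides and rearranging yields $h(y)-h(x)\ge\langle x^*,y-x\rangle$, i.e.\ $x^*\in\partial h(x)$.

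For the reverse inclusion $\partial h(x)\subseteq\JrX(x)$: fix $x^*\in\partial h(x)$, so that $\frac{1}{r}\|y\|_\mathcal{X}^r\ge\frac{1}{r}\|x\|_\mathcal{X}^r+\langle x^*,y-x\rangle$ for all $y$. Testing with $y=tx$, $t>0$, gives $(t-1)\langle x^*,x\rangle\le\frac{t^r-1}{r}\|x\|_\mathcal{X}^r$; dividing by $t-1$ and letting $t\to1^+$ and then $t\to1^-$ (so that the sign of $t-1$ flips), together with $\lim_{t\to1}\frac{t^r-1}{t-1}=r$, forces $\langle x^*,x\rangle=\|x\|_\mathcal{X}^r$. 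Substituting this back, the subgradient inequality becomes $\langle x^*,y\rangle\le\frac{1}{r}\|y\|_\mathcal{X}^r+\frac{1}{r'}\|x\|_\mathcal{X}^r$ for all $y$. Replacing $y$ by $\lambda y$ with $\|y\|_\mathcal{X}=1$, $\lambda>0$, and dividing by $\lambda$ gives $\langle x^*,y\rangle\le\frac{\lambda^{r-1}}{r}+\frac{\|x\|_\mathcal{X}^r}{r'\lambda}$; minimising the right-hand side over $\lambda>0$, the optimal choice is $\lambda=\|x\|_\mathcal{X}$ (using $(r-1)r'=r$), which yields $\langle x^*,y\rangle\le\|x\|_\mathcal{X}^{r-1}$, hence $\|x^*\|_{\mathcal{X}^*}\le\|x\|_\mathcal{X}^{r-1}$ after taking the supremum over the unit sphere. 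Conversely $\|x\|_\mathcal{X}^r=\langle x^*,x\rangle\le\|x^*\|_{\mathcal{X}^*}\|x\|_\mathcal{X}$ gives $\|x^*\|_{\mathcal{X}^*}\ge\|x\|_\mathcal{X}^{r-1}$ when $x\ne0$, so $\|x^*\|_{\mathcal{X}^*}=\|x\|_\mathcal{X}^{r-1}$, and combined with $\langle x^*,x\rangle=\|x\|_\mathcal{X}^r=\|x\|_\mathcal{X}\|x^*\|_{\mathcal{X}^*}$ this shows $x^*\in\JrX(x)$. The degenerate case $x=0$ is immediate: the subgradient inequality reduces to $\langle x^*,y\rangle\le\frac{1}{r}\|y\|_\mathcal{X}^r$ for all $y$, and the same scaling argument (letting $\lambda\to0^+$) forces $x^*=0\in\JrX(0)$.

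The only genuinely delicate point is the reverse inclusion, and specifically the extraction of the norm identity $\|x^*\|_{\mathcal{X}^*}=\|x\|_\mathcal{X}^{r-1}$ from the subgradient inequality: this is where the homogeneity of the problem has to be exploited, through the two one-parameter test families $y=tx$ and $y=\lambda y_0$ with $\|y_0\|_\mathcal{X}=1$, followed by an elementary one-dimensional minimisation. Everything else is a direct manipulation of Young's inequality. It is worth noting that the argument uses neither reflexivity nor smoothness nor strict convexity of $\mathcal{X}$, nor the nonemptiness of $\partial h(x)$, so the identity $\JrX=\partial h$ holds in full generality as an equality of set-valued maps.
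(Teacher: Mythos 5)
Your proof is correct and complete. The paper does not prove this statement at all: it is quoted as a classical result (Asplund's theorem) with a citation to Cioranescu's monograph, so there is no in-paper argument to compare against. Your two-inclusion strategy is the standard textbook proof: the inclusion $\JrX(x)\subseteq\partial h(x)$ via Young's inequality, and the reverse inclusion by first extracting $\langle x^*,x\rangle=\|x\|_\mathcal{X}^r$ from the radial test family $y=tx$ and then the norm identity $\|x^*\|_{\mathcal{X}^*}=\|x\|_\mathcal{X}^{r-1}$ from the scaling family $y=\lambda y_0$ with the one-dimensional minimisation at $\lambda=\|x\|_\mathcal{X}$. All the computations check out, including the treatment of $x=0$, and your closing observation is accurate: the identity holds as an equality of set-valued maps in an arbitrary Banach space, with no smoothness, strict convexity, or reflexivity assumptions — which is consistent with the paper only invoking single-valuedness of $\JrX$ later, under the additional smoothness hypothesis.
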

Thanks to this Theorem, the duality maps are thus monotone operators.

In \cite{DincaMatei2009corto,DincaMatei2009lungo} the authors proved that  $\|\cdot\|\pnorm$ is Gateaux-differentiable, providing an analytical expression for its Gateau derivative, and in \cite{Matei2012,Matei2014} that it is Fréchet differentiable too, for any $x\not =0$. From these results, it follows that $\frac{1}{r}\|\cdot\|\pnorm^r$ for $r>1$ is Fréchet differentiable for any $x \in \mathcal{X}$, hence its Gateaux derivative, the $r$-duality map $\mathbf{J}_{\lpvarb}^r$, can be obtained following arguments similar to those of \cite{DincaMatei2009corto,DincaMatei2009lungo}. 
\begin{prop} 
For each $x,u\in\lpvar$ and for any $r\in(1,+\infty)$, the duality mapping $\mathbf{J}_{\lpvarb}^r:\lpvar\longrightarrow\lpvardual$ is a linear operator with expression:
\begin{equation}
    \langle\mathbf{J}_{\lpvarb}^r(x),u\rangle=\frac{1}{\int_\Omega \frac{p(t)|x(t)|^{p(t)}}{\|x\|\pnorm^{p(t)}}\diff t } \int_\Omega \frac{p(t)\sign\big(x(t)\big) |x(t)|^{p(t)-1}}{\|x\|\pnorm^{p(t)-r}}u(t)\diff t.
\label{def:dual_map_lpvar}
\end{equation}
\end{prop}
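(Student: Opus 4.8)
The plan is to combine Asplund's theorem (Theorem~\ref{Theo:Asplund}) with the known differentiability of the Luxemburg norm and a direct implicit differentiation of its defining relation. First, note that $h_r:=\frac{1}{r}\|\cdot\|\pnorm^{r}$ is Fréchet differentiable on all of $\lpvar$: away from the origin this follows from the Fréchet differentiability of $\|\cdot\|\pnorm$ established in \cite{Matei2012,Matei2014} together with the chain rule, while at $x=0$ one has $h_r(0)=0$ and $h_r(su)=\frac{|s|^{r}}{r}\|u\|\pnorm^{r}=o(|s|)$ as $s\to0$, so $h_r'(0)=0$ and, since duality maps send $0$ to $0$, the formula holds trivially there. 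Consequently $\partial h_r$ is single valued and equals the Gateaux derivative $h_r'$, which by Theorem~\ref{Theo:Asplund} is exactly $\mathbf{J}_{\lpvarb}^{r}$. Fixing $x\ne 0$ and a direction $u\in\lpvar$, the chain rule gives $\langle\mathbf{J}_{\lpvarb}^{r}(x),u\rangle=\|x\|\pnorm^{r-1}\,\delta N(x)[u]$, where $N:=\|\cdot\|\pnorm$ and $\delta N(x)[u]$ denotes its directional derivative at $x$ along $u$; it thus remains to compute $\delta N(x)[u]$.

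Next I would use the implicit characterisation of the norm. Because $p_+<+\infty$, for every $x\ne 0$ the number $\lambda(x):=\|x\|\pnorm$ is the unique positive root of
\begin{equation*}
\Phi(x,\lambda):=\rhopvar\!\left(\frac{x}{\lambda}\right)-1=\int_\Omega\frac{|x(t)|^{p(t)}}{\lambda^{p(t)}}\diff t-1=0 :
\end{equation*}
indeed $\|x/\lambda(x)\|\pnorm=1$ forces $\rhopvar(x/\lambda(x))=1$ by Lemma~\ref{normamodular2}(\ref{normamodular2_ii}), while $\lambda\mapsto\rhopvar(x/\lambda)$ is continuous and strictly decreasing on $(0,+\infty)$ (by dominated convergence, and since $|x|^{p(\cdot)}$ is positive on a set of positive measure). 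Differentiating $\Phi(x,\lambda(x))=0$ in the direction $u$ — differentiation under the integral sign being justified in a neighbourhood of $(x,\lambda(x))$ by $p_+<+\infty$, $x,u\in\lpvar$ and the generalised Hölder inequality applied to the partial-derivative integrands — yields
\begin{equation*}
\int_\Omega\frac{p(t)\,\sign(x(t))\,|x(t)|^{p(t)-1}}{\lambda(x)^{p(t)}}\,u(t)\diff t-\frac{\delta N(x)[u]}{\lambda(x)}\int_\Omega\frac{p(t)\,|x(t)|^{p(t)}}{\lambda(x)^{p(t)}}\diff t=0 .
\end{equation*}
The denominator integral is finite (again $p_+<+\infty$) and strictly positive (since $x\ne0$), so this relation can be solved for $\delta N(x)[u]$.

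Finally, substituting the resulting expression for $\delta N(x)[u]$ into $\langle\mathbf{J}_{\lpvarb}^{r}(x),u\rangle=\lambda(x)^{r-1}\delta N(x)[u]$ and gathering the powers of $\lambda(x)=\|x\|\pnorm$ — a global factor $\lambda(x)^{r-1}\cdot\lambda(x)=\lambda(x)^{r}$ combining with the pointwise $\lambda(x)^{-p(t)}$ to give $\lambda(x)^{r-p(t)}=\|x\|\pnorm^{-(p(t)-r)}$ inside the integral — reproduces exactly \eqref{def:dual_map_lpvar}; that $u\mapsto\langle\mathbf{J}_{\lpvarb}^{r}(x),u\rangle$ is linear and bounded, hence a bona fide element of $\lpvardual$, is then immediate from the closed form and is in any case already ensured by Asplund's theorem. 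I expect the main obstacle to be the rigorous justification of the implicit differentiation: one must establish that $\lambda(\cdot)$ is Gateaux differentiable with the derivative predicted by the implicit function theorem — equivalently, that $N=\|\cdot\|\pnorm$ admits the stated integral representation of its directional derivative — and here the non-separability of the Luxemburg norm prevents any term-by-term reduction, so one must lean on, or carefully reprove following \cite{DincaMatei2009corto,DincaMatei2009lungo}, the Gateaux differentiability of $\|\cdot\|\pnorm$ away from the origin.
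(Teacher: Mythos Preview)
Your proposal is correct and follows precisely the approach the paper indicates: the paper does not spell out a proof but refers to the Gateaux differentiability of $\|\cdot\|\pnorm$ from \cite{DincaMatei2009corto,DincaMatei2009lungo} and states that the formula is obtained ``by simply computing the Gateaux derivative of $\frac{1}{r}\|\cdot\|\pnorm^r$ following similar arguments'', which is exactly the chain rule plus implicit differentiation of $\rhopvar(x/\lambda(x))=1$ that you carry out. Your caveat about the rigorous justification of the implicit differentiation is apt---that is the substance of the Dinca--Matei argument, and the paper likewise offloads it to those references.
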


If $p(\cdot)\equiv p$ is constant, then $\mathbf{J}_{\lpvarb}^r$ coincides with $\mathbf{J}_p^r$, duality map of $L^p(\Omega)$:
\[
    \langle\mathbf{J}_p^r(x),u\rangle=\|x\|_p^{r-p} \int_\Omega \sign\big(x(t)\big)|x(t)|^{p-1}u(t)\diff t.
\]
\begin{prop}
\label{prop_jrho_jrhobar}
For any $x, u\in\lpvar$, with $p_+<+\infty$, the functions $\rhopvar(\cdot)$ and $\rhobarpvar(\cdot)$ are Gateaux differentiable, with derivatives 
\begin{equation}
\langle\jrho(x),u\rangle=\int_\Omega p(t)\sign(x(t))|x(t)|^{p(t)-1}u(t)\diff t
\end{equation}
\begin{equation}
\langle\jrhobar(x),u\rangle=\int_\Omega \sign(x(t))|x(t)|^{p(t)-1}u(t)\diff t.
\end{equation}
\end{prop}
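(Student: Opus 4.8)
The plan is to compute the Gateaux derivative of each modular directly from its integral definition, differentiating under the integral sign. Fix $x,u\in\lpvar$ and consider the scalar function $\varphi(s):=\rhopvar(x+su)=\int_\Omega |x(t)+su(t)|^{p(t)}\,\diff t$ for $s$ in a neighbourhood of $0$. First I would observe that the integrand $s\mapsto |x(t)+su(t)|^{p(t)}$ is differentiable in $s$ for a.e.\ $t$ (the only issue is at points where $x(t)+su(t)=0$, but since $1<p_-\le p(t)$, the map $v\mapsto|v|^{p(t)}$ is continuously differentiable on $\R$ with derivative $p(t)\,\sign(v)\,|v|^{p(t)-1}$, which vanishes at $v=0$). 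Thus, at least formally,
\begin{equation*}
\frac{\diff}{\diff s}\Big|_{s=0}\!\!|x(t)+su(t)|^{p(t)} = p(t)\,\sign(x(t))\,|x(t)|^{p(t)-1}u(t),
\end{equation*}
and the claimed expression for $\langle\jrho(x),u\rangle$ is exactly $\varphi'(0)$. The analogous computation with the extra weight $1/p(t)$ gives the formula for $\langle\jrhobar(x),u\rangle$, the factor $p(t)$ cancelling.

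The key step is to justify the interchange of derivative and integral, i.e.\ to produce an integrable dominating function for the difference quotients $\frac{1}{s}\big(|x(t)+su(t)|^{p(t)}-|x(t)|^{p(t)}\big)$ uniformly for small $s$, so that the dominated convergence theorem applies. By the mean value theorem this quotient equals $p(t)\,\sign(x(t)+\xi_t u(t))\,|x(t)+\xi_t u(t)|^{p(t)-1}u(t)$ for some $\xi_t$ between $0$ and $s$; for $|s|\le 1$ this is bounded in absolute value by $p_+\,(|x(t)|+|u(t)|)^{p(t)-1}|u(t)|$. I would then bound $(|x(t)|+|u(t)|)^{p(t)-1}$ using the elementary inequality $a^{p(t)-1}\le 1+a^{p_+}$ (valid for $a\ge0$ since $p(t)-1\le p_+$), reducing the problem to integrability of $|u|$, of $|x|^{p_+}|u|$ and $|u|^{p_+}|u|=|u|^{p_+ +1}$ over $\Omega$. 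Here is where one must be a little careful: on an unbounded domain $|u|$ alone need not be integrable. A cleaner route is to use Young's inequality on $(|x(t)|+|u(t)|)^{p(t)-1}|u(t)|$ with exponents $p'(t)$ and $p(t)$, yielding a bound by $\tfrac{1}{p'(t)}(|x(t)|+|u(t)|)^{p(t)} + \tfrac{1}{p(t)}|u(t)|^{p(t)}\le (|x(t)|+|u(t)|)^{p(t)}+|u(t)|^{p(t)}$, which is integrable because $x,u\in\lpvar$ implies $x+u\in\lpvar$ and hence $\rhopvar(|x|+|u|)<+\infty$ (using $\rhopvar(x)<+\infty\iff\|x\|\pnorm<+\infty$ from Proposition \ref{prop_norma_modular_radici}, together with the triangle inequality for $\|\cdot\|\pnorm$ and monotonicity of the modular in the integrand). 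This supplies the required dominating function.

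With the dominated convergence theorem in hand, $\varphi'(0)$ equals the integral of the pointwise derivative, which is the asserted formula; linearity of $u\mapsto\langle\jrho(x),u\rangle$ is immediate from linearity of the integral. The same argument with the weight $1/p(t)\le 1/p_-$ handles $\rhobarpvar$ verbatim. The main obstacle is thus purely the domination/integrability bookkeeping on a possibly infinite-measure $\Omega$ with unbounded exponent ratio $p_+/p_-$; once the Young-inequality bound above is set up, everything else is routine.
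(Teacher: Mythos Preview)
Your proposal is correct and follows essentially the same route as the paper: differentiate under the integral sign and justify via dominated convergence, using that $|x|+|u|\in\lpvar$ so $\rhopvar(|x|+|u|)<\infty$. The only cosmetic difference is in producing the dominating function: the paper simply bounds $|u(t)|\le |x(t)|+|u(t)|$ to get directly $p_+(|x(t)|+|u(t)|)^{p(t)}$, whereas you go through Young's inequality to obtain $(|x(t)|+|u(t)|)^{p(t)}+|u(t)|^{p(t)}$; both bounds work and the remaining bookkeeping is identical.
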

\begin{proof}
Let $x,u\in\lpvar$. The Gateaux derivative of $\rhopvar$ at $x$ along direction $u$ is given by
\begin{equation*}
    \lim_{t\to 0}~ \frac{\rhopvar(x+tu)-\rhopvar(x)}{t}=\Big[\frac{\partial}{\partial t}\rhopvar(x+tu)\Big]_{t=0}.
\end{equation*}
First, note that
$\rhopvar(x+tu)=\int_\Omega |x(s)+tu(s)|^{p(s)}\diff s<+\infty.
$
Indeed $x,u\in\lpvar$, then $x+tu\in\lpvar$ and consequently $\|x+tu\|\pnorm<+\infty$ by definition of $\lpvar$ space. Since $p_+<+\infty$ and by Lemma \ref{normamodular2}, $\|x+tu\|\pnorm<+\infty$ implies $\rhopvar(x+tu)<+\infty$. We can thus compute the partial derivative of $\rhopvar(x+tu)$ with respect to $t$ by ``differentiating under the integral sign.'' To do so, we first need to verify the regularity of the integrand function. Let $f:\Omega\times(-1,1)\longrightarrow\R$ be defined by
\begin{equation*}
    f(s,t):=|x(s)+tu(s)|^{p(s)},\quad s\in\Omega,\quad t\in(-1,1).
\end{equation*}
By direct computations, we obtain 
\begin{equation*}
    \frac{\partial f}{\partial t}(s,t)=p(s)|x(s)+tu(s)|^{p(s)-1}\sign\Big(x(s)+tu(s)\Big)u(s)
\end{equation*}
and, since $|t|<1$,
\begin{equation*}
\begin{aligned}
\Big|\frac{\partial f}{\partial t}(s,t)\Big|& \le p_+ |x(s)+tu(s)|^{p(s)-1} |u(s)|  \le p_+ |x(s)+tu(s)|^{p(s)-1} (|u(s)|+|x(s)|)\\
& \le p_+(|u(s)|+|x(s)|)^{p(s)}=:g(s),
\end{aligned}
\end{equation*}
with $g(s)$ integrable. We conclude the proof thanks to the dominated convergence theorem.
\end{proof}
\begin{oss}
We stress that although $\jrho$ and $\jrhobar$ are not duality mappings, we adopt nonetheless the same notation for consistency.
\end{oss}
It is interesting to observe the following property of the modular function and its gradient. By direct computations, it follows that for any $x\in\lpvar$ there holds
\begin{equation}
    \langle\jrhobar(x),x\rangle=\int_\Omega \sign(x(t)) \, |x(t)|^{p(t)-1}x(t) \, \diff t=\rhopvar(x).
    \label{eq:Jandmodular}
\end{equation}
This is the analogue of a general property of  duality mappings in Banach spaces. Indeed, by Definition \ref{def:map_dual}, if $\mathcal{X}$ is a smooth Banach space, then for any $r>1$ and $x\in\mathcal{X}$ there holds $\langle\mathbf{J}_\mathcal{X}^r(x),x\rangle = \|x\|_\mathcal{X} \|x^*\|_{\mathcal{X}^*} = \|x\|_\mathcal{X} \|x\|_\mathcal{X}^{r-1} = \|x\|_\mathcal{X}^r.$

As better explained in Section \ref{sec:Applications}, it is handy having functionals and operators defined in Banach spaces that are \emph{separable}, that is, their global computation can be decomposed into the sum of low-dimensional functionals. To make this property more precise, we consider the following definition of domain additive separability.

\begin{definition} 
\label{def:add_sep}
Let $\mathcal{X}$ be a functional Banach space on $\Omega$. An operator $S:\mathcal{X} \rightarrow \mathcal{X}^*$ or a functional $S:\mathcal{X} \rightarrow \mathbb R$ is domain additively separable, if, for any finite family of Lebesgue measurable subsets $\Big (\Omega_i \Big)_{i=1}^{n}$ of $\Omega$ such that $\mathring{\Omega}_i\cap\mathring{\Omega}_j=\emptyset$ for $i\not=j$, and $\Omega=\bigcup_{i=1}^n{\Omega_i}$, there holds 
$
S(x)=\sum_{i=1}^{n} S\left(\chi_i \, x\right)
$
for any $x \in \mathcal{X}$, where $\chi_i\in \mathcal{X}$ is the characteristic function of $\Omega_i$, that is $\chi_i(t)=1$ for  $t\in\Omega_i$, and 
$\chi_i(t)=0$ for $t\not\in\Omega_i$.
\end{definition}

In the following, domain additive separability will often be referred to simply as separability. It is quite evident that in conventional $\lp$ spaces, the norm functional $\|\cdot\|_p^p$, as well as the operator $\mathbf{J}_p^p(\cdot)$, are domain additively separable, since, for any suitable family of subsets $\Big (\Omega_i \Big)_{i=1}^{n}$, there holds $\|x\|_p^p=\sum_{i=1}^{n} \| \chi_i x\|_p^p$ and $\langle\mathbf{J}_p^p(x),u\rangle=\sum_{i=1}^{n} \langle\mathbf{J}_p^p(\chi_i x),u\rangle=\langle \sum_{i=1}^{n} \mathbf{J}_p^p(\chi_i \, x),u\rangle$.
On the contrary, norms and duality maps in variable exponent spaces are not separable.

\begin{lemma}
The norm and the duality mapping in $\lpvar$ are not domain additively separable in the sense of Definition \ref{def:add_sep}.
\label{lemma:non_sep}
\end{lemma}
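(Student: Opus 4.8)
The plan is to disprove separability of both objects at once, by means of a single explicit counterexample set in a genuinely variable-exponent situation. Take $\Omega=[0,2]$, put $p(t)=2$ on $\Omega_1:=[0,1]$ and $p(t)=4$ on $\Omega_2:=[1,2]$ (so $1<p_-=2\le p_+=4<+\infty$), and let $x\equiv 1$. The pair $\{\Omega_1,\Omega_2\}$ is an admissible partition in the sense of Definition~\ref{def:add_sep}, and $\chi_1 x+\chi_2 x=x$ almost everywhere, so it is legitimate to test the separability identities on this configuration.

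First I would record the norms involved, using only the definition \eqref{eq:Lux_norm} of the Luxemburg norm together with Lemma~\ref{normamodular2}. Since $\rhopvar(\chi_i x/\lambda)=1$ forces $\lambda=1$, we get $\|\chi_1 x\|\pnorm=\|\chi_2 x\|\pnorm=1$; on the other hand $\rhopvar(x/\lambda)=\lambda^{-2}+\lambda^{-4}=1$ gives $\|x\|\pnorm=L$ with $L^2=\tfrac{1+\sqrt5}{2}$ and, in particular, $L^4=L^2+1$. For the norm this already settles the claim, since $\|\chi_1 x\|\pnorm+\|\chi_2 x\|\pnorm=2\neq L=\|x\|\pnorm$; and by positive homogeneity the same choice shows $\|\cdot\|\pnorm^{\,s}$ fails to be separable for every $s>0$. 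Conceptually the obstruction is that, unlike in $\lp$ where $\|\cdot\|_p^p$ coincides with the (separable) integral $\rhop$, recovering $\|\cdot\|\pnorm$ from the separable modular $\rhopvar$ requires a nonlinear $\tilde p$-th root with $p_-\le\tilde p\le p_+$ (cf.\ Proposition~\ref{prop_norma_modular_radici}), which destroys additivity.

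For the duality map I would plug $x\equiv 1$ into the explicit expression \eqref{def:dual_map_lpvar} and test it against $u=\chi_1$, the latter chosen precisely so that the term coming from $\chi_2 x$ drops out by disjointness of supports. On one side, $\langle\mathbf{J}_{\lpvarb}^r(\chi_1 x),\chi_1\rangle+\langle\mathbf{J}_{\lpvarb}^r(\chi_2 x),\chi_1\rangle=\langle\mathbf{J}_{\lpvarb}^r(\chi_1),\chi_1\rangle=\|\chi_1\|\pnorm^{\,r}=1$, where we used the identity $\langle\mathbf{J}_\mathcal{X}^r(y),y\rangle=\|y\|_\mathcal{X}^r$ valid since $\lpvar$ is smooth. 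On the other side, a short computation in \eqref{def:dual_map_lpvar} — note that the denominator normalisation is $r$-independent, while the numerator carries the factor $\|x\|\pnorm^{\,r-p(t)}$ — gives $\langle\mathbf{J}_{\lpvarb}^r(x),\chi_1\rangle=L^{r+2}/(L^2+2)$, which for $r=2$ equals $(L^2+1)/(L^2+2)<1$. Hence $\sum_i\mathbf{J}_{\lpvarb}^r(\chi_i x)\neq\mathbf{J}_{\lpvarb}^r(x)$, so the duality map (say with gauge $r=2$) is not separable; for a general gauge $r$ the two sides coincide for at most one exceptional value, which is avoided by a minor change of $p(\cdot)$ or of $x$, so the conclusion is insensitive to the particular gauge. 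The only place calling for care is the bookkeeping in the fraction \eqref{def:dual_map_lpvar}, together with the key observation that what breaks separability is exactly the occurrence of the \emph{global} quantity $\|x\|\pnorm$, which is altered when $x$ is replaced by $\chi_i x$; this also explains why in $\lp$ separability survives only for the single gauge $r=p$ (the power $\|x\|_p^{r-p}$ then disappears), whereas in $\lpvar$ no gauge removes the norm dependence.
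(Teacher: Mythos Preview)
Your argument is correct and in fact considerably more rigorous than the paper's. The paper does not give a counterexample at all: it merely points out that the Luxemburg norm \eqref{eq:Lux_norm} is defined through a global one-dimensional minimisation, and that the expression \eqref{def:dual_map_lpvar} for $\mathbf{J}_{\lpvarb}^r$ involves the global quantity $\|x\|\pnorm$ in its denominators, so neither can be expected to decompose over a partition. This is a structural explanation rather than a proof. Your approach instead exhibits a concrete $\Omega$, $p(\cdot)$, $x$ and partition on which the separability identities visibly fail, and your computations (the golden-ratio value $L^2=\tfrac{1+\sqrt5}{2}$ for the norm, and $\langle\mathbf{J}_{\lpvarb}^2(x),\chi_1\rangle=(L^2+1)/(L^2+2)\neq 1$ for the duality map) are correct. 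What you lose in generality you gain in rigour; the paper's argument tells the reader \emph{why} separability fails, yours shows unambiguously \emph{that} it does.

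One small point: your sentence ``by positive homogeneity the same choice shows $\|\cdot\|\pnorm^{\,s}$ fails to be separable for every $s>0$'' is not literally justified by homogeneity alone, since for the single value $s$ with $L^s=2$ your specific $x$ happens to satisfy the identity (and rescaling $x$ does not change the ratio). The fix is the same one you already use for the gauge $r$ --- pick a second test function (e.g.\ $x=\chi_1+2\chi_2$) to rule out that exceptional $s$ --- so the issue is cosmetic, but it is worth tightening the wording.
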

\begin{proof}
It is quite evident that the Luxemburg norm \eqref{eq:Lux_norm} requires the solution of a 1D minimization problem on the entire domain $\Omega$, which, in general, cannot be divided into the solutions on single sets of the partition, that is,
$\|x\|\pnorm\not = \sum_{i=1}^{n} \| \chi_i  \, x\|_{\lpvar}\,$. 
As the duality mapping is concerned, the two norms in the denominators of $\mathbf{J}_{L^{p(\cdot)}}$ \eqref{def:dual_map_lpvar} show that its computation cannot be decomposed into the computation of $n$ integrals involving only the restriction of the function $x$ onto single sets of the partition, or, in other words, $\mathbf{J}_{\lpvar}^r(x)\not = \sum_{i=1}^{n} \mathbf{J}_{\lpvar}^r( \chi_i  \, x)\,$.
\end{proof}

The modular functions introduced in Definition \ref{eq:mod_rho} as well as their gradients turn out instead to satisfy the separability property. 

\begin{lemma}
The modular functions in $\lpvar$  and their gradients are domain additively separable, in the sense of Definition \ref{def:add_sep}.
\end{lemma}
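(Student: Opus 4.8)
The plan is to verify the defining identity of Definition \ref{def:add_sep} directly for each of the four objects $\rhopvar$, $\rhobarpvar$, $\jrho$, $\jrhobar$, exploiting the fact that all of them are given by an integral over $\Omega$ of a pointwise expression in $x(t)$ (and, for the gradients, also in the test direction $u(t)$). The underlying principle is elementary: for a finite family $(\Omega_i)_{i=1}^n$ of Lebesgue measurable sets with pairwise disjoint interiors and $\Omega=\bigcup_{i=1}^n\Omega_i$, the overlaps $\Omega_i\cap\Omega_j$ for $i\neq j$ have Lebesgue measure zero (being contained in boundaries of the sets in a partition-up-to-null-sets), so additivity of the Lebesgue integral gives $\int_\Omega h(t)\,\diff t=\sum_{i=1}^n\int_{\Omega_i}h(t)\,\diff t$ for any integrable $h$. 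It then remains to observe that $\int_{\Omega_i}h(t)\,\diff t=\int_\Omega \chi_i(t)h(t)\,\diff t$, and that for the specific integrands at hand the factor $\chi_i(t)$ can be absorbed into the argument $x$.

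First I would treat $\rhobarpvar$. For $x\in\lpvar$ and a fixed index $i$, the function $\chi_i\,x$ is supported on $\Omega_i$, and since $\chi_i(t)\in\{0,1\}$ one has $|\chi_i(t)x(t)|^{p(t)}=\chi_i(t)|x(t)|^{p(t)}$ for a.e. $t$ (the identity $0^{p(t)}=0$ holding because $p(t)\ge p_->1$). Hence
\begin{equation*}
\rhobarpvar(\chi_i\,x)=\int_\Omega \frac{1}{p(t)}\chi_i(t)|x(t)|^{p(t)}\,\diff t=\int_{\Omega_i}\frac{1}{p(t)}|x(t)|^{p(t)}\,\diff t,
\end{equation*}
and summing over $i$ and using countable (here finite) additivity of the integral over the essentially disjoint cover yields $\sum_{i=1}^n\rhobarpvar(\chi_i\,x)=\int_\Omega\frac{1}{p(t)}|x(t)|^{p(t)}\,\diff t=\rhobarpvar(x)$. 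Note that each $\rhobarpvar(\chi_i\,x)$ is finite because $\chi_i\,x\in\lpvar$ (a product of a bounded measurable function and an element of $\lpvar$) and $p_+<+\infty$, so Proposition \ref{prop_norma_modular_radici} applies; this keeps the rearrangement legitimate. The argument for $\rhopvar$ is verbatim the same with the weight $1/p(t)$ removed.

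For the gradients I would argue analogously using the explicit formulas of Proposition \ref{prop_jrho_jrhobar}. Fix $u\in\lpvar$; then for each $i$,
\begin{equation*}
\langle\jrhobar(\chi_i\,x),u\rangle=\int_\Omega \sign(\chi_i(t)x(t))\,|\chi_i(t)x(t)|^{p(t)-1}u(t)\,\diff t=\int_{\Omega_i}\sign(x(t))\,|x(t)|^{p(t)-1}u(t)\,\diff t,
\end{equation*}
again because $\chi_i$ is $\{0,1\}$-valued so it is idempotent under all the pointwise operations involved and cuts the domain down to $\Omega_i$. Summing over $i$ and invoking finite additivity of the integral over the essentially disjoint cover gives $\sum_{i=1}^n\langle\jrhobar(\chi_i\,x),u\rangle=\int_\Omega\sign(x(t))|x(t)|^{p(t)-1}u(t)\,\diff t=\langle\jrhobar(x),u\rangle$ for every $u\in\lpvar$, i.e. $\jrhobar(x)=\sum_{i=1}^n\jrhobar(\chi_i\,x)$ as elements of $\lpvardual$; the case of $\jrho$ is identical up to the factor $p(t)$. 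There is no real obstacle here — the only points deserving a word of care are that the sets of a partition "up to interiors" overlap on null sets (so the integral genuinely splits), that $\chi_i$ being $\{0,1\}$-valued lets $\chi_i$ migrate in and out of $|\cdot|^{p(t)}$, $\sign(\cdot)$ and the integrand freely, and that finiteness of every term is guaranteed by $p_+<+\infty$ together with $\chi_i\,x\in\lpvar$; I would state these three observations explicitly and then the four identities follow in one line each.
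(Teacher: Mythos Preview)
Your proof is correct and follows essentially the same approach as the paper: split the integral over $\Omega$ into the sum over the $\Omega_i$ using additivity, then absorb the characteristic function into the argument via $|\chi_i(t)x(t)|^{p(t)}=\chi_i(t)|x(t)|^{p(t)}$. You are in fact more careful than the paper about the auxiliary points (null-measure overlaps, why $\chi_i\in\{0,1\}$ allows migration through $|\cdot|^{p(t)}$ and $\sign(\cdot)$, finiteness via $p_+<+\infty$), but the argument is the same.
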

\begin{proof}
We consider the modular function $\rhobarpvar(x)=\int_\Omega \frac{1}{p(t)}|x(t)|^{p(t)} \diff t$ defined in \eqref{eq:mod_rhobar} (for \eqref{eq:mod_rho}, the proof is similar). By direct computation, by the linearity property of the integral w.r.t. the integration domain, we have
\begin{equation*}
\begin{aligned}
\rhobarpvar(x) &=
\sum_{i=1}^{n} \int_{\Omega_i} \frac{1}{p(t)}|x(t)|^{p(t)} \diff t = \sum_{i=1}^{n} \int_\Omega \frac{1}{p(t)}|\chi_i(t)x(t)|^{p(t)} \diff t = \sum_{i=1}^{n} \rhobarpvar(\chi_i x).
\end{aligned}
\end{equation*}
Similarly, for  $\jrhobar$, we can write
\begin{equation*}
\begin{aligned}
& \langle\jrhobar(x),u\rangle  =\int_\Omega \sign(x(t))|x(t)|^{p(t)-1}u(t)\diff t = 
\sum_{i=1}^{n} \int_{\Omega_i} \sign(x(t))|x(t)|^{p(t)-1}u(t)\diff t\\
 & = \sum_{i=1}^{n} \int_{\Omega} \sign(x(t))|\chi_i(t)  \, x(t)|^{p(t)-1}u(t)\diff t = \sum_{i=1}^{n} \langle\jrhobar(\chi_i  \, x),u\rangle = \langle\sum_{i=1}^{n}\jrhobar(\chi_i  \, x),u\rangle
\end{aligned}
\end{equation*}
which concludes the proof.
\end{proof}


\section{A modular-proximal gradient algorithm}
\label{sec:Bredies_lpvar}
In this section, we propose and analyze an iterative procedure to solve the minimization problem \eqref{eq:P}.
We set $\Bar{\phi}:=\inf_{x\in\lpvar} \phi(x)$, and define Sol(P) as $\text{Sol(P)}:=\{x\in\lpvar:\phi(x)=\Bar{\phi}\}\neq \emptyset$. We consider the following two assumptions.
\begin{ass}
The exponent function $p(\cdot)$ is such that $1<p_-\le p_+\le2$.
\label{ass1_bredies}
\end{ass}
\begin{ass}
\label{holder}
$\nabla f : \lpvar \longrightarrow \lpvar^*$ is $(\textit{\texttt{p}} -1)$H{\"o}lder-continuous with exponent $p_+\le\textit{\texttt{p}}\le2$ and constant $K>0$, i.e.: 
\[
\|\nabla f(u)-\nabla f(v)\|\dualnorm \le K\|u-v\|\pnorm^{\textit{\texttt{p}} -1} \qquad \forall \ u,v\in\lpvar.
\]
\end{ass}
The first modular-proximal gradient algorithm we propose is reported in Algorithm \ref{alg_bredies}, as pseudocode. 
\begin{algorithm}[h!]
\caption{Modular-proximal gradient algorithm in $\lpvar$ spaces}
\label{alg_bredies}
\small{
\textbf{Parameters:} $\rho\in(0,1)$, $\{\tau_k\}_k$ s.t. 
\begin{equation}
0<\Bar{\tau}\le \tau_k \le \frac{\textit{\texttt{p}}(1-\delta)}{K} \qquad \text{with} \ 0<\delta<1.
\label{eq:step_bunds_algbredies}
\end{equation}
\textbf{Initialization:} Start with $x^0\in\lpvar$.\\
{
\textsc{FOR $k=0,1,\ldots$ REPEAT}}
\begin{itemize}
\item[]{\textsc{FOR $i=0,1,\ldots $ REPEAT}}
    \begin{itemize}
        \item[\textsc{1.}] Set $\tau_{k}=\rho^i \tau_{k}$.
        \item[\textsc{2.}] Compute the next iterate as: 
        \begin{equation}
        \xkk=\argmin_{x\in\lpvar} \rhobarpvar(x-\xk) + \tau_k \langle \nabla f(\xk),x\rangle+\tau_k g(x).
        \label{bredies} 
        \end{equation}
    \end{itemize}
\item[]{\textsc{UNTIL}} $\rhopvar(\xk-\xkk)<1$
\end{itemize}
{\textsc{UNTIL}} convergence
}
\end{algorithm}
The inner loop is needed to select at each $k$-th iteration a sufficiently small step-size $\tau_k$ such that $\rhopvar(\xk-\xkk)<1$, which is required in the following convergence analysis as we will see in the proofs of Proposition \ref{prop:bredies_descent} and Lemma \ref{lemma:bredies_rate}. It should be thought as a backtracking-like procedure affecting more the first algorithmic iterations where the quantity $\rhopvar(\xk-\xkk)$ is likely to be large. 

We start our analysis discussing the well-definition of the step \eqref{bredies}, e.g. the existence and uniqueness of the minimizer of the functional defined by \eqref{bredies}.
\begin{prop}  \label{prop:bredies1}
For each $x\in\lpvar$, $v^*\in\lpvar^*$ and $\tau>0$, the problem
\begin{equation}
\argmin_{u\in\lpvar} \rhobarpvar(u-x) + \tau \langle v^*,u\rangle+\tau g(u)
\label{funct_bredies}
\end{equation}
has a unique solution. 
\end{prop}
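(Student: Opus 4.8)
The plan is to prove existence and uniqueness separately by the direct method of calculus of variations, exploiting the convexity/coercivity structure inherited from the modular function $\rhobarpvar$. First I would set $F(u) := \rhobarpvar(u-x) + \tau\langle v^*, u\rangle + \tau g(u)$ and observe that $F$ is proper (since $g$ is proper and the domain of $\rhobarpvar(\cdot - x)$ is all of $\lpvar$ by Proposition \ref{prop_norma_modular_radici}, as $p_+<+\infty$), convex, and lower semi-continuous: $\rhobarpvar$ is convex as an integral of the convex integrands $t\mapsto \frac{1}{p(t)}|t|^{p(t)}$, the linear term $\langle v^*,\cdot\rangle$ is continuous, and $g$ is l.s.c. and convex by the standing hypotheses in \eqref{eq:P}.

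The core step is coercivity. Because $1<p_-\le p_+\le 2$ (Assumption \ref{ass1_bredies}; though actually one only needs $p_->1$ here), I would show $\rhobarpvar(u-x)$ grows super-linearly in $\|u\|\pnorm$. Concretely, using Proposition \ref{prop_norma_modular_radici}: when $\|u-x\|\pnorm>1$ one has $\rhopvar(u-x)\ge \|u-x\|\pnorm^{p_-}$, hence $\rhobarpvar(u-x)\ge \frac{1}{p_+}\rhopvar(u-x)\ge \frac{1}{p_+}\|u-x\|\pnorm^{p_-}$, which together with the triangle inequality $\|u\|\pnorm \le \|u-x\|\pnorm + \|x\|\pnorm$ dominates the linear term $\tau\langle v^*,u\rangle \ge -\tau\|v^*\|\dualnorm\|u\|\pnorm$ since $p_->1$; combined with a lower bound $g(u)\ge \langle w^*,u\rangle + c$ coming from the existence of a continuous affine minorant of the proper l.s.c. convex function $g$, this yields $F(u)\to+\infty$ as $\|u\|\pnorm\to\infty$. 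Then, since $\lpvar$ is reflexive (Theorem \ref{lpvar:reflexivity}), a minimizing sequence admits a weakly convergent subsequence, and weak lower semi-continuity of the convex l.s.c. functional $F$ gives existence of a minimizer.

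For uniqueness I would invoke strict convexity: $\lpvar$ is uniformly convex (Theorem \ref{lpvar:reflexivity}), and more to the point $\rhobarpvar$ is \emph{strictly} convex on $\lpvar$ — the integrand $s\mapsto \frac{1}{p(t)}|s|^{p(t)}$ is strictly convex for a.e.\ $t$ since $p(t)\ge p_->1$, and strict convexity is preserved under the integral (two distinct functions differ on a positive-measure set, where the strict inequality is strict). Hence $u\mapsto \rhobarpvar(u-x)$ is strictly convex, the linear term is convex, $g$ is convex, so $F$ is strictly convex and its minimizer is unique.

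The main obstacle is making the coercivity estimate fully rigorous when combining the three terms: one must handle the affine minorant of $g$ (whose existence follows from $g$ being proper, convex, l.s.c.\ on the Banach space $\lpvar$) and verify that the super-linear growth $\|u-x\|\pnorm^{p_-}$ with $p_->1$ genuinely beats the linear contributions of $\langle v^*,\cdot\rangle$ and of the affine minorant — this is where the strict inequality $p_->1$ is essential, and it is a short but slightly delicate argument with the triangle inequality and Young's inequality. Everything else (properness, convexity, weak l.s.c., reflexivity) is standard once the cited properties of $\lpvar$ and the modular are in hand.
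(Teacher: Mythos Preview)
Your proposal is correct and follows essentially the same approach as the paper: coercivity via the modular--norm comparison $\rhobarpvar(u-x)\ge \frac{1}{p_+}\|u-x\|\pnorm^{p_-}$ from Proposition~\ref{prop_norma_modular_radici}, existence by reflexivity and weak lower semi-continuity, and uniqueness by strict convexity of the modular (indeed only $p_->1$ is needed). The only minor difference is that the paper obtains the affine minorant of $g$ explicitly from the standing assumption $\text{Sol(P)}\neq\emptyset$ via the optimality condition $-\nabla f(\bar{x})\in\partial g(\bar{x})$, whereas you invoke the general Hahn--Banach fact that a proper l.s.c.\ convex function admits a continuous affine minorant --- your route is slightly more self-contained since it does not rely on Sol(P) being nonempty.
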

\begin{proof}
Let $\tau>0$. Note that when $\|u-x\|\pnorm>1$, by Lemma \ref{normamodular2}\eqref{eq:normamaggioredi1} there holds:
$$
\rhobarpvar(u-x)\ge \frac{1}{p_+}\rhopvar (u-x)\ge \frac{1}{p_+}\|u-x\|\pnorm^{p_-}.
$$
Let now $\bar{x}\in\text{Sol(P)}$. The optimality condition reads: $0\in\nabla f(\bar{x})+\partial g(\bar{x})$ or, equivalently, $\bar{\omega}:=-\nabla f(\bar{x})\in\partial g(\bar{x})$. By definition of subdifferential, there holds $g(u)\ge g(\bar{x})+\langle \bar{\omega},u-\bar{x}\rangle=g(\bar{x})+\langle \bar{\omega},u\rangle-\langle\bar{\omega},\bar{x}\rangle$ for all $u\in\lpvar$. By combining such inequality with  the Cauchy-Schwarz inequality, we get
\begin{equation*}
\begin{aligned}
&  \rhobarpvar(u-x) + \tau \langle v^*,u\rangle+\tau g(u)\\
& \ge \frac{1}{p_+}\|u-x\|\pnorm^{p_-} + \tau \langle v^*+\bar{\omega},u\rangle+\tau g(\bar{x})-\tau\langle\bar{\omega},\bar{x}\rangle\\
& \ge \|u\|\pnorm\Big[ \frac{1}{p_+}\frac{\|u-x\|\pnorm^{p_-}}{\|u\|\pnorm} +\tau\frac{\langle v^*+\bar{\omega}, u\rangle}{\|u\|\pnorm} +\frac{\tau g(\bar{x})-\tau\langle\bar{\omega},\bar{x}\rangle}{\|u\|\pnorm}\Big]\\
& \ge \|u\|\pnorm\Big[ \frac{1}{p_+}\frac{\|u-x\|\pnorm^{p_-}}{\|u\|\pnorm} -\tau\|v^*+\bar{\omega}\|\dualnorm +\tau\frac{g(\bar{x})-\langle\bar{\omega},\bar{x}\rangle}{\|u\|\pnorm}\Big]\ge L \|u\|\pnorm
\end{aligned}
\end{equation*}
for some $L>0$ and all $u\in\lpvar$ such that $\|u\|\pnorm$ is large enough. Note, in particular, that  $\frac{1}{p_+}\frac{\|u-x\|\pnorm^{p_-}}{\|u\|\pnorm}\rightarrow +\infty$ as $\|u\|\pnorm\rightarrow+\infty$ since $p_->1$ and $x$ is fixed.
Hence, the functional in \eqref{funct_bredies} is coercive. Moreover, it is convex, proper and l.s.c. in $\lpvar$, which, by Theorem \ref{lpvar:reflexivity}, is reflexive: thus at least one solution exists. Moreover, since $1<p_-\le p_+\le2$, the functional is strictly convex, and hence the solution is unique.
\end{proof}

Clearly, the convergence analysis of Algorithm \ref{alg_bredies} is related to the study of fixed points of iterations \eqref{funct_bredies}. More precisely, we have the following result.  

\begin{prop}
The solutions of \eqref{eq:P} coincide with the fixed points of the iteration defined by Algorithm \ref{alg_bredies}.
\end{prop}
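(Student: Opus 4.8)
The plan is to show the two inclusions: every solution of \eqref{eq:P} is a fixed point of the map defined by \eqref{bredies}, and conversely every fixed point solves \eqref{eq:P}. The natural tool is the first-order optimality condition for the (strictly convex, coercive, proper, l.s.c.) minimisation problem \eqref{funct_bredies}, whose well-posedness was just established in Proposition \ref{prop:bredies1}. Writing $x^\star$ for a candidate fixed point, the statement ``$x^\star = \argmin_{x}\rhobarpvar(x-x^\star)+\tau\langle\nabla f(x^\star),x\rangle+\tau g(x)$'' is equivalent, by Fermat's rule, to
\begin{equation*}
0 \in \partial\Big(\rhobarpvar(\cdot-x^\star)+\tau\langle\nabla f(x^\star),\cdot\rangle+\tau g(\cdot)\Big)(x^\star).
\end{equation*}

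First I would compute this subdifferential. The term $\rhobarpvar(\cdot-x^\star)$ is Gateaux differentiable by Proposition \ref{prop_jrho_jrhobar}, with derivative $\jrhobar(x^\star-x^\star)=\jrhobar(0)=0$; the linear term contributes $\tau\nabla f(x^\star)$; and, since the first two terms are everywhere finite and continuous, the sum rule for subdifferentials (Moreau--Rockafellar) applies, giving
\begin{equation*}
\partial\Big(\rhobarpvar(\cdot-x^\star)+\tau\langle\nabla f(x^\star),\cdot\rangle+\tau g(\cdot)\Big)(x^\star) = \jrhobar(0) + \tau\nabla f(x^\star) + \tau\partial g(x^\star) = \tau\nabla f(x^\star) + \tau\partial g(x^\star).
\end{equation*}
Hence the fixed-point condition reads $0 \in \tau(\nabla f(x^\star)+\partial g(x^\star))$, i.e. (dividing by $\tau>0$) $0\in\nabla f(x^\star)+\partial g(x^\star)$. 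This last inclusion is exactly the optimality condition for \eqref{eq:P}: since $\phi=f+g$ with $f$ convex and Gateaux differentiable and $g$ proper convex l.s.c., $\partial\phi(x^\star)=\nabla f(x^\star)+\partial g(x^\star)$, and $x^\star\in\text{Sol(P)}$ iff $0\in\partial\phi(x^\star)$ by convexity. This chain of equivalences proves both inclusions simultaneously.

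The only genuinely delicate point is the evaluation $\jrhobar(0)=0$, which requires that the formula in Proposition \ref{prop_jrho_jrhobar}, namely $\langle\jrhobar(x),u\rangle=\int_\Omega \sign(x(t))|x(t)|^{p(t)-1}u(t)\,\diff t$, remains valid (and gives the zero functional) at $x=0$; this follows because $p_->1$ makes the integrand vanish identically when $x\equiv 0$, and the Gateaux differentiability argument in the proof of that Proposition covers $x=0$ as well. A minor bookkeeping remark: the inner backtracking loop only rescales $\tau_k$ but never changes which point minimises \eqref{bredies} relative to its argument, so ``fixed point of the iteration defined by Algorithm \ref{alg_bredies}'' is unambiguous and coincides with the fixed-point notion used above. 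Everything else is a routine application of convex subdifferential calculus, so I expect no real obstacle beyond carefully invoking the sum rule and the differentiability of $\rhobarpvar$.
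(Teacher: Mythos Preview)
Your proposal is correct and follows essentially the same route as the paper: both rely on Fermat's rule for \eqref{bredies}, the subdifferential sum rule, and the observation that $\jrhobar(0)=0$ so that the fixed-point condition collapses to $0\in\nabla f(x^\star)+\partial g(x^\star)$. The only cosmetic difference is that the paper treats the two inclusions separately while you package them as a single chain of equivalences.
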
 
\begin{proof}
Suppose that for some $k\geq 0$ $\xk\in\text{Sol(P)}$. Then, by optimality, $-\nabla f(\xk)\in \partial g(\xk)$. Note that $u$ solves \eqref{bredies} if and only if the following inclusion holds
\begin{equation*}
    -\tau_k\nabla f(\xk)\in\jrhobar(u-\xk)+\tau_k\partial g(u)\,,
\end{equation*}
where $\jrhobar$ is the derivative of $\rhobarpvar$, according to Proposition \ref{prop_jrho_jrhobar}. Clearly, $u=\xk$ is a solution of \eqref{bredies}, hence $\xkk=\xk$, so $\xk$ is a fixed point of the iteration process. 

Conversely, suppose now $\xk=\xkk\in\lpvar$, then
\begin{equation*}
    -\tau_k\nabla f(\xk)\in\jrhobar(\xkk-\xk)+\tau_k\partial g(\xkk)=\tau_k\partial g(\xkk)=\tau_k\partial g(\xk),
\end{equation*}
meaning that $\xk$ is optimal, that is, $\xk\in\text{Sol(P)}$.
\end{proof}

\subsection{Convergence analysis}
We provide here a detailed convergence analysis of Algorithm \ref{alg_bredies} in order to provide an insight on its convergence speed in function values. Our analysis is inspired by the one conducted in \cite{Bredies2008}, although it relies on properties related to the modular function $\rhobarpvar$ rather than to the norm $\|\cdot\|_{\lpvarb}$. As such, it relies on different arguments, as we will make precise in the following.
\begin{lemma}
For each $u \in \lpvar$, the following inequality holds true:
\begin{equation}
\label{primoboundbredies}
\left\langle\frac{\jrhobar(\xk-\xkk)}{\tau_k}, u-\xkk\right\rangle \leq  g(u)-g(\xkk)+\left\langle\nabla f(\xk), u-\xkk\right\rangle.
\end{equation}
Moreover, we denote by $D(\xk)$ the quantity 
\begin{equation}
D(\xk):=g(\xk)-g(\xkk)+\left\langle\nabla f(\xk), \xk-\xkk\right\rangle.
\label{eq:assa_D_bredies}
\end{equation}
and have that
\begin{equation}
\rhopvar(\xk-\xkk) \leq \tau_k D(\xk).
\label{eq:ass2consbredies}
\end{equation}
\label{lemma1_bredies}
\end{lemma}
\begin{proof}
Note that $\xkk$ solves \eqref{bredies} if and only if 
\begin{align}
    & 0\in\jrhobar(\xkk-\xk)+\tau_k\nabla f(\xk)+\tau_k\partial g(\xkk) \Longleftrightarrow \notag\\
    &\frac{\jrhobar(\xk-\xkk)}{\tau_k}-\nabla f(\xk) \in \partial g(\xkk).\notag
\end{align}
By definition of subdifferential, we thus have that, for all $u \in \lpvar$,
$$
\left\langle\frac{\jrhobar(\xk-\xkk)}{\tau_k}-\nabla f(\xk), u-\xkk\right\rangle \leq g(u)-g(\xkk), 
$$
which, by rearranging, coincides with \eqref{primoboundbredies}.
Choosing now $u=\xk$ above and recalling \eqref{eq:assa_D_bredies}, we get
$
\left\langle\frac{\jrhobar(\xk-\xkk)}{\tau_k}, \xk-\xkk\right\rangle \leq D(\xk).
$
Applying now \eqref{eq:Jandmodular} entails
$$
\left\langle\frac{\jrhobar(\xk-\xkk)}{\tau_k}, \xk-\xkk\right\rangle= \frac{\rhopvar(\xk-\xkk)}{\tau_k},
$$
by which \eqref{eq:ass2consbredies} follows directly.
\end{proof}
The following proposition shows that the iteration scheme \eqref{bredies} leads to a descent of the functional $\phi$ of our minimization problem \eqref{eq:P}. This will be crucial for the following convergence analysis.

\begin{prop}
For every $k\geq 0$, if $\rhopvar(\xk-\xkk)<1$, then the iteration defined by Algorithm \ref{alg_bredies} satisfies
\begin{equation}
\phi(\xkk) \leq \phi(\xk)-\Big(1-\frac{K \tau_k}{\textit{\texttt{p}}}\Big) D(\xk).
\label{eq:discesaphibredies}
\end{equation}
\label{prop:bredies_descent}
\end{prop}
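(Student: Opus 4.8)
The plan is to combine the descent-type estimate coming from the H\"older continuity of $\nabla f$ with the subdifferential inequality \eqref{primoboundbredies} and the modular bound \eqref{eq:ass2consbredies}. First I would write $\phi(\xkk)-\phi(\xk) = \big(f(\xkk)-f(\xk)\big) + \big(g(\xkk)-g(\xk)\big)$ and estimate the two differences separately. For the smooth part, since $\nabla f$ is $(\texttt{p}-1)$-H\"older continuous with constant $K$ and exponent $\texttt{p}\le 2$, the standard descent lemma in Banach spaces (obtained by writing $f(\xkk)-f(\xk) = \int_0^1 \langle \nabla f(\xk + s(\xkk-\xk)), \xkk-\xk\rangle\,\mathrm ds$ and adding and subtracting $\langle \nabla f(\xk),\xkk-\xk\rangle$) gives
\begin{equation*}
f(\xkk) \le f(\xk) + \langle \nabla f(\xk), \xkk-\xk\rangle + \frac{K}{\texttt{p}}\|\xkk-\xk\|\pnorm^{\texttt{p}}.
\end{equation*}
For the nonsmooth part, I would apply \eqref{primoboundbredies} with the choice $u=\xk$, which yields $g(\xkk) \le g(\xk) + \langle \nabla f(\xk), \xk-\xkk\rangle - \left\langle \frac{\jrhobar(\xk-\xkk)}{\tau_k}, \xk-\xkk\right\rangle$, and then invoke \eqref{eq:Jandmodular} to rewrite the last pairing as $\frac{1}{\tau_k}\rhopvar(\xk-\xkk)$.

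Adding the two inequalities, the terms $\pm\langle \nabla f(\xk), \xkk-\xk\rangle$ cancel and we are left with
\begin{equation*}
\phi(\xkk) \le \phi(\xk) + \frac{K}{\texttt{p}}\|\xkk-\xk\|\pnorm^{\texttt{p}} - \frac{1}{\tau_k}\rhopvar(\xk-\xkk).
\end{equation*}
Now the key point is to control the H\"older term $\|\xkk-\xk\|\pnorm^{\texttt{p}}$ by the modular $\rhopvar(\xk-\xkk)$. This is exactly where the hypothesis $\rhopvar(\xk-\xkk)<1$ enters: by Lemma \ref{normamodular2}\eqref{normamodular2_i} this forces $\|\xkk-\xk\|\pnorm<1$, and then Proposition \ref{prop_norma_modular_radici}\eqref{eq:normaminoredi1} gives $\|\xkk-\xk\|\pnorm \le \rhopvar(\xk-\xkk)^{1/p_+} \le \rhopvar(\xk-\xkk)^{1/\texttt{p}}$ (using $\texttt{p}\ge p_+$), hence $\|\xkk-\xk\|\pnorm^{\texttt{p}} \le \rhopvar(\xk-\xkk)$. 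Substituting this in,
\begin{equation*}
\phi(\xkk) \le \phi(\xk) + \frac{K}{\texttt{p}}\rhopvar(\xk-\xkk) - \frac{1}{\tau_k}\rhopvar(\xk-\xkk) = \phi(\xk) - \frac{1}{\tau_k}\Big(1 - \frac{K\tau_k}{\texttt{p}}\Big)\rhopvar(\xk-\xkk).
\end{equation*}
Finally I would bring in \eqref{eq:ass2consbredies}: since $\rhopvar(\xk-\xkk)\le \tau_k D(\xk)$, and since $1 - \frac{K\tau_k}{\texttt{p}}\ge \delta >0$ by the step-size bound \eqref{eq:step_bunds_algbredies} (so the coefficient is nonnegative), multiplying both sides of \eqref{eq:ass2consbredies} by $\frac{1}{\tau_k}(1-\frac{K\tau_k}{\texttt{p}})\ge 0$ yields $\frac{1}{\tau_k}(1-\frac{K\tau_k}{\texttt{p}})\rhopvar(\xk-\xkk) \le (1-\frac{K\tau_k}{\texttt{p}}) D(\xk)$, which combined with the previous display gives precisely \eqref{eq:discesaphibredies}.

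The main obstacle is the non-homogeneity of the modular: unlike in $\lp$ spaces where $\|\cdot\|_p^p$ and the descent term scale identically, here one must carefully track whether to bound the norm from above by a $p_+$-root or a $p_-$-root of the modular, and the direction of these inequalities flips across the unit ball (Proposition \ref{prop_norma_modular_radici}). The inner backtracking loop enforcing $\rhopvar(\xk-\xkk)<1$ is precisely what makes the favorable branch available, and the compatibility of the exponents $p_+ \le \texttt{p} \le 2$ (Assumptions \ref{ass1_bredies} and \ref{holder}) is what lets the H\"older exponent absorb the modular exponent in the estimate $\|\xkk-\xk\|\pnorm^{\texttt p}\le \rhopvar(\xk-\xkk)$. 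One should also double-check that the Banach-space descent lemma for $(\texttt p-1)$-H\"older gradients is available in the form stated; if only a version on bounded sets is assumed (as in Bredies' original reference), one would instead need an a priori bound on the iterates, but the coercivity argument in Proposition \ref{prop:bredies1} together with the monotone decrease being established should close that gap.
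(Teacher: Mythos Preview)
Your argument is sound up to and including the display
\[
\phi(\xkk) \le \phi(\xk) - \frac{1}{\tau_k}\Big(1-\frac{K\tau_k}{\texttt{p}}\Big)\rhopvar(\xk-\xkk),
\]
but the very last step is a sign error. From $\rhopvar(\xk-\xkk)\le\tau_k D(\xk)$ you correctly derive $\frac{1}{\tau_k}(1-\frac{K\tau_k}{\texttt{p}})\rhopvar \le (1-\frac{K\tau_k}{\texttt{p}})D(\xk)$, but this means $-\frac{1}{\tau_k}(1-\frac{K\tau_k}{\texttt{p}})\rhopvar \ge -(1-\frac{K\tau_k}{\texttt{p}})D(\xk)$, so substituting into the display above does \emph{not} yield \eqref{eq:discesaphibredies}; the inequality points the wrong way. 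Concretely, your bound is $\phi(\xkk)\le \phi(\xk)-c\,\rhopvar$ with $c>0$, and replacing $\rhopvar$ by the larger quantity $\tau_k D(\xk)$ only makes the right-hand side smaller, not larger.

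The underlying reason is that you used \eqref{primoboundbredies} with $u=\xk$ to bound $g(\xkk)-g(\xk)$, thereby replacing the exact expression $-D(\xk)+\langle\nabla f(\xk),\xk-\xkk\rangle$ by the weaker bound $-\frac{1}{\tau_k}\rhopvar+\langle\nabla f(\xk),\xk-\xkk\rangle$; you then try to undo this loss using the \emph{same} inequality \eqref{eq:ass2consbredies}, which is circular. The paper avoids this by not bounding the $g$-difference at all: it writes the exact identity
\[
\phi(\xk)-\phi(\xkk) = D(\xk) + \big[f(\xk)-f(\xkk)-\langle\nabla f(\xk),\xk-\xkk\rangle\big],
\]
which follows directly from the definition of $D(\xk)$. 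The bracket is then bounded in absolute value by $\frac{K}{\texttt{p}}\|\xk-\xkk\|\pnorm^{\texttt{p}}\le\frac{K}{\texttt{p}}\rhopvar(\xk-\xkk)\le\frac{K\tau_k}{\texttt{p}}D(\xk)$ exactly as you argue, and since this is a \emph{positive} correction subtracted from $D(\xk)$, the upper bound on $\rhopvar$ now goes in the right direction. Your H\"older/modular estimates are all correct; only the decomposition needs this adjustment.
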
 
\begin{proof}
By \eqref{eq:assa_D_bredies}, we have
\begin{equation}
\label{eq:diffphibredies}
\phi(\xk)-\phi(\xkk)= D(\xk)+f(\xk)-f(\xkk)-\left\langle\nabla f(\xk), \xk-\xkk\right\rangle.
\end{equation}
For the last three terms, we notice that we can write
\begin{equation}
\begin{aligned}
&f(\xk)-f(\xkk)-\left\langle\nabla f(\xk), \xk-\xkk\right\rangle \\
&\quad=\int_{0}^{1}\left\langle\nabla f(\xk+t(\xkk-\xk))-\nabla f(\xk), \xk-\xkk\right\rangle \mathrm{d} t.
\end{aligned}
\label{eq:integralebredies}
\end{equation}
By applying now the $(\texttt{p}-1)$-H{\"o}lder continuity of $\nabla f$ (Assumption \ref{holder}), we can provide an estimate of the absolute value of the right-hand side of \eqref{eq:integralebredies}, since
\[
\begin{aligned}
&\left|\int_{0}^{1}\left\langle\nabla f(\xk+t(\xkk-\xk))-\nabla f(\xk), \xk-\xkk\right\rangle \mathrm{d} t\right| \\
&\leq \int_{0}^{1}\|\nabla f(\xk+t(\xkk-\xk))-\nabla f(\xk)\|\dualnorm \|\xk-\xkk\|\pnorm \mathrm{d} t \\
& \leq \int_{0}^{1}K\|\xk-\xkk\|\pnorm^{\textit{\texttt{p}}} t^{\textit{\texttt{p}}-1} \diff t \leq \frac{K}{\textit{\texttt{p}}}\|\xk-\xkk\|\pnorm^{\textit{\texttt{p}}} 
\end{aligned}
\]
Since we have $\rhopvar(\xk-\xkk)<1$ by construction, by Lemma \ref{normamodular2}\eqref{normamodular2_i} there holds $\|\xk-\xkk\|\pnorm<1$ and $\|\xk-\xkk\|\pnorm<\rhopvar(\xk-\xkk)^{1/p_+}\le\rhopvar(\xk-\xkk)^{1/\textit{\texttt{p}}}$ by Lemma \ref{normamodular2}\eqref{eq:normaminoredi1}. Hence, by  \eqref{eq:ass2consbredies} we obtain
$$
\begin{aligned}
&\left|\int_{0}^{1}\left\langle\nabla f(\xk+t(\xkk-\xk))-\nabla f(\xk), \xk-\xkk\right\rangle \mathrm{d} t\right| \\
& \leq \frac{K}{\textit{\texttt{p}}}\|\xk-\xkk\|\pnorm^{\textit{\texttt{p}}} 
\leq \frac{K}{\textit{\texttt{p}}}\rhopvar(\xk-\xkk) \leq \frac{K \tau_k}{\textit{\texttt{p}}} D(\xk)\,,
\end{aligned}
$$
which concludes the proof by combining this with \eqref{eq:diffphibredies} and \eqref{eq:integralebredies}.
\end{proof}
For each $k\geq 1$, let us now define for simplicity the $k$-th residual $r_{k}:=\phi(\xk)-\bar{\phi}\,.$
Note that $r_k\ge0$ by definition. We can thus rewrite \eqref{eq:discesaphibredies} as
\begin{equation}  \label{eq:residual}
r_{k}-r_{k+1} \geq\Big(1-\frac{K \tau_k}{\textit{\texttt{p}}}\Big) D(\xk).
\end{equation}
Thanks to the bounds on the step-sizes $\tau_k$, there holds $r_k-r_{k+1}\ge0$, hence the descent of the functional $\phi=f+g$ is guaranteed.

Consider now the conjugate exponent $p'(\cdot)\in\mathcal{P}(\Omega)$. Since $1<p(\cdot) \le 2$ and $\frac{1}{p(t)}+\frac{1}{p'(t)}=1\ a.e.$, there holds $2\le p'(\cdot)<+\infty$ and 
$$
(p')_-:=\essinf_{t\in\Omega} p'(t)=(p_+)' \qquad (p')_+:=\esssup_{t\in\Omega} p'(t)=(p_-)'
$$
where by $(p_+)'$ we denote the conjugate of $p_+$ and by $(p_-)'$ the conjugate of $p_-$. The following lemma shows that by assuming the boundedness of the sequence $(x_k)_k$, an estimate of the right-hand side of \eqref{eq:residual} depending on $(p_-)'$ can be found.
\begin{lemma}
If $(\xk)_k$ is bounded, then $r_k-r_{k+1}\ge c_0 r_k^{(p_-)'}$ with $c_0>0$.
\label{lemma:bredies_rate}
\end{lemma}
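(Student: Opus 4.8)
The plan is to bound $D(\xk)$ from below by a fixed power of the residual $r_k$, and then feed this into \eqref{eq:residual}. Since the step-sizes satisfy \eqref{eq:step_bunds_algbredies} one has $1-\tfrac{K\tau_k}{\texttt{p}}\ge\delta$, so \eqref{eq:residual} already gives $r_k-r_{k+1}\ge\delta\,D(\xk)$ (and, because $D(\xk)\ge0$ by \eqref{eq:ass2consbredies}, also $r_k-r_{k+1}\ge0$, whence $(r_k)_k$ is non-increasing and $D(\xk)\le D_{\max}$ for some uniform $D_{\max}>0$). Thus it suffices to prove $D(\xk)\ge c\,r_k^{(p_-)'}$ for some $c>0$ independent of $k$. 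Fix a solution $\bar x\in\text{Sol(P)}$ and use the algebraic identity
\[
D(\xk)=A_k-B_k,\qquad A_k:=g(\xk)-g(\bar x)+\langle\nabla f(\xk),\xk-\bar x\rangle,\quad B_k:=g(\xkk)-g(\bar x)+\langle\nabla f(\xk),\xkk-\bar x\rangle .
\]
By convexity of $f$, $\langle\nabla f(\xk),\xk-\bar x\rangle\ge f(\xk)-f(\bar x)$, so $A_k\ge\phi(\xk)-\bar\phi=r_k$; and taking $u=\bar x$ in \eqref{primoboundbredies} gives $B_k\le\tfrac{1}{\tau_k}\langle\jrhobar(\xk-\xkk),\xkk-\bar x\rangle$. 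Combining,
\[
D(\xk)\ \ge\ r_k-\frac{1}{\tau_k}\,\|\jrhobar(\xk-\xkk)\|\dualnorm\,\|\xkk-\bar x\|\pnorm .
\]

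The next step is to estimate the dual norm of $\jrhobar(\xk-\xkk)$ via the modular. Writing $w:=\xk-\xkk$, Proposition \ref{prop_jrho_jrhobar} identifies $\jrhobar(w)$ with $G_z$ for $z(t):=\sign(w(t))|w(t)|^{p(t)-1}$, so by \eqref{eq:isom_ass} and Proposition \ref{prop:normaspassduale}, $\|\jrhobar(w)\|\dualnorm=\|z\|\spassnorm\le 2\|z\|\qnorm$. Because $(p(t)-1)\,p'(t)=p(t)$ a.e., a one-line computation gives $\rhoqvar(z)=\rhopvar(w)$; since the inner loop of Algorithm \ref{alg_bredies} enforces $\rhopvar(w)<1$, this quantity is finite and $<1$, so Lemma \ref{normamodular2} yields $\|z\|\qnorm<1$ and then Proposition \ref{prop_norma_modular_radici}\eqref{eq:normaminoredi1} applied in $\lqvar$, together with $(p')_+=(p_-)'$, gives $\|z\|\qnorm\le\rhoqvar(z)^{1/(p_-)'}=\rhopvar(w)^{1/(p_-)'}$. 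Invoking \eqref{eq:ass2consbredies}, $\rhopvar(w)\le\tau_k D(\xk)$, the lower bound $\tau_k\ge\bar\tau$, and the boundedness of $(\xk)_k$ (so that $\|\xkk-\bar x\|\pnorm\le M$ for some $M>0$), we obtain, using $\tfrac{1}{(p_-)'}-1=-\tfrac{1}{p_-}$,
\[
D(\xk)\ \ge\ r_k-2M\,\tau_k^{-1/p_-}\,D(\xk)^{1/(p_-)'}\ \ge\ r_k-C\,D(\xk)^{1/(p_-)'},\qquad C:=2M\,\bar\tau^{-1/p_-}>0 .
\]

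It remains to resolve this implicit inequality. Since $0\le D(\xk)\le D_{\max}$, we have $D(\xk)^{1-1/(p_-)'}\le\gamma:=\max\{1,D_{\max}\}^{1-1/(p_-)'}$, hence $D(\xk)\le\gamma\,D(\xk)^{1/(p_-)'}$ and the previous display becomes $r_k\le(C+\gamma)\,D(\xk)^{1/(p_-)'}$, i.e. $D(\xk)\ge(C+\gamma)^{-(p_-)'}\,r_k^{(p_-)'}$. Combining with $r_k-r_{k+1}\ge\delta D(\xk)$ gives the claim with $c_0:=\delta\,(C+\gamma)^{-(p_-)'}>0$; the degenerate case $\xk=\xkk$ (equivalently $\xk\in\text{Sol(P)}$ and $r_k=0$) is trivial.

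The main obstacle is the middle step: one must obtain \emph{exactly} the exponent $(p_-)'$, and this rests on the identity $(p(t)-1)p'(t)=p(t)$, which turns $\rhoqvar(z)$ into $\rhopvar(w)$, combined with a careful use of the norm–modular inequalities of Proposition \ref{prop_norma_modular_radici} in the dual space $\lqvar$. In particular, the constraint $\rhopvar(\xk-\xkk)<1$ enforced by the backtracking inner loop is indispensable here, since the direction of those inequalities reverses across the unit sphere; everything else is bookkeeping with convexity and the step-size bounds.
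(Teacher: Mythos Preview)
Your proof is correct and follows essentially the same route as the paper: both start from the convexity/subgradient estimate $r_k\le D(\xk)+\tau_k^{-1}\|\jrhobar(\xk-\xkk)\|\dualnorm\,\|\xkk-\bar x\|\pnorm$, both control the dual norm via the identity $\rhoqvar\big(\sign(w)|w|^{p(\cdot)-1}\big)=\rhopvar(w)$ together with the backtracking constraint $\rhopvar(\xk-\xkk)<1$ and Proposition~\ref{prop_norma_modular_radici}\eqref{eq:normaminoredi1} in $\lqvar$, and both finish by absorbing the linear term $D(\xk)$ into the $D(\xk)^{1/(p_-)'}$ term using a uniform bound. The only cosmetic difference is that the paper first substitutes $D(\xk)\le(r_k-r_{k+1})/\delta$ and then invokes boundedness of $r_k-r_{k+1}$, whereas you bound $D(\xk)$ directly by $D_{\max}\le r_0/\delta$; the two are equivalent.
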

\begin{proof}
Since $(\xk)_k$ is bounded, there exists $C_1>0$ such that for all $k\geq 1$ $\|\xk-\bar{x}\|\pnorm \le C_{1}$ for some $\bar{x} \in \text{Sol(P)}$. The convexity of $f$ as well as \eqref{primoboundbredies} with $u=\bar{x}$ gives
\begin{align}  \label{eq:ineq1}
r_k & = f(\xk)+g(\xk)-f(\bar{x})-g(\bar{x}) \le g(\xk)-g(\bar{x})+\left\langle \nabla f(\xk), \xk-\bar{x}\right\rangle \notag \\
& =\left\langle \nabla f(\xk), \xk-\xkk\right\rangle+g(\xk)-g(\bar{x})+\left\langle \nabla f(\xk), \xkk-\bar{x}\right\rangle\notag  \\
& =D(\xk)+g(\xkk)-g(\bar{x})+\left\langle \nabla f(\xk), \xkk-\bar{x}\right\rangle \notag \\
& \le D(\xk)+\left\langle\frac{\jrhobar(\xk-\xkk)}{\tau_k}, \xkk-\bar{x}\right\rangle \notag \\
& \le D(\xk)+\tau_k^{-1}\|\jrhobar(\xk-\xkk)\|\dualnorm \|\xkk-\bar{x}\|\pnorm \notag  \\
& \le D(\xk)+\tau_k^{-1}\|\jrhobar(\xk-\xkk)\|\dualnorm C_1.
\end{align}

Recalling now that $\langle\jrhobar(u),v\rangle=\int_\Omega \sign(u(t))|u(t)|^{p(t)-1}v(t)\diff t$ for any $u,v\in\lpvar$,
by\eqref{eq:isom_ass} and Proposition \ref{prop:normaspassduale} we have
$$\|\jrhobar(u)\|\dualnorm=\|\sign(u)|u|^{p(\cdot)-1}\|\spassnorm\le2\|\sign(u)|u|^{p(\cdot)-1}\|\qnorm.$$
Observe now that the following equality holds true:
$$
\begin{aligned}
\rhoqvar(\sign(u)|u|^{p(\cdot)-1})&=\int_\Omega \Big(\sign(u(t))|u(t)|^{p(t)-1}\Big)^{p'(t)}\diff t\\
&= \int_\Omega \Big(\sign(u(t))|u(t)|\Big)^{p(t)}=\int_\Omega |u(t)|^{p(t)}\diff t=\rhopvar(u).
\end{aligned}
$$
Hence it follows that 
$
\rhoqvar\Big(\sign(\xk-\xkk)|\xk-\xkk|^{p(\cdot)-1}\Big)<1,
$ since by construction we have $\rhopvar(\xk-\xkk)<1$. 
Together with Lemma \ref{normamodular2}\eqref{normamodular2_i}, this leads to
$
\|\sign(\xk-\xkk)|\xk-\xkk|^{p(\cdot)-1}\|\qnorm<1.
$
Furthermore note that for all $u\in\lpvar$, keeping in mind that $(p')_+=(p_-)'$, Lemma \ref{normamodular2}\eqref{eq:normaminoredi1} entails
$$
\|\sign(u)|u|^{p(\cdot)-1}\|\qnorm\le\Big(\rhoqvar(\sign(u)|u|^{p(\cdot)-1})\Big)^{1/(p_-)'},
$$
which can now be evaluated in $u=\xk-\xkk$ and combined with the previous inequalities to get from \eqref{eq:ineq1}
$$
\begin{aligned}
r_k & \le D(\xk)+\tau_k^{-1}\|\jrhobar(\xk-\xkk)\|\dualnorm C_1\\
& \le D(\xk)+\tau_k^{-1} C_1 2\|\sign(\xk-\xkk)|\xk-\xkk|^{p(\cdot)-1}\|\qnorm\\
& \le D(\xk)+\tau_k^{-1} C_1 2\Big(\rhoqvar(\sign(\xk-\xkk)|\xk-\xkk|^{p(\cdot)-1})\Big)^{1/(p_-)'}\\
& = D(\xk)+\tau_k^{-1} C_1 2\Big(\rhopvar(\xk-\xkk)\Big)^{1/(p_-)'},\\
\end{aligned}
$$
so that, by \eqref{eq:ass2consbredies}, we have 
$r_k \le D(\xk)+2 C_1 \tau_k^{-1} \left(\tau_k D(\xk) \right)^{1/(p_-)'}.$
The step-size constraints \eqref{eq:step_bunds_algbredies} together with \eqref{eq:discesaphibredies} entail $r_k-r_{k+1}\ge \delta D(x^k)$ and $\tau_k\ge \bar{\tau}$. Plugging these quantities into the last inequality above, we obtain
\begin{equation*}
\begin{aligned}
r_k & \le \frac{r_k-r_{k+1}}{\delta}+2 C_1\bar{\tau}^{-\frac{(p_-)'-1}{(p_-)'}}\left(\frac{r_k-r_{k+1}}{\delta}\right)^{1/(p_-)'}.
\end{aligned}
\end{equation*}
Note that since $r_k$ is a nonnegative decreasing sequence, then $r_k-r_{k+1}$ is bounded, so that we can write
\[
\delta r_{k} \le (r_k-r_{k+1})^{1/(p_-)'}\left( R^{-\frac{(p_-)'-1}{(p_-)'}} + 2 C_1\bar{\tau}^{-\frac{(p_-)'-1}{(p_-)'}}\delta^{\frac{(p_-)'-1}{(p_-)'}}\right)\,,
\]
for a sufficiently large $R>0$, which finally gives
\[
r_k-r_{k+1} \ge \frac{\delta^{(p_-)'}}{\left( R^{-\frac{(p_-)'-1}{(p_-)'}} + 2 C_1\bar{\tau}^{-\frac{(p_-)'-1}{(p_-)'}}\delta^{\frac{(p_-)'-1}{(p_-)'}}\right)^{(p_-)'}} r_k^{(p_-)'}.
\]
\end{proof}

Thanks to the previous lemma and following analogous arguments as in \cite[Proposition 4]{Bredies2008}, we obtain the following convergence result.

\begin{prop}
\label{prop:convergenceratebredies}
If $(\xk)_k$ is bounded, then the following convergence rate in function values can be found for the iterates of Algorithm \ref{alg_bredies} 
\begin{equation}
r_k \le \eta \frac{1}{k^{p_- -1}}.
\label{eq:conv_Bredies} 
\end{equation}
\end{prop}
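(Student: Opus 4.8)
The plan is to reduce the statement to a standard recursive inequality for non-negative decreasing sequences, exactly in the spirit of \cite[Proposition 4]{Bredies2008}. Lemma \ref{lemma:bredies_rate} supplies the key estimate $r_k-r_{k+1}\ge c_0\,r_k^{(p_-)'}$ with $c_0>0$, and since $1<p_-\le 2$ the conjugate exponent satisfies $(p_-)'=\frac{p_-}{p_--1}\ge 2$, so that $\beta:=(p_-)'>1$ and $\beta-1\ge 1$. First I would dispose of the degenerate case: if $r_{k_0}=0$ for some $k_0$, then by \eqref{eq:residual} together with the step-size bounds \eqref{eq:step_bunds_algbredies} all subsequent residuals vanish and the claimed bound is trivial; otherwise $r_k>0$ for every $k$, because $(r_k)_k$ is non-negative and non-increasing (again a consequence of \eqref{eq:residual} and \eqref{eq:step_bunds_algbredies}).

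Assuming $r_k>0$ for all $k$, the core of the argument is to convert the estimate of Lemma \ref{lemma:bredies_rate} into linear growth of $r_k^{-(\beta-1)}$. Using the monotonicity $r_{k+1}\le r_k$ and the integral representation,
\begin{equation*}
\frac{1}{r_{k+1}^{\beta-1}}-\frac{1}{r_k^{\beta-1}}=\int_{r_{k+1}}^{r_k}\frac{\beta-1}{t^{\beta}}\,\diff t\ \ge\ \frac{(\beta-1)(r_k-r_{k+1})}{r_k^{\beta}}\ \ge\ (\beta-1)\,c_0,
\end{equation*}
where the last inequality is precisely Lemma \ref{lemma:bredies_rate}. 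Iterating from the first index gives $r_k^{-(\beta-1)}\ge r_1^{-(\beta-1)}+(k-1)(\beta-1)c_0\ge (k-1)(\beta-1)c_0$, hence $r_k\le\big((\beta-1)c_0\big)^{-1/(\beta-1)}(k-1)^{-1/(\beta-1)}$ for $k\ge 2$.

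It then remains to rewrite the exponent and absorb constants. Since $\frac{1}{\beta-1}=p_--1$, the bound reads $r_k\le C\,(k-1)^{-(p_--1)}$ for $k\ge 2$ with $C=\big((\beta-1)c_0\big)^{-(p_--1)}$; using $(k-1)\ge k/2$ for $k\ge 2$ yields $r_k\le C\,2^{p_--1}\,k^{-(p_--1)}$, and taking $\eta:=\max\{C\,2^{p_--1},\,r_1\}$ also covers $k=1$, giving \eqref{eq:conv_Bredies}. I do not expect a genuine obstacle past Lemma \ref{lemma:bredies_rate}: everything here is the elementary recursion lemma. The only points needing a little care are the degenerate case $r_{k_0}=0$ and verifying $\beta-1\ge 1$ so that $t\mapsto t^{-\beta}$ behaves as used, both of which follow immediately from $1<p_-\le 2$.
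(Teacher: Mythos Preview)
Your argument is correct and is precisely the elementary recursion lemma that the paper defers to \cite[Proposition~4]{Bredies2008}; the paper itself gives no details beyond that citation, and your telescoping of $r_k^{-(\beta-1)}$ via the integral bound is the standard way to unpack it. One cosmetic remark: the condition $\beta-1\ge 1$ is not actually needed for the integral estimate---monotonicity of $t\mapsto t^{-\beta}$ requires only $\beta>0$, and the antiderivative formula only $\beta\neq 1$---so your caveat at the end is harmless but unnecessary.
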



Such convergence rate is related to the smoothness of the space $\lpvarb$ considered and, in particular, to the infimum exponent value $p_-$ appearing in the analytical proof of Lemma \ref{lemma:bredies_rate}. It can thus be read as the worst-case convergence speed. It is highly expected that such convergence result can be improved and that, practically, faster convergence could be achieved, as our numerical tests will show.  

We now provide a result relative to the convergence of the sequence $(x_k)_k$ itself.
\begin{prop}
	If $(\xk)_k$ is bounded, then the sequence $(\xk)_k$ has at least one accumulation point. All accumulation points belong to $\text{Sol(P)}$. If $\text{Sol(P)}=\{\bar{x}\}$, then $(\xk)_k$ converges weakly to $\bar{x}$.
\end{prop}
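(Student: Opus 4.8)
The plan is to exploit the reflexivity of $\lpvar$ established in Theorem \ref{lpvar:reflexivity} together with the descent established in Proposition \ref{prop:bredies_descent} and the characterisation of $\text{Sol(P)}$ as fixed points of the iteration. Since $(\xk)_k$ is bounded and $\lpvar$ is reflexive, the Eberlein--\v{S}mulian theorem guarantees that $(\xk)_k$ admits a weakly convergent subsequence $x^{k_j}\rightharpoonup \bar x$, so at least one accumulation point exists. The task is then to show that any such weak accumulation point $\bar x$ lies in $\text{Sol(P)}$, and finally to upgrade to weak convergence of the full sequence under the uniqueness hypothesis.

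First I would show $\bar x\in\text{Sol(P)}$. The key is that $\phi=f+g$ is weakly lower semicontinuous on $\lpvar$ (both $f$ and $g$ are proper, convex and l.s.c., hence weakly l.s.c.), so $\phi(\bar x)\le \liminf_j \phi(x^{k_j})$. On the other hand, by Proposition \ref{prop:convergenceratebredies} (or already by \eqref{eq:residual} and the summability argument behind Lemma \ref{lemma:bredies_rate}) we have $r_k=\phi(\xk)-\bar\phi\to 0$, so $\lim_k \phi(\xk)=\bar\phi$. Combining, $\phi(\bar x)\le \bar\phi$, which forces $\phi(\bar x)=\bar\phi$, i.e.\ $\bar x\in\text{Sol(P)}$. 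This is the cleanest route and does not even require passing to the limit in the fixed-point inclusion; it only uses weak l.s.c.\ of $\phi$ and the convergence of function values. (An alternative, if one wants to avoid invoking Proposition \ref{prop:convergenceratebredies}, is to note from \eqref{eq:residual} and $\tau_k\ge\bar\tau$ that $D(\xk)\to 0$ and hence $\rhopvar(\xk-\xkk)\to 0$ by \eqref{eq:ass2consbredies}, then pass to the limit along the subsequence in the subdifferential inclusion $\frac{\jrhobar(\xk-\xkk)}{\tau_k}-\nabla f(\xk)\in\partial g(\xkk)$ using demiclosedness of $\partial g$ and continuity of $\nabla f$; but the function-value argument is shorter.)

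For the last assertion, suppose $\text{Sol(P)}=\{\bar x\}$. Every weakly convergent subsequence of the bounded sequence $(\xk)_k$ has its limit in $\text{Sol(P)}$ by the previous step, hence equal to $\bar x$. A standard subsequence argument then gives $\xk\rightharpoonup\bar x$: if not, there would be a weak neighbourhood $U$ of $\bar x$ and a subsequence staying outside $U$; by boundedness and reflexivity that subsequence would have a further subsequence converging weakly to some point of $\text{Sol(P)}=\{\bar x\}$, contradicting that it avoids $U$.

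The main obstacle is essentially bookkeeping rather than conceptual: one must be careful that $\phi$ is indeed weakly l.s.c.\ on all of $\lpvar$ and not merely on bounded sets, but this follows because convex l.s.c.\ functions on a Banach space are weakly l.s.c.\ (Mazur's lemma / the fact that their epigraphs are convex and closed, hence weakly closed). The only genuinely $\lpvar$-specific ingredient is reflexivity (Theorem \ref{lpvar:reflexivity}), which underpins both the existence of weak accumulation points and the subsequence argument in the final step; everything else is a generic consequence of convexity and the already-established monotone decrease of $r_k$ to zero.
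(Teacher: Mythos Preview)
Your proposal is correct and complete. The paper does not supply its own proof for this proposition but simply refers to \cite{Bredies2008}, Proposition~5 and \cite{GuanSong2015}, Proposition~3.4(iii); your argument---reflexivity of $\lpvar$, Eberlein--\v{S}mulian, weak lower semicontinuity of the convex l.s.c.\ functional $\phi$, the convergence $r_k\to 0$ from Proposition~\ref{prop:convergenceratebredies}, and the standard subsequence argument under uniqueness---is exactly the route those references take.
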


\begin{proof}
	See \cite[Proposition 5]{Bredies2008} and \cite[Proposition 3.4 (iii)]{GuanSong2015}.
\end{proof}

We conclude this section recalling  the definition of totally convex and $r$-convex functions. Under this further hypothesis, it is possible to show that the sequence of iterates defined by Algorithm \ref{alg_bredies} converges strongly to a solution of the minimization problem \eqref{eq:P} and, moreover, that \eqref{eq:P} has a unique solution.
\begin{definition}
Let $h:\lpvar \longrightarrow \R\cup\{+\infty\}$ be proper, convex and l.s.c. The functional $h$ is said to be totally convex in $\hat{u}\in\lpvar$ if  for all $\omega\in\partial h(\hat{u})$ and for each $(u^n)_n$ such that 
\begin{equation*}
h(u^n)-h(\hat{u})-\langle \omega, u^n-\hat{u}\rangle \rightarrow0,
\end{equation*}
there holds
$
\|u^n-\hat{u}\|\pnorm\rightarrow0,
$
for $n\to+\infty$. We say that $h$ is totally convex if it is totally convex in $\hat{u}$ for all $\hat{u}\in\lpvar$.

Similarly, $h$ is convex of power-type $r$ (or $r$-convex) in $\hat{u}\in\lpvar$ with $r\ge2$ if for all $M>0$ and $\omega\in\partial h(\hat{u})$ there exists $\beta>0$ such that for all $\|u-\hat{u}\|\pnorm\le M$ 
\begin{equation*}
h(u)-h(\hat{u})-\langle \omega, u-\hat{u}\rangle \ge \beta \|u-\hat{u}\|\pnorm^r.
\end{equation*}
We say that $h$ is convex of power-type $r$ if it is convex of power-type $r$ in $\hat{u}$ for all $\hat{u}\in\lpvar$.
\end{definition}
\begin{prop}
If $f$ or $g$ is totally convex or $r$-convex with $r\ge2$, the solution of problem \eqref{eq:P} is unique. Denoting it by $\bar{x}\in\lpvar$, we further have that under the same conditions the sequence $(\xk)_k$ defined by \eqref{bredies} converges strongly to $\bar{x}$.
\end{prop}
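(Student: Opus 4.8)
The plan is to split the statement into its two assertions — uniqueness of the minimiser of \eqref{eq:P}, and strong convergence of $(\xk)_k$ — and to treat the uniqueness part first, since it is the shorter one. For uniqueness, suppose $\bar x_1,\bar x_2\in\text{Sol(P)}$ are distinct. Since $\phi=f+g$ is convex and each of $f,g$ is convex, $\phi$ is constant equal to $\bar\phi$ on the segment $[\bar x_1,\bar x_2]$, so in particular the convex function (say) $f$ is affine on that segment. If $f$ is totally convex at $\bar x_1$ (or $r$-convex there), pick $\omega\in\partial f(\bar x_1)$, note $-\omega\in\partial g(\bar x_1)$ by optimality, and use that $f$ being affine along the segment forces $f(\bar x_2)-f(\bar x_1)-\langle\omega,\bar x_2-\bar x_1\rangle=0$; total convexity then gives $\|\bar x_2-\bar x_1\|\pnorm=0$, a contradiction. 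The same argument works with $g$ in place of $f$. (One has to be mildly careful: what is literally constant is $\phi$; one deduces $f$ and $g$ are each affine on the segment because the sum of two convex functions is affine on a segment iff each summand is.)

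For the strong convergence, I would build on the results already established for Algorithm \ref{alg_bredies}: by the previous propositions $(\xk)_k$ is bounded, $r_k=\phi(\xk)-\bar\phi\downarrow 0$, and every weak accumulation point lies in $\text{Sol(P)}=\{\bar x\}$; hence $\xk\rightharpoonup\bar x$. The key additional ingredient is the Bregman-type descent estimate \eqref{primoboundbredies}, which with $u=\bar x$ and the chain of inequalities already used in the proof of Lemma \ref{lemma:bredies_rate} gives
\begin{equation*}
\phi(\xk)-\phi(\bar x)\;\ge\; g(\xk)-g(\bar x)+\langle\nabla f(\xk),\xk-\bar x\rangle\;\ge\;\langle\nabla f(\xk)+\bar\omega,\xk-\bar x\rangle\;\ge\;0,
\end{equation*}
where $\bar\omega=-\nabla f(\bar x)\in\partial g(\bar x)$; more precisely I would isolate the quantity
\begin{equation*}
E_k\;:=\;f(\xk)-f(\bar x)-\langle\nabla f(\bar x),\xk-\bar x\rangle\;+\;g(\xk)-g(\bar x)-\langle\bar\omega,\xk-\bar x\rangle\;=\;r_k-\langle\nabla f(\bar x)+\bar\omega,\xk-\bar x\rangle,
\end{equation*}
and observe that, since $\nabla f(\bar x)+\bar\omega=0$, in fact $E_k=r_k\to0$. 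Now $E_k$ is precisely the sum of the Bregman distances of $f$ and of $g$ between $\xk$ and $\bar x$; so if $f$ (resp.\ $g$) is totally convex at $\bar x$, the fact that its Bregman distance $f(\xk)-f(\bar x)-\langle\nabla f(\bar x),\xk-\bar x\rangle$ tends to $0$ — which follows from $E_k\to0$ together with non-negativity of the other Bregman term — yields $\|\xk-\bar x\|\pnorm\to0$ directly from the definition of total convexity (with $\hat u=\bar x$, $\omega=\nabla f(\bar x)$ resp.\ $\bar\omega$, $u^n=\xk$). If instead $f$ or $g$ is $r$-convex at $\bar x$, then boundedness of $(\xk)_k$ supplies the constant $M$ in the definition, and one gets the quantitative bound $\beta\|\xk-\bar x\|\pnorm^r\le E_k=r_k$, which combined with the rate $r_k\le\eta k^{-(p_--1)}$ of Proposition \ref{prop:convergenceratebredies} even gives an explicit rate $\|\xk-\bar x\|\pnorm\le(\eta/\beta)^{1/r}k^{-(p_--1)/r}$.

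The main obstacle I anticipate is bookkeeping rather than conceptual: one must be careful that the hypothesis is "totally convex" or "$r$-convex" \emph{at a point} (the relevant point being $\bar x$), and that $\nabla f(\bar x)\in\partial f(\bar x)$ and $\bar\omega\in\partial g(\bar x)$ are the subgradients to plug into those definitions, with the crucial cancellation $\nabla f(\bar x)+\bar\omega=0$ turning the combined Bregman term into exactly $r_k$. A secondary point of care is to extract the convergence-to-zero of a \emph{single} Bregman term from convergence-to-zero of the \emph{sum}: this is immediate because each Bregman term is $\ge0$ by convexity, so $0\le f(\xk)-f(\bar x)-\langle\nabla f(\bar x),\xk-\bar x\rangle\le E_k\to0$. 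Once these are set up, invoking the definition of total convexity (or $r$-convexity) closes the argument; I would cite the analogous reasoning in \cite{GuanSong2015} for the structure of this last step.
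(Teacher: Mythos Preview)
Your proposal is correct and follows essentially the same route as the paper. For strong convergence, your quantity $E_k$ is exactly the paper's device: the paper sets $R(x):=f(x)-f(\bar x)-\langle\nabla f(\bar x),x-\bar x\rangle$ (i.e.\ your first Bregman term), observes $r_k\ge R(\xk)$ via the subgradient inequality for $g$ at $\bar x$ with $\bar\omega=-\nabla f(\bar x)$ --- which is precisely your observation that $E_k=r_k$ together with non-negativity of the $g$-Bregman term --- and then invokes Proposition~\ref{prop:convergenceratebredies} and total convexity exactly as you do. Your treatment of the $r$-convex case, with the explicit rate, is also what the paper has in mind (it simply remarks that $r$-convexity implies total convexity and does not spell out the rate).

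Two small remarks. First, your uniqueness argument via ``$\phi$ constant on the segment $\Rightarrow$ $f,g$ affine there'' is correct but more elaborate than needed: the paper just applies the same inequality $\phi(x)-\phi(\bar x)\ge R(x)\ge 0$ at a second minimiser $\tilde x$ to force $R(\tilde x)=0$, contradicting total convexity. Second, be careful with the sentence ``by the previous propositions $(\xk)_k$ is bounded'': the preceding results \emph{assume} boundedness rather than establish it, and the paper's own proof inherits this standing hypothesis through its appeal to Proposition~\ref{prop:convergenceratebredies}. Your weak-convergence detour ($\xk\rightharpoonup\bar x$) is harmless but unnecessary --- the paper goes directly from $R(\xk)\to 0$ to $\|\xk-\bar x\|\pnorm\to 0$.
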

\begin{proof}
First, note that $r$-convexity implies total convexity. Then, focus on the case that $f$ is totally convex. 
Suppose now by contradiction that there exists $\tilde{x}\in\text{Sol(P)}$ with $\tilde{x}\not=\bar{x}$. Thus $\|\tilde{x}-\bar{x}\|\pnorm>0$. By defining
\begin{equation}  \label{eq:subdiff_R}
R(x):=f(x)-f(\bar{x})-\langle \nabla f(\bar{x}), x-\bar{x}\rangle,
\end{equation}
we have that $R(x)\ge0$ for all $x\in\lpvar$. Moreover, by the optimality of $\bar{x}$ and the subgradient inequality, there holds  $\phi(x)- \phi(\bar{x})\ge R(x)$ for all $x\in\lpvar$. Choosing $x=\tilde{x}$ , we thus get 
$
0=\phi(\tilde{x})-\phi(\bar{x})\ge R(\tilde{x})\ge0,
$
whence $R(\tilde{x})=0$. Taking now $u^n=\tilde{x}$ for all $n\geq 1$, we find a contradiction as the total convexity property is violated, whence we deduce $\tilde{x}=\bar{x}$.

To complete the proof, let us consider \eqref{eq:subdiff_R} once again. By optimality of $\bar{x}$ and thanks to the subgradient inequality, there holds $r_k\ge R(\xk)$, whence, by Proposition \ref{prop:convergenceratebredies}, we deduce that $R(\xk)\le \eta\frac{1}{k^{p_--1}}$. Letting now $k\to+\infty$ we thus infer that $R(\xk)\rightarrow 0$ and, by the total convexity of $f$, that $\|\xk-\bar{x}\|\pnorm\rightarrow0$, which completes the proof.

In the case that $g$ is totally convex, the proof is analogous by defining $R(x):=g(x)-g(\bar{x})+\langle \nabla f(\bar{x}), x-\bar{x}\rangle$.
\end{proof}

\section{A Bregmanized modular-proximal gradient algorithm}
\label{sec:Guan_Song_lpvar}

In this section, we introduce a different modular-proximal gradient algorithm solving \eqref{eq:P} where the proximal step is defined in terms of a modular Bregman-like distance. Our study is here inspired by the analysis carried out in \cite{GuanSong2015} where an analogous algorithm is studied for a general Banach space $\mathcal{X}$. Similarly as above, we start this section by stating the required assumptions.  
\begin{ass}
\label{ass1}
$\nabla f : \lpvar \longrightarrow \lpvar^*$ is $(\textit{\texttt{p}}-1)$H{\"o}lder-continuous with $1<\textit{\texttt{p}}\le2$ with constant $K$. 
\end{ass}
\begin{ass}
\label{ass2}
There exists $c>0$ such that for all $u,v\in\lpvar$
\[
\langle \jrhobar(u)-\jrhobar(v), u-v\rangle \ge c \max \left\{ \|u-v\|\pnorm^\textit{\texttt{p}}, \|\jrhobar(u)-\jrhobar(v)\|\dualnorm^{\textit{\texttt{p}}'}\right\} 
\]
where $\textit{\texttt{p}}'$ is the H\"{o}lder-conjugate of $\textit{\texttt{p}}$, i.e. $\frac{1}{\textit{\texttt{p}}}+\frac{1}{\textit{\texttt{p}}'}=1$.
\end{ass}
\begin{algorithm}[h!]
\caption{Bregmanized modular-proximal gradient algorithm in $\lpvar$ spaces}
\label{alg_guansong}
\small{
\textbf{Parameters:} $\{\tau_k\}_k$ s.t. \begin{equation}
0<\Bar{\tau}\le \tau_k \le \frac{\textit{\texttt{p}}c(1-\delta)}{K} \qquad \text{with} \ 0<\delta<1.
\end{equation}
\textbf{Initialization:} Start with $x^0\in\lpvar$.\\
{
\textsc{FOR $k=0,1,\ldots$ REPEAT}}
\begin{itemize}
        \item[] Compute the next iterate as: 
        \begin{equation}
    \xkk = \argmin_{u\in\lpvar} \rhobarpvar(u)-\langle \jrhobar(\xk), u\rangle + \tau_k \langle \nabla f(\xk),u\rangle+\tau_k g(u)
    \label{alg1}
\end{equation}
    \end{itemize}
{\textsc{UNTIL}} convergence
}
\end{algorithm}

The pseudocode of the proposed algorithm is reported in Algorithm \ref{alg_guansong}. 

Assumption \ref{ass2} links the geometrical properties of the space $\lpvar$ with the H{\"o}lder smoothness properties of $f$. It has to be interpreted as a sufficient compatibility condition between the ambient space $\lpvar$ and the function $f$ for achieving the desired convergence result. We will comment more on the practical verifiability of this condition in the following sections.

The optimization problem \eqref{alg_guansong} characterizing Algorithm \ref{alg_guansong} can be linked to suitably defined Bregman distances, see, e.g., \cite{burger2015bregman,benning2021} for more details. We recall in the following their definition.

\begin{definition}
Given $h:\lpvar\longrightarrow \R\cup\{+\infty\}$ smooth, convex, proper, and l.s.c., the Bregman distance between $x,y\in\lpvar$ with respect to $h$ is defined by 
\[
\mathcal{D}_h(x,y):=h(x)-h(y)-\langle \nabla h(y),x-y\rangle \qquad \forall x,y\in\lpvar.
\]
\end{definition}
We now observe that the updating rule in Algorithm \ref{alg_guansong} can be equivalently written in terms of the Bregman distance  associated to $\rhobarpvar$ as follows:
\begin{equation}
\xkk \in \argmin_{u\in\lpvar} \dbreg(u,\xk) + \tau_k \langle \nabla f(\xk),u\rangle+\tau_k g(u),
\label{alg2}
\end{equation}
since the terms which are constant with respect to $u$ can trivially be neglected.

We report now the analogue of Proposition \ref{prop:bredies1}, omitting the proof since the reasoning is similar as to the proof of Proposition \ref{prop:bredies1}. It shows that the minimization problem \eqref{alg1} has a unique solution at each iteration, and thus $\xkk$ is well defined.
\begin{prop}
The problem 
\begin{equation*}
\argmin_{u\in\lpvar} \rhobarpvar(u)-\langle \jrhobar(x), u\rangle + \tau \langle v^*,u\rangle+\tau g(u)
\end{equation*}
has a unique solution for each $x\in\lpvar$, $v^*\in\lpvar^*$ and $\tau>0$.
\end{prop}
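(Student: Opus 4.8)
The plan is to mirror the proof of Proposition~\ref{prop:bredies1}, establishing that the objective $\Psi(u):=\rhopvar(u)-\langle \jrho(x),u\rangle+\tau\langle v^*,u\rangle+\tau g(u)$ is proper, lower semi-continuous, strictly convex, and coercive on the reflexive Banach space $\lpvar$; existence then follows from the direct method and uniqueness from strict convexity. First I would address properness and lower semi-continuity: $\rhopvar$ is convex and continuous (indeed finite on all of $\lpvar$, cf. the discussion following Proposition~\ref{prop_norma_modular_radici}), the linear terms $-\langle\jrho(x),\cdot\rangle$ and $\tau\langle v^*,\cdot\rangle$ are continuous, and $g$ is proper, l.s.c.\ and convex by the standing hypotheses on \eqref{eq:P}; so $\Psi$ inherits these properties.

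Next I would prove strict convexity: $\rhopvar(u)=\int_\Omega |u(t)|^{p(t)}\,\diff t$ is strictly convex because $t\mapsto |t|^{p(s)}$ is strictly convex for every $s$ (since $p(s)\ge p_->1$), and adding affine terms plus the convex $g$ preserves strict convexity. Coercivity is the main point and the natural obstacle, since unlike in Proposition~\ref{prop:bredies1} the ``modular-distance'' term is now $\rhopvar(u)$ centered at the origin rather than $\rhobarpvar(u-x)$, but the Bregman-type correction $-\langle\jrho(x),u\rangle$ must be controlled. I would argue as follows: for $\|u\|\pnorm>1$, Proposition~\ref{prop_norma_modular_radici}\eqref{eq:normamaggioredi1} gives $\rhopvar(u)\ge \|u\|\pnorm^{p_-}$; the term $-\langle\jrho(x),u\rangle+\tau\langle v^*,u\rangle$ is bounded below by $-(\|\jrho(x)\|\dualnorm+\tau\|v^*\|\dualnorm)\|u\|\pnorm$ by Cauchy--Schwarz (Hölder) in the duality pairing; and, exactly as in Proposition~\ref{prop:bredies1}, using a point $\bar x\in\mathrm{Sol}(P)$ with $\bar\omega:=-\nabla f(\bar x)\in\partial g(\bar x)$ and the subgradient inequality, $\tau g(u)\ge \tau g(\bar x)+\tau\langle\bar\omega,u-\bar x\rangle\ge -C\|u\|\pnorm - C'$ for constants depending only on $\bar x$. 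Collecting these, $\Psi(u)\ge \|u\|\pnorm\big(\|u\|\pnorm^{p_--1}-C''\big)-C'''$, and since $p_->1$ the bracket tends to $+\infty$ as $\|u\|\pnorm\to+\infty$, giving coercivity.

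Finally, invoking reflexivity of $\lpvar$ (Theorem~\ref{lpvar:reflexivity}), a minimizing sequence admits a weakly convergent subsequence, and weak lower semi-continuity of the convex l.s.c.\ functional $\Psi$ yields a minimizer; strict convexity makes it unique. I expect the only genuinely delicate step to be the coercivity estimate, specifically ensuring that the linear perturbation $-\langle\jrho(x),u\rangle$ (which is $\|u\|\pnorm$-homogeneous) is dominated by the $\|u\|\pnorm^{p_-}$ growth of $\rhopvar$; this works precisely because $p_->1$. Everything else is routine and parallels Proposition~\ref{prop:bredies1} almost verbatim, so I would likely write the proof compactly, referring back to that argument for the parts on $g$ and on the direct method.
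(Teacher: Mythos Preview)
Your proposal is correct and follows essentially the same route as the paper: bound $\rhopvar(u)\ge\|u\|\pnorm^{p_-}$ via Proposition~\ref{prop_norma_modular_radici}\eqref{eq:normamaggioredi1}, control the linear terms (including $-\langle\jrho(x),\cdot\rangle$) by the duality pairing, handle $g$ through the subgradient inequality at a point $\bar x\in\mathrm{Sol(P)}$, and conclude by coercivity, reflexivity, and strict convexity since $p_->1$. The paper's proof is just a slightly more compact version of what you wrote, collecting all linear pieces into a single $\omega\in\lpvar^*$ before applying Cauchy--Schwarz.
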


In order to interpret \eqref{alg1} as a fixed-point iterative scheme, it is useful to introduce the following notion which shows analogies to the standard scheme of the Moreau envelope (see, e.g., \cite[Chapter 12]{BauschkeCombettes2017}).

\begin{definition}
Given $h:\lpvar\longrightarrow \R\cup\{+\infty\}$ smooth, convex, proper, l.s.c., we define the Moreau-like envelope $e_h:\lpvar^*\longrightarrow\R$ and the modular-proximal mapping $\pi_h:\lpvar^*\longrightarrow\lpvar$ as follows
\begin{align}
&e_h(x^*):=\inf_{u\in\lpvar}{h(u)+\Delta(x^*,u)}, \quad x^*\in\lpvar^* \notag\\
&\pi_h(x^*):=\argmin_{u\in\lpvar}{h(u)+\Delta(x^*,u)}, \quad x^*\in\lpvar^*
\label{eq:prox}
\end{align}
where $\Delta (\cdot,\cdot)$ denotes the Bregman-like distance associated to $\rhobarpvar$, that is $\Delta (x^*,u):= \rhobarpvar(u)-\langle x^*,u\rangle$.
\end{definition}
Note that the minimum in \eqref{eq:prox} is uniquely attained as $\rhobarpvar$ is a strictly convex function. Moreover, the unique point $\pi_h(x^*)$ satisfies 
\begin{equation}
0\in\partial h(\pi_h(x^*))+\jrhobar(\pi_h(x^*))-x^* \iff x^*\in \partial h(\pi_h(x^*))+\jrhobar(\pi_h(x^*)).
\label{eq:dopodefprox}
\end{equation}

\begin{prop}
For any $\gamma>0$, there holds $\bar{x}\in\text{Sol(P)}$ if and only if $\bar{x}=\pi_{\gamma g}\big(\jrhobar(\bar{x})-\gamma \nabla f(\bar{x})\big)$.
\end{prop}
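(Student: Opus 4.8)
The plan is to prove the equivalence by showing that both conditions reduce to the same subdifferential inclusion, namely the optimality condition $0\in\nabla f(\bar{x})+\partial g(\bar{x})$ that characterises $\text{Sol(P)}$ (recall $f$ is Gateaux differentiable and $g$ is proper, convex, l.s.c., so this is the Fermat condition for \eqref{eq:P}).

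First I would unfold the definition of the modular-proximal mapping $\pi_{\gamma g}$ applied to the point $x^*:=\jrho(\bar{x})-\gamma\nabla f(\bar{x})\in\lpvar^*$. By \eqref{eq:dopodefprox} with $h=\gamma g$, the point $w:=\pi_{\gamma g}(x^*)$ is the unique element of $\lpvar$ satisfying
\begin{equation*}
x^*\in\partial(\gamma g)(w)+\jrho(w),
\end{equation*}
i.e.
\begin{equation*}
\jrho(\bar{x})-\gamma\nabla f(\bar{x})\in\gamma\,\partial g(w)+\jrho(w).
\end{equation*}
Then the claimed fixed-point identity $\bar{x}=\pi_{\gamma g}(x^*)$ holds precisely when $w=\bar{x}$ is admissible in the above inclusion, that is, when
\begin{equation*}
\jrho(\bar{x})-\gamma\nabla f(\bar{x})\in\gamma\,\partial g(\bar{x})+\jrho(\bar{x}),
\end{equation*}
which, upon cancelling $\jrho(\bar{x})$ from both sides and dividing by $\gamma>0$, is equivalent to $-\nabla f(\bar{x})\in\partial g(\bar{x})$, i.e. to $0\in\nabla f(\bar{x})+\partial g(\bar{x})$.

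For the two directions: if $\bar{x}\in\text{Sol(P)}$, then $0\in\nabla f(\bar{x})+\partial g(\bar{x})$, so $\bar{x}$ satisfies the inclusion defining $\pi_{\gamma g}(x^*)$; since that inclusion has a unique solution (the minimiser in \eqref{eq:prox} is uniquely attained because $\rhopvar$ is strictly convex), we conclude $\bar{x}=\pi_{\gamma g}(x^*)$. Conversely, if $\bar{x}=\pi_{\gamma g}(\jrho(\bar{x})-\gamma\nabla f(\bar{x}))$, then by \eqref{eq:dopodefprox} the point $\bar{x}$ satisfies $\jrho(\bar{x})-\gamma\nabla f(\bar{x})\in\gamma\,\partial g(\bar{x})+\jrho(\bar{x})$; cancelling $\jrho(\bar{x})$ and dividing by $\gamma$ gives $-\nabla f(\bar{x})\in\partial g(\bar{x})$, hence $0\in\nabla f(\bar{x})+\partial g(\bar{x})$, which is exactly the optimality condition for \eqref{eq:P}, so $\bar{x}\in\text{Sol(P)}$.

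I do not anticipate a genuine obstacle here: the argument is essentially a bookkeeping of subdifferential inclusions, and the only subtle point is making sure the cancellation of the single-valued term $\jrho(\bar{x})$ is legitimate — it is, since $\jrho$ is a well-defined single-valued operator (Proposition \ref{prop_jrho_jrhobar}) and appears additively on both sides — and that uniqueness in \eqref{eq:prox} is invoked correctly to turn ``$\bar{x}$ solves the inclusion'' into ``$\bar{x}=\pi_{\gamma g}(\cdot)$''. One should also note that $\partial(\gamma g)=\gamma\,\partial g$ for $\gamma>0$, which is standard. The statement is the exact analog of the classical characterisation of minimisers as fixed points of the proximal-gradient operator, transported to the $\lpvar$ setting with the modular $\rhopvar$ playing the role of $\tfrac12\|\cdot\|^2$.
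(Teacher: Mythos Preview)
Your proof is correct and follows essentially the same approach as the paper: both reduce the fixed-point identity to the optimality inclusion $0\in\nabla f(\bar{x})+\partial g(\bar{x})$ via \eqref{eq:dopodefprox}, by adding and subtracting $\jrho(\bar{x})$ and scaling by $\gamma$. Your version is slightly more explicit in separating the two directions and in invoking uniqueness of the modular-proximal point, but the underlying argument is the same.
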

\begin{proof}
Since $\bar{x}$ solves \eqref{eq:P}, we have
$$
0 \in \nabla f(\bar{x}) + \partial g (\bar{x}) \quad \iff \quad 
0  \in \gamma \nabla f(\bar{x}) - \jrhobar(\bar{x}) +\jrhobar(\bar{x}) + \gamma \partial g (\bar{x})\,,$$
that is
$\jrhobar(\bar{x})-\gamma \nabla f(\bar{x})  \in \jrhobar(\bar{x}) + \gamma \partial g (\bar{x})\,.$
By \eqref{eq:dopodefprox}, we thus deduce as required that
$\bar{x}=\pi_{\gamma g}\big(\jrhobar(\bar{x})-\gamma \nabla f(\bar{x})\big)$.
\end{proof}

\begin{oss} \label{oss}
The iteration \eqref{alg1} can be equivalently formulated as 
\begin{equation*}
\begin{aligned}
\xkk & = \jrhobar(\xk)-\tau_k\nabla f(\xk)\in \jrhobar(\xkk)+\tau_k\partial g(\xkk) \iff\\
\xkk & = \pi_{\tau_k g}\big( \jrhobar(\xk)-\tau_k\nabla f(\xk)\big)\,.
\end{aligned}
\end{equation*}
This shows that  \eqref{alg1} can be read as a fixed-point iteration scheme. 
\end{oss}

Similarly as in Proposition \ref{prop:convergenceratebredies}, it is possible to prove the convergence in function values for the iterates of  Algorithm \ref{alg_guansong} and achieve a convergence rate. Here, we omit the proof, as it follows verbatim the one in \cite{GuanSong2015} which is itself inspired by \cite{Bredies2008}.
\begin{prop}
\label{prop:convergencerate}
If $(\xk)_k$ is bounded, then the following convergence rate in function values can be found for the iterates of Algorithm \ref{alg_guansong} 
\begin{equation}
\label{eq:conv_GS}
r_k \le \eta \frac{1}{k^{\textit{\texttt{p}}-1}}.
\end{equation}
\end{prop}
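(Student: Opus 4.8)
The plan is to mirror, in the modular-function setting, the chain of estimates leading from the descent inequality to the $O(1/k^{\texttt{p}-1})$ rate, exactly as was done in Proposition~\ref{prop:convergenceratebredies} but with the Bregman-like distance $\Delta(\cdot,\cdot)$ and the modular $\rhopvar$ playing the roles previously played by $\rhobarpvar$. First I would record the optimality condition for \eqref{alg1}, namely $\jrho(\xk)-\tau_k\nabla f(\xk)\in\jrho(\xkk)+\tau_k\partial g(\xkk)$ (cf.\ Remark~\ref{oss}), which gives the subgradient inequality $\langle \tau_k^{-1}(\jrho(\xk)-\jrho(\xkk))-\nabla f(\xk),\,u-\xkk\rangle\le g(u)-g(\xkk)$ for all $u\in\lpvar$. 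Specializing to $u=\xk$ and using Assumption~\ref{ass2} with $u\mapsto\xk$, $v\mapsto\xkk$ to lower bound $\langle\jrho(\xk)-\jrho(\xkk),\xk-\xkk\rangle$ by $c\max\{\|\xk-\xkk\|\pnorm^{\texttt{p}},\|\jrho(\xk)-\jrho(\xkk)\|\dualnorm^{\texttt{p}'}\}$ yields the analog of \eqref{eq:ass2consbredies}, i.e.\ $c\|\xk-\xkk\|\pnorm^{\texttt{p}}\le\tau_k D(\xk)$ with $D(\xk):=g(\xk)-g(\xkk)+\langle\nabla f(\xk),\xk-\xkk\rangle$.

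Second, I would reproduce the descent estimate: writing $\phi(\xk)-\phi(\xkk)=D(\xk)+f(\xk)-f(\xkk)-\langle\nabla f(\xk),\xk-\xkk\rangle$ and bounding the trailing three terms via the integral representation $\int_0^1\langle\nabla f(\xk+t(\xkk-\xk))-\nabla f(\xk),\xk-\xkk\rangle\,\diff t$ together with the $(\texttt{p}-1)$-Hölder continuity (Assumption~\ref{ass1}), one gets $|f(\xk)-f(\xkk)-\langle\nabla f(\xk),\xk-\xkk\rangle|\le\frac{K}{\texttt{p}}\|\xk-\xkk\|\pnorm^{\texttt{p}}\le\frac{K\tau_k}{\texttt{p}c}D(\xk)$, hence $\phi(\xkk)\le\phi(\xk)-(1-\frac{K\tau_k}{\texttt{p}c})D(\xk)$, so with the step-size bounds $r_k-r_{k+1}\ge\delta D(\xk)$ and $\tau_k\ge\bar\tau$. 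Third, I would bound $r_k$ from above: using $r_k\le g(\xk)-g(\bar x)+\langle\nabla f(\xk),\xk-\bar x\rangle$ and rewriting $\xk-\bar x=(\xk-\xkk)+(\xkk-\bar x)$ as in \eqref{eq:ineq1}, the subgradient inequality produces $r_k\le D(\xk)+\langle\tau_k^{-1}(\jrho(\xk)-\jrho(\xkk)),\xkk-\bar x\rangle\le D(\xk)+\tau_k^{-1}\|\jrho(\xk)-\jrho(\xkk)\|\dualnorm C_1$ with $C_1$ a bound for $\|\xkk-\bar x\|\pnorm$ (here boundedness of $(\xk)_k$ is used). The second half of Assumption~\ref{ass2} controls $\|\jrho(\xk)-\jrho(\xkk)\|\dualnorm^{\texttt{p}'}\le c^{-1}\langle\jrho(\xk)-\jrho(\xkk),\xk-\xkk\rangle\le c^{-1}\tau_k^{-1}\cdot\tau_k D(\xk)/c$... more carefully, $\|\jrho(\xk)-\jrho(\xkk)\|\dualnorm\le (c^{-1}\langle\jrho(\xk)-\jrho(\xkk),\xk-\xkk\rangle)^{1/\texttt{p}'}$ and $\langle\jrho(\xk)-\jrho(\xkk),\xk-\xkk\rangle\le\tau_k D(\xk)$ from the optimality condition with $u=\xk$, giving $\|\jrho(\xk)-\jrho(\xkk)\|\dualnorm\le(c^{-1}\tau_k D(\xk))^{1/\texttt{p}'}$. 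Plugging in and using $r_k-r_{k+1}\ge\delta D(\xk)$, $\tau_k\ge\bar\tau$, together with boundedness of $r_k-r_{k+1}$, one arrives at $\delta r_k\le C(r_k-r_{k+1})^{1/\texttt{p}'}$ for a constant $C>0$, i.e.\ $r_k-r_{k+1}\ge c_0 r_k^{\texttt{p}'}=c_0 r_k^{\texttt{p}/(\texttt{p}-1)}$.

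Finally, I would invoke the elementary sequence lemma (the same one used in \cite[Proposition~4]{Bredies2008}): if $(r_k)$ is a non-negative non-increasing sequence with $r_k-r_{k+1}\ge c_0 r_k^{1+\alpha}$ for $\alpha=1/(\texttt{p}-1)>0$, then $r_k\le \eta\, k^{-1/\alpha}=\eta\,k^{-(\texttt{p}-1)}$ for some $\eta>0$ depending on $c_0,\alpha,r_1$. Combining this with the remark in the statement that the proof "follows verbatim" that of \cite{GuanSong2015}, the only thing I would need to be careful about is that all modular/norm conversions (Propositions~\ref{prop_norma_modular_radici}, \ref{normamodular}, Lemma~\ref{normamodular2}, Proposition~\ref{prop:normaspassduale}) are applied in the same guarded way as in Lemma~\ref{lemma:bredies_rate}, and that the exponent bookkeeping produces $\texttt{p}'$ rather than $(p_-)'$ since here the decay is governed by the Hölder exponent of $\nabla f$ and the compatibility constant $c$ of Assumption~\ref{ass2}, not by the space's intrinsic exponent range. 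The main obstacle is precisely controlling $\|\jrho(\xk)-\jrho(\xkk)\|\dualnorm$ in terms of $D(\xk)$: in the Bredies-type Algorithm~\ref{alg_bredies} this was handled by the explicit identity $\rhoqvar(\sign(u)|u|^{p(\cdot)-1})=\rhopvar(u)$, whereas here it is handled abstractly through Assumption~\ref{ass2}, so one must verify that no $\max$/$\min$ direction is lost when extracting the $\texttt{p}'$-th root and that the resulting constant stays strictly positive.
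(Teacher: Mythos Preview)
Your proposal is correct and matches the approach the paper defers to: the paper omits the proof, stating it follows verbatim \cite{GuanSong2015} (which in turn adapts \cite[Proposition~4]{Bredies2008}), and your sketch reproduces precisely those steps---optimality condition, descent via Assumption~\ref{ass1}, control of $\|\jrho(\xk)-\jrho(\xkk)\|\dualnorm$ through Assumption~\ref{ass2}, and the final sequence lemma. Your closing caveat about modular/norm conversions is in fact unnecessary here, since Assumption~\ref{ass2} delivers the needed norm estimates directly and no appeal to Propositions~\ref{prop_norma_modular_radici}--\ref{prop:normaspassduale} is required.
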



It is interesting to compare the rate \eqref{eq:conv_GS} with the analogous one in \eqref{eq:conv_Bredies} obtained for Algorithm \ref{alg_bredies}. The dependence on the H\"{o}lder exponent $\texttt{p}$ in \eqref{eq:conv_GS} links the speed of convergence to the smoothness of the smooth function $f$ rather than to the one of the underlying $\lpvar$ space, which appears in \eqref{eq:conv_Bredies}. For reasonably smooth problems we thus expect Algorithm \ref{alg_guansong} to show better performance than Algorithm \ref{alg_bredies}.
\begin{oss}
Algorithm \ref{alg_guansong} can be equivalently formulated in terms of the modular function $\rhopvar$ maintaining the same convergence rate and convergence analysis. However, $\rhobarpvar$ intuitively better generalizes the norm $\frac{1}{\texttt{p}}\|\cdot\|_\mathcal{X}$ used in the definition of Guan and Song's algorithm \cite{GuanSong2015}. We underline, on the other hand, that Algorithm \ref{alg_bredies} has to be defined in terms of $\rhobarpvar$, since the convergence analysis cannot be carried out otherwise. In particular, in the proof of Lemma \ref{lemma1_bredies}, \eqref{eq:Jandmodular} is necessary and it requires the use of $\rhobarpvar$. 
\end{oss}
\section{Sparse reconstruction models: thresholding functions and primal-dual interpretation}
\label{sec:Applications}

In this section, we consider an exemplar sparse reconstruction model used in a variety of signal/image inverse problems and discuss the application of the algorithms presented in this paper for the computation of its numerical solution. Given a Lebesgue measurable map $p(\cdot):\Omega\longrightarrow(1,2]$, we consider a bounded linear operator $A:\lpvar\longrightarrow L^{p_+}(\Omega)$ and an observation $y\in L^{p_+}(\Omega), ~p_+\leq 2$. For $\lambda>0$, we aim to minimize the Tikhonov-like functional
\begin{equation*}
    \argmin_{x\in\lpvar}~ \frac{1}{p_+}\|Ax-y\|_{p_+}^{p_+} +\lambda \|x\|_1
    \label{var_model_1}
\end{equation*}
in the Banach space $\lpvar$, where $f(x)=\frac{1}{p_+}\|Ax-y\|_{p_+}^{p_+}$ is proper, convex and smooth, while $g(x)=\lambda \|x\|_1$ is proper, l.s.c, convex and nonsmooth. 

The gradient of $f$ can be computed as $\nabla f(x)= A^* \Jppiuppiu (Ax-y) \in \lpvardual.
$
In agreement with Assumption \ref{holder} for Algorithm \ref{alg_bredies} and with Assumption \ref{ass1} for Algorithm \ref{alg_guansong}, we now need to study the H{\"o}lder continuity of $\nabla f$. We claim that $\nabla f$ is $(p_+ -1)$-H\"{o}lder continuous. To show this, we recall the following useful definitions and properties.

\begin{definition} \cite{Schuster2012}
A Banach space $\mathcal{X}$ is called smooth of power-type $r$ (or $r$-smooth) with $r \in ( 1,2]$ if there exists a constant $C>0$ such that for all $u, v \in \mathcal{X}$ it holds that
$$
\frac{\|v\|_{\mathcal{X}}^{r}}{r}-\frac{\|u\|_{\mathcal{X}}^{r}}{r}-\langle \mathbf{J}_\mathcal{X}^{r}(u), v-u\rangle \le C\|v-u\|_{\mathcal{X}}^{r},
$$
where $\mathbf{J}_\mathcal{X}^{r}(\cdot)$ denotes the duality mapping between $\mathcal{X}$ and $\mathcal{X}^{*}$.
\end{definition}
\begin{prop} \cite{Bredies2008}
If a Banach space $\mathcal{X}$ is smooth of power-type $r$, then $\frac{\|\cdot\|_{\mathcal{X}}^{r}}{r}$ is continuously differentiable with derivative $\mathbf{J}_\mathcal{X}^{r}$, which is  $(r-1)$-H{\"o}lder continuous.

Furthermore, for $s \ge r$, the functional $\frac{\|\cdot\|_{\mathcal{X}}^{s}}{s}$ is continuously differentiable. Its derivative is given by $\mathbf{J}_\mathcal{Y}^{s}$, which is still $(r-1)$-H{\"o}lder continuous on each bounded subset of $\mathcal{X}$.
\end{prop}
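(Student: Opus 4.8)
The plan is to prove the two assertions separately: the first (the case $s=r$) directly from the power-type smoothness inequality, and the second (the case $s>r$) by reduction to the first through the chain rule. Set $h_r:=\frac1r\|\cdot\|_\mathcal{X}^r$. Since a space smooth of power-type $r$ is in particular smooth, Asplund's Theorem~\ref{Theo:Asplund} gives that $\mathbf{J}_\mathcal{X}^r$ is single-valued and equals $\partial h_r$; hence for all $u,v\in\mathcal{X}$ the subgradient inequality $h_r(v)-h_r(u)-\langle\mathbf{J}_\mathcal{X}^r(u),v-u\rangle\ge0$ holds, while the hypothesis supplies the matching upper bound $h_r(v)-h_r(u)-\langle\mathbf{J}_\mathcal{X}^r(u),v-u\rangle\le C\|v-u\|_\mathcal{X}^r$.

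For $s=r$, sandwiching the two inequalities above and using $r>1$ shows that the Bregman-type remainder $h_r(v)-h_r(u)-\langle\mathbf{J}_\mathcal{X}^r(u),v-u\rangle$ is nonnegative and $o(\|v-u\|_\mathcal{X})$, i.e. $h_r$ is Fréchet differentiable with $\nabla h_r=\mathbf{J}_\mathcal{X}^r$. To get $(r-1)$-Hölder continuity of $\mathbf{J}_\mathcal{X}^r$ I would fix $u,v\in\mathcal{X}$, a unit vector $w\in\mathcal{X}$ and $t>0$, and combine (i) the subgradient inequality at $v$ evaluated at $u+tw$, (ii) the power-type smoothness inequality at $u$ with increment $tw$, and (iii) the power-type inequality with $u$ and $v$ interchanged, to arrive at
\[
t\,\langle\mathbf{J}_\mathcal{X}^r(v)-\mathbf{J}_\mathcal{X}^r(u),w\rangle\le C\|u-v\|_\mathcal{X}^r+Ct^r .
\]
Dividing by $t$, minimising the right-hand side over $t>0$ (optimal choice $t\propto\|u-v\|_\mathcal{X}$) and taking the supremum over unit vectors $w$ yields $\|\mathbf{J}_\mathcal{X}^r(u)-\mathbf{J}_\mathcal{X}^r(v)\|_{\mathcal{X}^*}\le C'\|u-v\|_\mathcal{X}^{r-1}$, which is the first claim; continuity of $\nabla h_r$ is then automatic.

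For $s>r$ I would write $\frac1s\|x\|_\mathcal{X}^s=\phi(h_r(x))$ with $\phi(t):=\frac{r^{s/r}}{s}\,t^{s/r}$ on $[0,+\infty)$. As $s/r\ge1$, $\phi$ is $C^1$ on $[0,+\infty)$, so by the chain rule for Fréchet derivatives $\frac1s\|\cdot\|_\mathcal{X}^s$ is continuously differentiable with derivative $\phi'(h_r(x))\,\mathbf{J}_\mathcal{X}^r(x)=\|x\|_\mathcal{X}^{s-r}\,\mathbf{J}_\mathcal{X}^r(x)$; comparing its norm ($\|x\|_\mathcal{X}^{s-1}$) and its pairing with $x$ ($\|x\|_\mathcal{X}^s$) with Definition~\ref{def:map_dual} identifies it with $\mathbf{J}_\mathcal{X}^s(x)$. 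Finally, on a bounded set $B$ with $\|u\|_\mathcal{X},\|v\|_\mathcal{X}\le M$ and, say, $\|u\|_\mathcal{X}\ge\|v\|_\mathcal{X}$, I would use the splitting
\[
\mathbf{J}_\mathcal{X}^s(u)-\mathbf{J}_\mathcal{X}^s(v)=\|u\|_\mathcal{X}^{s-r}\bigl(\mathbf{J}_\mathcal{X}^r(u)-\mathbf{J}_\mathcal{X}^r(v)\bigr)+\bigl(\|u\|_\mathcal{X}^{s-r}-\|v\|_\mathcal{X}^{s-r}\bigr)\mathbf{J}_\mathcal{X}^r(v),
\]
bounding the first summand by a constant depending on $M$ times $\|u-v\|_\mathcal{X}^{r-1}$ using the base case, and reducing the second summand, via $\|\mathbf{J}_\mathcal{X}^r(v)\|_{\mathcal{X}^*}=\|v\|_\mathcal{X}^{r-1}$ and $\bigl|\,\|u\|_\mathcal{X}-\|v\|_\mathcal{X}\,\bigr|\le\|u-v\|_\mathcal{X}$, to the scalar bound $\sup\bigl\{\tfrac{(b^{s-r}-a^{s-r})a^{r-1}}{(b-a)^{r-1}}:0\le a<b\le M\bigr\}<+\infty$.

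The step I expect to require the most care is this final scalar estimate, and specifically the observation that one must retain the factor $a^{r-1}=\|v\|_\mathcal{X}^{r-1}$ rather than crudely bounding it by $M^{r-1}$: the crude bound would only give $(s-r)$-Hölder continuity of the second summand, which is strictly weaker than $(r-1)$-Hölder near the origin precisely when $s-r<r-1$. Retaining that factor, the supremum is seen to be finite by an elementary case distinction ($b-a\ge a$ versus $b-a<a$). Combining the bounds on the two summands then gives $\|\mathbf{J}_\mathcal{X}^s(u)-\mathbf{J}_\mathcal{X}^s(v)\|_{\mathcal{X}^*}\le C_B\|u-v\|_\mathcal{X}^{r-1}$ for every bounded $B$, which is the second claim.
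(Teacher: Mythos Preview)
The paper does not give a proof of this proposition: it is quoted as a result from \cite{Bredies2008} and used as a black box in Section~\ref{sec:Applications}. There is therefore no ``paper's own proof'' to compare against.

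Your argument is nonetheless sound and follows the standard route. The three-inequality combination you use to extract $(r-1)$-H\"older continuity of $\mathbf{J}_\mathcal{X}^r$ is the usual trick (it is essentially how this is done in \cite{Bredies2008} as well), and the chain-rule reduction for $s>r$ together with the product splitting of $\mathbf{J}_\mathcal{X}^s(u)-\mathbf{J}_\mathcal{X}^s(v)$ is correct. Your warning about the scalar estimate is exactly the right point: bounding $\|v\|_\mathcal{X}^{r-1}$ by $M^{r-1}$ would indeed fail when $s-r<r-1$, and your case distinction ($b-a\ge a$ versus $b-a<a$) does the job --- in the second case one can use that $t\mapsto t^{s-r}$ has derivative $(s-r)t^{s-r-1}$ and that $b-a<a$ forces $(b-a)^{2-r}\le a^{2-r}$, which combines with the remaining factor $a^{s-2}$ to give the bounded quantity $a^{s-r}$.
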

\begin{lemma} \cite{Schuster2012} 
Constant exponent Lebesgue spaces $L^r(\Omega)$ 
are $\min\{2,r\}$-smooth.
\end{lemma}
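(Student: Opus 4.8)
My plan is to treat the two regimes $1<r\le 2$ (where $\min\{2,r\}=r$) and $r\ge 2$ (where $\min\{2,r\}=2$) separately, reducing each to an elementary scalar inequality; the endpoint cases $r\in\{1,\infty\}$ are excluded since $L^r(\Omega)$ is not even smooth there. In both regimes I would use Asplund's Theorem \ref{Theo:Asplund}, which for $s>1$ identifies $\mathbf{J}^{s}_{L^r(\Omega)}$ with the Gateaux derivative of $\frac1s\|\cdot\|_r^s$; for $s=r$ this gives the explicit expression $\langle\mathbf{J}^{r}_{L^r(\Omega)}(u),h\rangle=\int_\Omega\sign(u)\,|u|^{r-1}h\,\diff t$, while for $s=2$ all I need is the subgradient inclusion $\mathbf{J}^{2}_{L^r(\Omega)}(u)\in\partial\big(\tfrac12\|\cdot\|_r^2\big)(u)$.

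For $1<r\le 2$, I would first observe that the scalar function $\psi_r(a):=|a+1|^r-|a|^r-r\,\sign(a)|a|^{r-1}$ is nonnegative (convexity of $t\mapsto|t|^r$), continuous on $\R$ (the points $a=0$ and $a=-1$ cause no trouble because $r>1$), and tends to $0$ as $a\to\pm\infty$ (a Taylor expansion yields a leading term of order $|a|^{r-2}$, which vanishes because $r<2$); hence $\psi_r$ is bounded by some $C_r<\infty$. Using positive homogeneity of degree $r$ in $(a,b)$ together with the symmetry under $(a,b)\mapsto(-a,-b)$, this gives the pointwise bound $|a+b|^r-|a|^r-r\,\sign(a)|a|^{r-1}b\le C_r|b|^r$ for all $a,b\in\R$. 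Applying it with $a=u(t)$, $b=h(t)$, integrating over $\Omega$ and dividing by $r$ produces, for all $u,h\in L^r(\Omega)$,
\[
\frac{\|u+h\|_r^r}{r}-\frac{\|u\|_r^r}{r}-\langle\mathbf{J}^{r}_{L^r(\Omega)}(u),h\rangle\le\frac{C_r}{r}\,\|h\|_r^r,
\]
which is exactly the definition of $r$-smoothness.

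For $r\ge 2$, I would instead derive the parallelogram-type inequality
\[
\|f+g\|_r^2+\|f-g\|_r^2\le 2\|f\|_r^2+2(r-1)\|g\|_r^2\qquad\forall\,f,g\in L^r(\Omega),
\]
starting from the sharp scalar inequality $\tfrac12\big(|a+b|^r+|a-b|^r\big)\le\big(a^2+(r-1)b^2\big)^{r/2}$, integrating over $\Omega$, applying Minkowski's inequality in $L^{r/2}(\Omega)$ (legitimate since $r/2\ge 1$) to $\big\||f|^2+(r-1)|g|^2\big\|_{r/2}$, and using the power-mean inequality $\big(\tfrac{x^2+y^2}{2}\big)^{1/2}\le\big(\tfrac{x^r+y^r}{2}\big)^{1/r}$. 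Then, with $f=u$ and $g=h$, I would combine this with the subgradient inequality $\tfrac12\|u-h\|_r^2\ge\tfrac12\|u\|_r^2-\langle\mathbf{J}^{2}_{L^r(\Omega)}(u),h\rangle$ to obtain
\[
\frac{\|u+h\|_r^2}{2}-\frac{\|u\|_r^2}{2}-\langle\mathbf{J}^{2}_{L^r(\Omega)}(u),h\rangle\le (r-1)\|h\|_r^2,
\]
i.e.\ $2$-smoothness with constant $r-1$.

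The genuinely non-routine part is the pair of scalar inequalities, and the main obstacle is the sharp two–point inequality $\tfrac12(|a+b|^r+|a-b|^r)\le(a^2+(r-1)b^2)^{r/2}$ for $r\ge 2$: by homogeneity it reduces to the case $b=1$, after which it is a single-variable estimate (the value $r-1$ being exactly the constant that makes the second-order terms agree), but checking it carefully is the one place where real work is required. Everything else is bookkeeping with convexity, the Hölder and Minkowski inequalities, and Asplund's theorem.
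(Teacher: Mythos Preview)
The paper does not supply its own proof of this lemma: it simply quotes the result from \cite{Schuster2012} and uses it as a black box. So there is no ``paper's proof'' to compare against.

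Your proposed argument is essentially correct and is in fact the standard route to this result. A couple of minor remarks. In the regime $1<r\le 2$, your claim that $\psi_r(a)\to 0$ as $|a|\to\infty$ only holds for $r<2$; at the endpoint $r=2$ one has $\psi_2\equiv 1$, which is still bounded, so the conclusion survives (and in any case $r=2$ is also covered by your second regime). In the regime $r\ge 2$, the chain ``pointwise two-point inequality $\Rightarrow$ integrate $\Rightarrow$ Minkowski in $L^{r/2}$ $\Rightarrow$ power-mean inequality $\Rightarrow$ parallelogram-type estimate $\Rightarrow$ combine with the subgradient inequality for $\tfrac12\|\cdot\|_r^2$'' is exactly the classical derivation of $2$-smoothness of $L^r$ with constant $r-1$; the sharp scalar bound $\tfrac12(|a+b|^r+|a-b|^r)\le(a^2+(r-1)b^2)^{r/2}$ you invoke is indeed the crux and is a known (though not entirely trivial) inequality. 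Your identification of this step as ``the one place where real work is required'' is accurate.
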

The space $L^{p_+}(\Omega)$ is thus $p_+$-smooth and $\Jppiuppiu$ is $(p_+-1)$-H{\"o}lder continuous, i.e.
\[
\exists K_1>0 \quad\text{s.t.}\quad \forall y_1,y_2\in L^{p_+}(\Omega)\quad  \|\Jppiuppiu(y_1)-\Jppiuppiu(y_2)\|_{({p_+})^*}\le K_1 \|y_1-y_2\|_{p_+}^{p_+-1}.
\]
Using this combined with the linearity of $A$ and the submultiplicativity of the norm, we can thus write
\begin{equation*}
\begin{aligned}
& \|\nabla f(u)-\nabla f(v)\|\dualnorm = \Big\| A^*\Big[ \Jppiuppiu (Au-y) - \Jppiuppiu (Av-y) \Big] \Big\|\dualnorm \\ 
& \le \|A^*\|\dualnorm \|\Jppiuppiu (Au-y) - \Jppiuppiu (Av-y)\|\dualnorm\le \|A^*\|\dualnorm K_1 \|A(u-v)\|_{p_+}^{p_+-1}\\
& \le K_1 \|A^*\|\dualnorm \|A\|_{p_+}^{p_+-1}\|u-v\|\pnorm^{p_+ -1} \le K\|u-v\|\pnorm^{p_+ -1} \qquad \forall \ u,v\in\lpvar,
\end{aligned}
\end{equation*}
showing that $\nabla f$ is $(p_+-1)$-H{\"o}lder continuous, as required. 

We can thus focus now on the computation of the solutions of \eqref{bredies} in the discrete setting, where the domain $\Omega$ is discretized into the disjoint sum of $n$ nonempty subsets, i.e. $\Omega = \uplus_{i=1}^n \Omega_i$. By considering a single real value on each subset $\Omega_i$, with a slight abuse of notation we simply denote by $\ell^{p(\cdot)}(\mathbb{R}^n)$, the $n$-th dimensional subspace of the sequence space $\ell^{p(\cdot)}$ generated by the first $n$ elements $e_1,e_2,\ldots,e_n$ of the canonical basis. To allow effective numerical resolution, we heavily exploit the separability property of the operators involved in the sense of Definition \ref{def:add_sep}.
By setting $\sigma^k:=A^* \Jppiuppiu (A\xk-y) \in \mathbb{R}^n$, the iteration \eqref{bredies}  of Algorithm \ref{alg_bredies} reads as
\begin{align}
 \xkk  & = \argmin_{u\in\ell^{p(\cdot)}(\mathbb{R}^n)} \left\{ \rhobarpvar(u-\xk) + \tau_k \langle \sigma^k,u\rangle+\tau_k \lambda \|u\|_1\right\}   \label{sparse1}\\
& = \argmin_{u\in\ell^{p(\cdot)}(\mathbb{R}^n)}  \sum_{i=1}^n \left\{ \frac{1}{p_i}|u_i-\xk_i|^{p_i}+\tau_k \sigma^k_i u_i + \tau_k \lambda |u_i| \right\}. \label{sparse2}
\end{align} 
Note that, thanks to the additive separability property, at each $k$-th iteration, with $k\geq 1$,  the $n$-dimensional minimization problem in \eqref{sparse1} corresponds to the sum of $n$ 1D problems. Hence, each component can be treated independently, so one can consider the independent minimization of the 1D functions:
\[
\Psi^{\text{Alg.P}}_{x,s,t,p}(u):=\frac{1}{p}|u-x|^p+s u + t|u|,
\]
with $x=\xk_i$, $s=\tau_k\sigma^k_i$, $t=\tau_k \lambda$ and $p=p_i$, where Alg.P stands for primal algorithm, which will be better clarified in Section \ref{sec:interpretation}. The minimizers of $\Psi^{\text{Alg.P}}_{x,s,t,p}$ can be computed by optimality as $ \Big(\partial\Psi^{\text{Alg.P}}_{x,s,t,p}\Big)^{-1}(0)$ and can be expressed in a compact form in terms of the thresholding function (see \cite{Bredies2008} for more details): 
\begin{equation}
\label{eq:thresholding1}
    T^{\text{Alg.P}}(x,s,t,p)=
    \begin{cases}
    x-\sign(s+t)|s+t|^{\frac{1}{p-1}} & \text{if } \, x > \sign(s+t)|s+t|^{\frac{1}{p-1}}\\
    x-\sign(s-t)|s-t|^{\frac{1}{p-1}} & \text{if } \, x < \sign(s-t)|s-t|^{\frac{1}{p-1}}\\    0 & \text{otherwise,}
    \end{cases}
\end{equation} 
\begin{figure}[t!]
\centering
\begin{subfigure}[b]{0.325\textwidth}
\includegraphics[width=\textwidth]{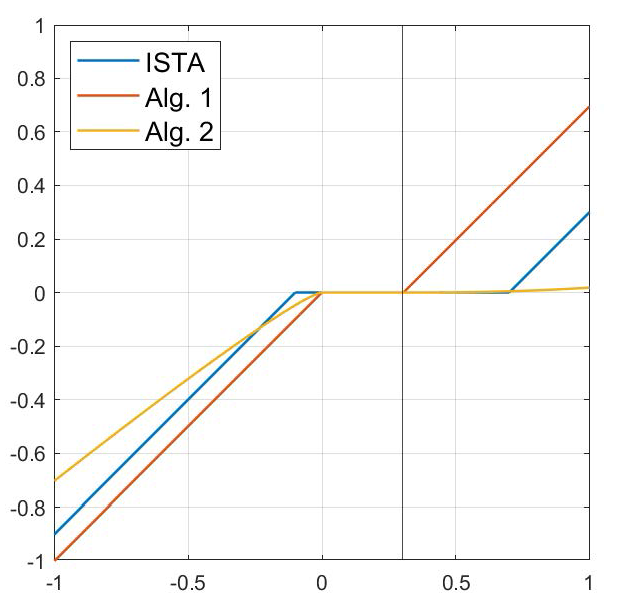}
\caption{Thresh. functions}
\label{fig:threshold(a)}
\end{subfigure}
\begin{subfigure}[b]{0.325\textwidth}
\includegraphics[width=\textwidth]{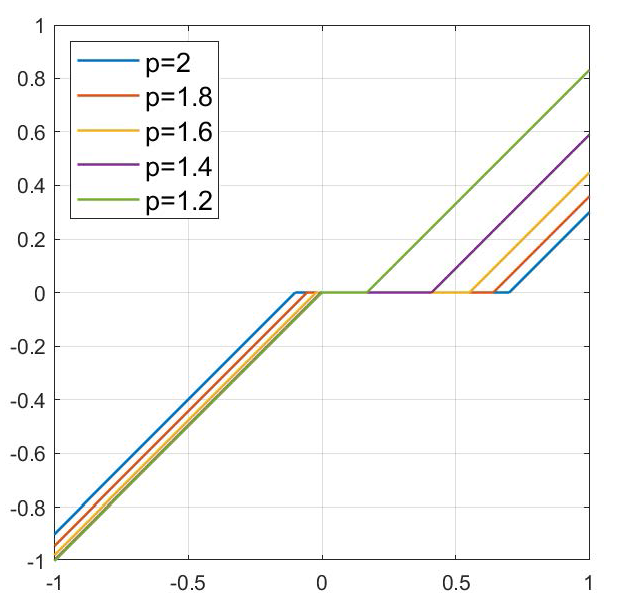}
\caption{$T^{\text{Alg.P}}(\cdot,s,t,p)$}
\label{fig:threshold(b)}
\end{subfigure}
\begin{subfigure}[b]{0.325\textwidth}
\includegraphics[width=\textwidth]{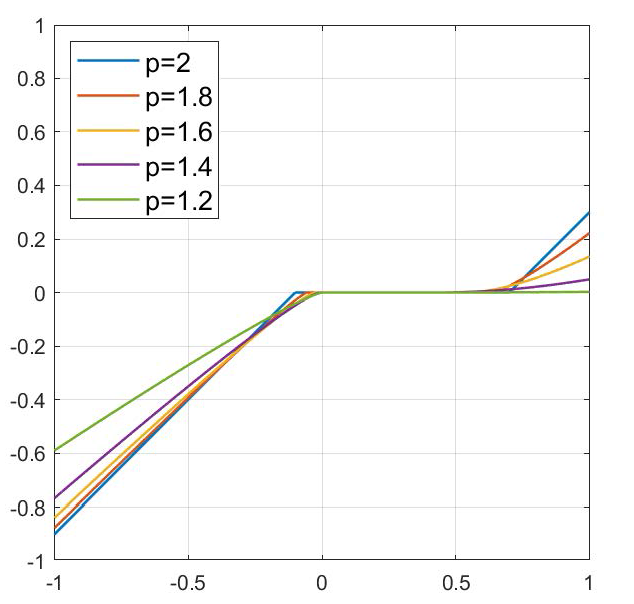}
\caption{$T^{\text{Alg.D}}(\cdot,s,t,p)$}
\label{fig:threshold(c)}
\end{subfigure}
\caption{(a) 1D thresholding functions $T^{\text{Alg.P}}(\cdot,s,t,p)$, $T^{\text{Alg.D}}(\cdot,s,t,p)$ and $T^{\text{ISTA}}(\cdot,s,t)$ with $p=1.3$, $s=0.3$, $t=0.4$. (b) $T^{\text{Alg.P}}(\cdot,s,t,p)$ with $s=0.3$, $t=0.4$ and $p\in\left\{1.2,1.4,1.6,1.8,2\right\}$. (c) $T^{\text{Alg.D}}(\cdot,s,t,p)$ with $s=0.3$, $t=0.4$ and $p\in\left\{1.2,1.4,1.6,1.8,2\right\}$.
}
\label{fig:threshold}
\end{figure}
As discussed in the previous sections, a direct application of the proximal-gradient algorithm in Banach spaces studied in \cite{Bredies2008} to the $\ell^{p(\cdot)}(\mathbb{R}^n)$ scenario would have been rendered more challenging due to the nonseparability of the norm $\|\cdot\|_{\ell^{p(\cdot)}}$ and of the corresponding $r$-duality map, given by
\begin{equation*}
    \mathbf{J}_{\ell^{p(\cdot)}(\mathbb{R}^n)}^r(x)= \frac{1}{
 {\sum_{j=1}^n\frac{p_j |x_j|^{p_j}}{\|x\|_{\ell^{p(\cdot)}}^{p_j}}}
 }
 \Big({\frac{1}{\|x\|_{\ell^{p(\cdot)}} ^{{p_i}-{r}}}{p_i} |x_i|^{{p_i}-1}\sign(x_i)\Big)_{i=1,\cdots,n}}.
\end{equation*}

We can similarly focus on Algorithm \ref{alg_guansong}, about which, we remark, Assumption \ref{ass1} holds with $\texttt{p}=p_+$. Assumption \ref{ass2} is hard to verify in practice, although numerical tests show that it is not that challenging to find a step-size for which convergence is guaranteed. Proceeding similarly as above, we exploit again the separability of the modular appearing in the computation of the $k$-th iteration \eqref{alg1} of Algorithm \ref{alg_guansong}, which leads to the computation of the minimizers of the following $1$D function 
$$
\Psi^{\text{Alg.D}}_{x,s,t,p}(u)=\frac{1}{p}|u|^p-{|x|^{p-1}\sign(x)}u+{s} u + t|u|
$$
where $x=(\xk)_i,\ s=\tau_k\sigma^k_i,\ t=\tau_k \lambda,\ p=p_i$ and Alg.D stands for dual algorithm as explained in Section \ref{sec:interpretation}. Such minimizers are given by $\Big(\partial\Psi^{\text{Alg.D}}_{x,s,t,p}\Big)^{-1}(0)$ and correspond to the following thresholding function:
\begin{equation} 
\label{eq:thresholding2} 
    T^{\text{Alg.D}}(x,s,t,p)=
    \begin{cases}
    (|x|^{p-1}\sign(x)-s-t)^{\frac{1}{p-1}} & \text{if } \, |x|^{p-1}\sign(x)>s+t\\
    -(s-t-|x|^{p-1}\sign(x))^{\frac{1}{p-1}} & \text{if } \, |x|^{p-1}\sign(x)<s-t\\
    0 & \text{otherwise.}
    \end{cases}
\end{equation}
Figure \ref{fig:threshold(a)} shows a comparison between the thresholding functions \eqref{eq:thresholding1} and \eqref{eq:thresholding2} with the classical soft-thresholding function 
\begin{equation}   \label{eq:thresholdingISTA}
    T^{\text{ISTA}}(x,s,t)=
    \begin{cases}
    x-s-t & \text{if } \, x>s+t\\
    x-s+t & \text{if } \, x<s-t\\
    0 & \text{otherwise,}
    \end{cases}
\end{equation}
which corresponds to the minimization of the $1$D function $\Psi^{\text{ISTA}}_{x,s,t}(u)=\frac{1}{2}(u-x+s)^2+t|u|$ appearing in the solution of the $\ell_2-\ell_1$ LASSO optimization problem in the Hilbert space $\ell_2$ by means of the standard ISTA algorithm \cite{Daubechies2003}.
It is interesting to point out that differently from the ISTA thresholding function \eqref{eq:thresholdingISTA}, both  thresholding functions \eqref{eq:thresholding1} and \eqref{eq:thresholding2}  are no longer symmetrical with respect to the vertical line $x=s$. Moreover, 
$T^{\text{Alg.P}}(\cdot,s,t,p)$ remains linear in $x$ similarly to $T^\text{ISTA}(\cdot,s,t)$, as shown in Figure \ref{fig:threshold(b)} for some exemplar values of $p$, while the thresholding function $T^{\text{Alg.D}}$ is, in general, nonlinear, as shown in Figure \ref{fig:threshold}. Note that for $p=2$, both $T^{\text{Alg.P}}(\cdot,s,t,2)$ and $T^{\text{Alg.D}}(\cdot,s,t,2)$ coincide with the standard soft-thresholding $T^{\text{ISTA}}(\cdot,s,t)$ operator.

\subsection{Interpretation as primal and dual algorithms}  \label{sec:interpretation}
In this section, we analyze the relationship between the thresholding functions \eqref{eq:thresholding1} and \eqref{eq:thresholding2}  of algorithms \ref{alg_bredies} and \ref{alg_guansong}, respectively, and the primal and dual iterative methods described, e.g., in \cite{Schuster2012}, similarly to the analogous considerations sketched in Section \ref{sec:intro} for the algorithms in \cite{Bredies2008} and  \cite{GuanSong2015} in the case $g\equiv 0$. In order to extend our considerations to the nonsmooth case, we need to provide an analytical expression of duality mappings in $\lp$ spaces, which, for better readability, will be done in the discrete setting, that is in $\ell^{p(\cdot)}(\mathbb{R}^n)$.

As a first remark, notice that given a fixed $p$ and $r$ such that $1<p,r<+\infty$, the $r$-duality mapping of $\ell^{p}(\mathbb{R}^n)$ can be defined in terms of componentwise maps $\mathbf{j}_p:\mathbb{R}\to\mathbb{R}$ defined by
$
         \mathbf{j}_{p}(t) := |t|^{p-1}\sign(t)\,,
$
as follows
$$ \;
\mathbf{J}_{\ell^p}^{r}(x)=
        \|x\|_{p}^{r-p}\left(
        \mathbf{j}_{p}(x_i)
        \right)_{i=1,\ldots,n}.
$$
Thanks to the isometric isomorphism between $\left(\ell^p\right)^*$ and $\ell^{p'}$, the inverse of such duality mapping is $\left(\mathbf{J}_{\ell^p}^{r}\right)^{-1}=\mathbf{J}_{(\ell^p)^*}^{r'}=\mathbf{J}_{\ell^{p'}}^{r'}$, which can be explicitly written as:
\begin{align}
    \mathbf{J}_{\ell^{p'}}^{r'}(x)& =
     \|x\|_{p'}^{r'-p'}
    \left(
        \mathbf{j}_{p'}(x_i)
        \right)_{i=1,\ldots,n} =
    \|x\|_{p'}^{r'-p'}\Big(|x_i|^{p'-1}\sign(x_i)\Big)_{i=1,\cdots,n} \notag\\
    & = \|x\|_{p'}^{r'-p'}\Big(|x_i|^{\frac{1}{p-1}}\sign(x_i)\Big)_{i=1,\cdots,n}. \notag
\end{align}
Comparing now the thresholding function $T^{\text{Alg.P}}(\cdot,s,t,p)$ in \eqref{eq:thresholding1} with the above formulas, we observe that we can rewrite it in terms of the componentwise maps $\mathbf{j}_{p'}$ associated to the duality map $\mathbf{J}_{\ell^{p'}}^{r'}$ as:
\begin{equation}\label{eq:thresholding1_new}
    T^{\text{Alg.P}}(x,s,t,p)=
    \begin{cases}
    x-\mathbf{j}_{p'}(s+t) & \text{if } x>\sign(s+t)|s+t|^{\frac{1}{p-1}}\\
    x-\mathbf{j}_{p'}(s-t) & \text{if } x< \sign(s-t)|s-t|^{\frac{1}{p-1}}\\    0 & \text{otherwise,}
    \end{cases}
\end{equation}
This new formulation is helpful to better show that as the forward gradient step is computed in the primal space: the pointwise map $\mathbf{j}_{p'}$ is required to associate the element $s+t$ of the dual space to an element in the primal space, thus making the computation well defined. In agreement with what has been discussed in the introduction, see \eqref{eq:primal_algo}, we can thus interpret this proximal-gradient iteration as a \emph{primal} algorithmic iteration.

As far as the thresholding function  $T^{\text{Alg.D}}(\cdot,s,t,p)$ in 
\eqref{eq:thresholding2} is concerned, we notice that we can proceed similarly and rewrite it as
{
\begin{align} 
\label{eq:thresholding2_new}
    T^{\text{Alg.D}}(x,s,t,p) & =
    \begin{cases}
    (\mathbf{j}_p(x)-s-t)^{\frac{1}{p-1}} & \text{if } \, |x|^{p-1}\sign(x)>s+t\\
    -(s-t-\mathbf{j}_p(x))^{\frac{1}{p-1}} & \text{if } \, |x|^{p-1}\sign(x)<s-t\\
    0 & \text{otherwise.}
    \end{cases}   \notag \\
    & =
    \begin{cases}
    \mathbf{j}_{p'}(\mathbf{j}_p(x)-s-t) & \text{if } \, |x|^{p-1}\sign(x)>s+t\\
    -\mathbf{j}_{p'}(s-t-\mathbf{j}_p(x)) & \text{if } \, |x|^{p-1}\sign(x)<s-t\\
    0 & \text{otherwise.}
    \end{cases}   
\end{align}
}
We can observe that both pointwise maps $\mathbf{j}_{p}$ and $\mathbf{j}_{p'}$ associated to the duality maps $\mathbf{J}^r_{p}$ and  $\mathbf{J}^{r'}_{p'}$, respectively,  play a role here as both a primal-to-dual map is required to define the forward step in the dual space and a dual-to-primal one is needed to compute the backward step, analogously to what we have seen before. In agreement with what has been observed in the introduction, see \eqref{eq:dual_algo}, we thus interpret such iteration as a \emph{dual} proximal-gradient step.

Such considerations can now be similarly applied to our modular-based variable exponent case, where the role of the primal-to-dual map $\mathbf{J}_{p}^r$ is replaced by the map $\jrhobar$,  whose expression in $\ell^{p(\cdot)}(\mathbb{R}^n)$ reads
\begin{equation*}
   \jrhobar(x) =
   \left(\sign(x_i)|x_i|^{p_i-1}\right)_{i=1\ldots,n} = \left( \mathbf{j}_{p_i}(x_i)\right)_{i=1\ldots,n},
\end{equation*}
and depends on the pointwise mappings $\mathbf{j}_{p_i}(\cdot)$ defined above. Algorithm \ref{alg_bredies} should thus be interpreted as a \emph{primal} proximal-gradient algorithm with pointwise thresholding function \eqref{eq:thresholding1_new}, while Algorithm \ref{alg_guansong} should be understood as a \emph{dual} proximal-gradient algorithm with pointwise thresholding function \eqref{eq:thresholding2_new}.
\section{Numerical tests}
\label{sec:tests} In this section, we provide some numerical tests showing how the proposed model adapts to deal with a variety of signal and image deconvolution and denoising problems. We include further tests providing a numerical verification of the computational convergence properties of Algorithms \ref{alg_bredies} and \ref{alg_guansong}.

\subsection{Spike reconstruction}
\label{sec:example1}

As a first example, we consider a 1D signal reconstruction problem where we seek for sparse spikes defined on $\Omega=[0,1]$ to be reconstructed from blurred measurements corrupted with Gaussian noise, see Figure \ref{fig:bredies(a)}. 
In order to favor sparse reconstructions and reduce possible oversmoothing, the formulation of a reconstruction model in a Banach space $\mathcal{X}$ is considered, see, e.g.,  \cite{Bredies2008,Schuster2012}. 
Denoting by $A:\mathcal{X}\longrightarrow L^{2}(\Omega)$ the blurring operator and by $y\in {L^2(\Omega)}$ the measured data, we thus aim to solve 
\begin{equation}
    \argmin_{x\in\mathcal{X}} \frac{1}{2}\|Ax-y\|_{2}^{2} +\lambda \|x\|_1,\qquad \lambda>0
    \label{var_model_2}
\end{equation}
where, in particular, we set $\mathcal{X}=\lpvar$ with $p_-=1.6$ and $p_+=2$, as shown in orange in Figure \ref{fig:bredies(a)}. The idea is to choose a higher value of $p(\cdot)$ where it is more likely to have a (spike) signal, while lower values are preferred elsewhere, so that, for these points, sparsity is enforced to a stronger extent. Note that the choice of the exponent map $p(\cdot)$ acts in fact as a prior model on the signal, together with the penalty term. To incorporate such prior knowledge, one can look directly at the shape of the data $y$, or, for instance, to the structure of a standard $\ell_2-\ell_1$ reconstruction computed after a small number of iterations, in order to have a variable exponent $p(\cdot)$ consistent with an approximated (possibly oversmoothed) solution of the problem, see \cite{Estatico2018} for more details on the choice of $p(\cdot)$.
Having chosen the exponent map $p(\cdot)$, we can thus solve \eqref{var_model_2} on $\mathcal{X}=\lpvar$ by means, e.g., of Algorithm \ref{alg_bredies}.

\begin{figure}[t!]
    \centering
    \begin{subfigure}[b]{0.325\textwidth}
    \includegraphics[width=\textwidth]{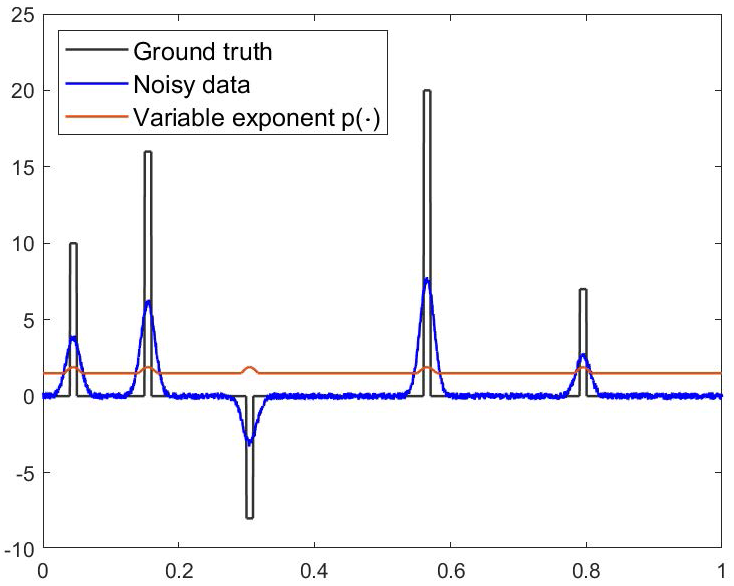}
    \caption{Data and $p(\cdot)$ map}
    \label{fig:bredies(a)}
    \end{subfigure}
    \begin{subfigure}[b]{0.325\textwidth}
    \includegraphics[width=\textwidth]{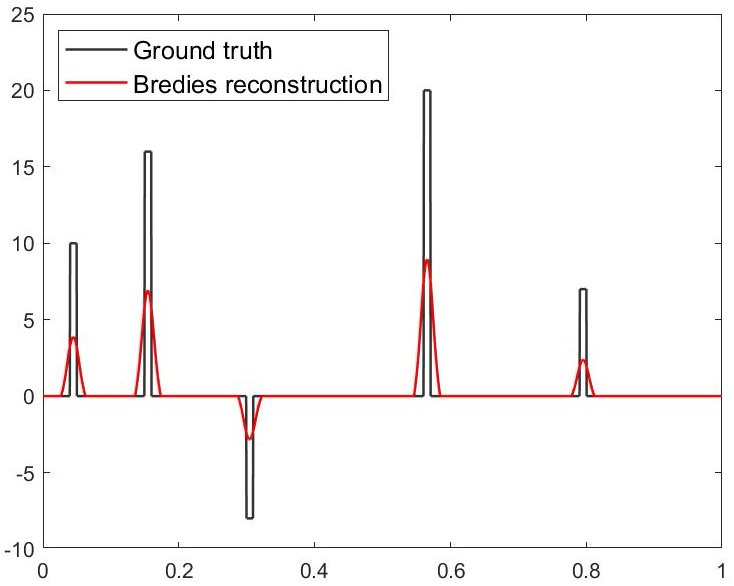}
    \caption{$L^{1.7}(\Omega)$ reconstruction}
    \label{fig:bredies(c)}
    \end{subfigure}
    \begin{subfigure}[b]{0.325\textwidth}
    \includegraphics[width=\textwidth]{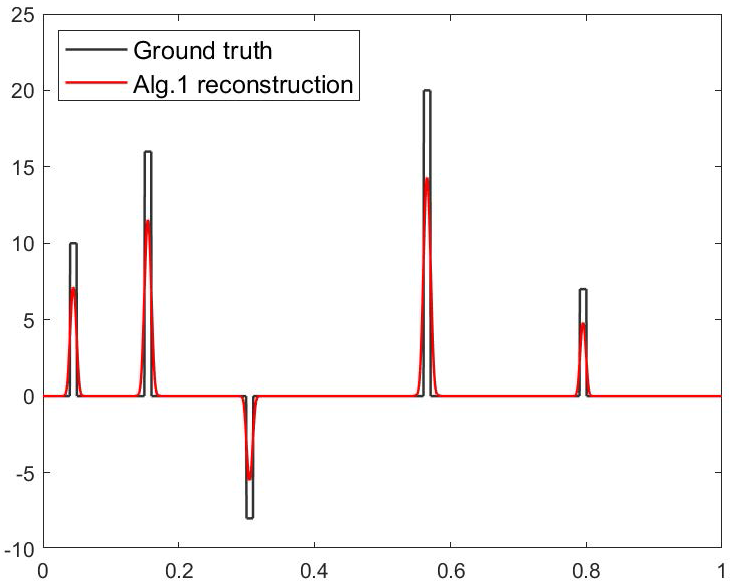}
    \caption{$\lpvar$ reconstruction}
    \label{fig:bredies(d)}
    \end{subfigure}
    \caption{Parameters: $\tau_k\equiv 0.5$; $\lambda=10^{-2}$. Stopping criterion based on the normalized relative change between $\xk$ and $\xkk$: $\|\xk-\xkk\|_2/\|\xk\|_2<10^{-4}$.}
\label{fig:bredies}
\end{figure}

In order to provide a comparison with existing models, we further consider problem $\eqref{var_model_2}$ on  $\mathcal{X}=L^{p}(\Omega)$ for $p=1.7$ and solve it by means of the algorithm in \cite{Bredies2008}. 
We observe that using  $\lpvar$ modeling improves the quality of the reconstruction with respect to a fixed $\lp$ modeling. 

\subsection{Deconvolution of heterogeneous signals}
\label{sec:segnalemisto}

We now consider the signal deconvolution problem of a blurred and noisy data $y\in{L^2(\Omega)}$ corrupted by Gaussian noise of a 1D heterogeneous signal $x\in\mathcal{X}$ with $\Omega=[0,1]$ composed of spikes and a smooth part, as shown in Figure \ref{fig:sec6_2(f)}. We consider the $\ell_2$ norm of the residual as data term, so that  model \eqref{var_model_2} is used as reconstruction criterion
with blurring operator $A:\mathcal{X}\longrightarrow L^2(\Omega)$.

\begin{figure}[t]
\centering
\begin{subfigure}[b]{0.325\textwidth}
\includegraphics[width=\textwidth]{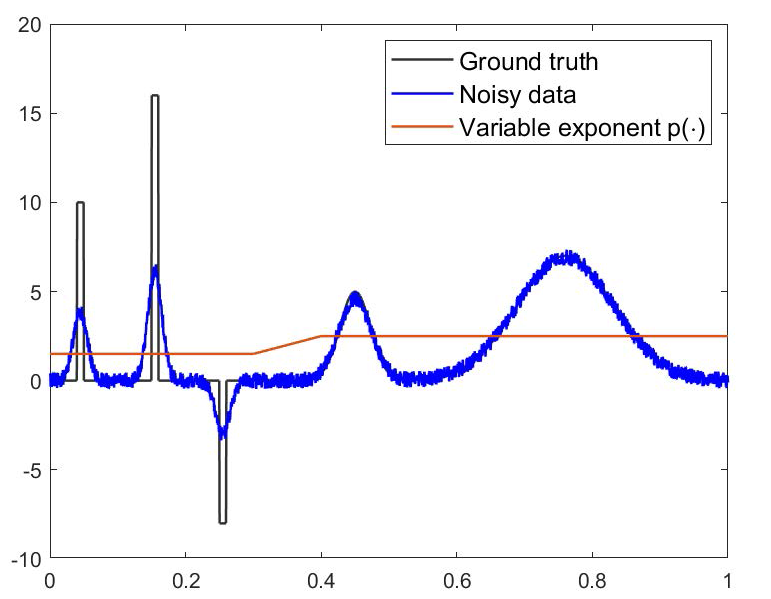}
\caption{Data and $p(\cdot)$ map}
\label{fig:sec6_2(f)}
\end{subfigure}
\begin{subfigure}[b]{0.325\textwidth}
\includegraphics[width=\textwidth]{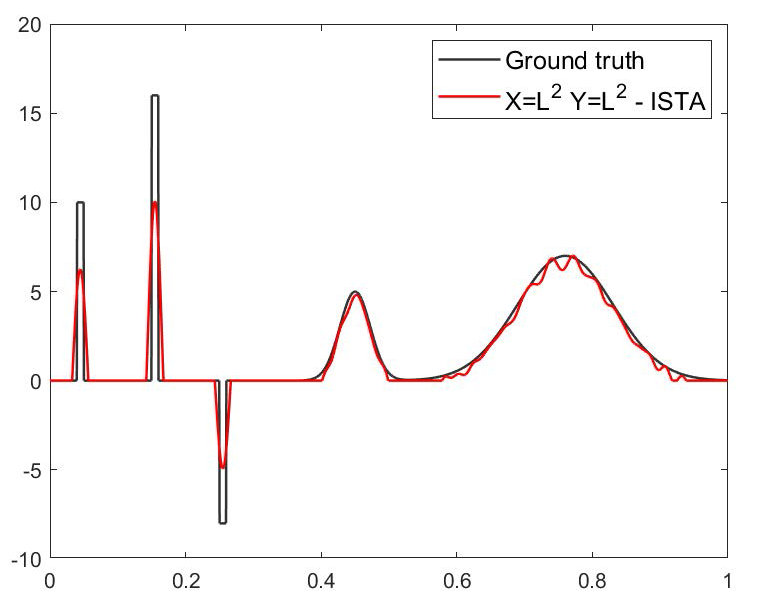}
\caption{$A: L^2\rightarrow L^2$, ISTA}
\label{fig:sec6_2(a)}
\end{subfigure}\\
\begin{subfigure}[b]{0.325\textwidth}
\includegraphics[width=\textwidth]{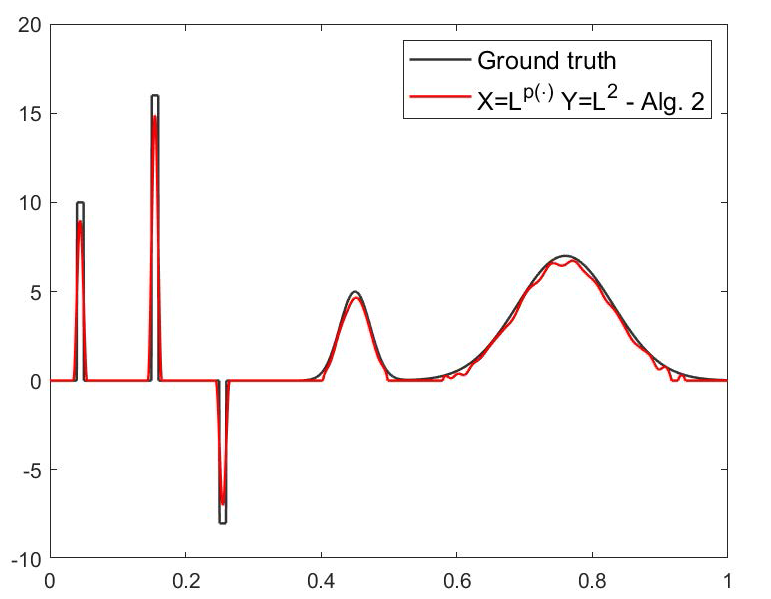}
\caption{$A: L^{p(\cdot)}\rightarrow L^2$, Alg. \ref{alg_guansong}}
\label{fig:sec6_2(b)}
\end{subfigure}
\begin{subfigure}[b]{0.325\textwidth}
\includegraphics[width=\textwidth]{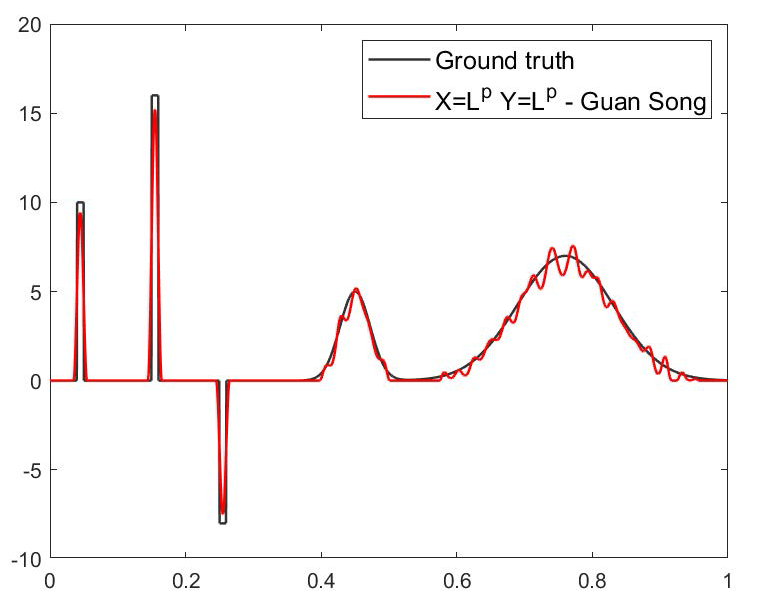}
\caption{$A: L^{1.5}\rightarrow L^{1.5}$, Alg. \cite{GuanSong2015}}
\label{fig:sec6_2(c)}
\end{subfigure}
\begin{subfigure}[b]{0.325\textwidth}
\includegraphics[width=\textwidth]{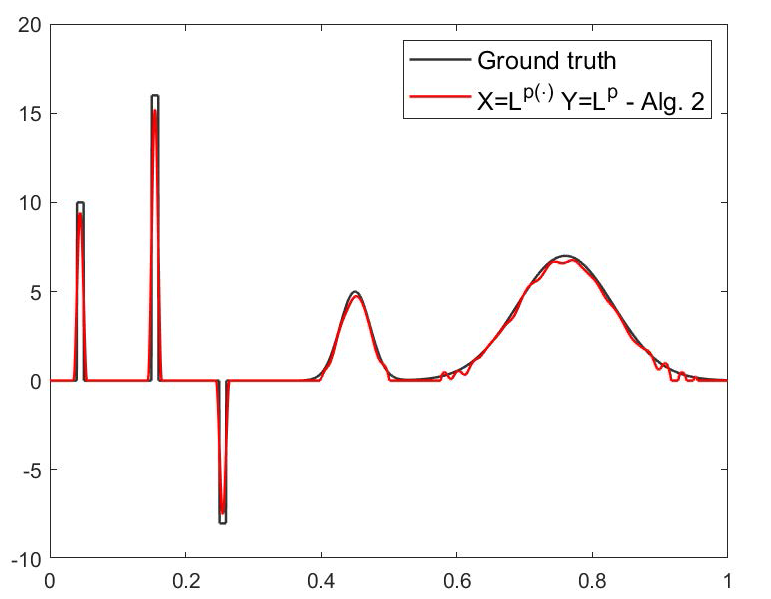}
\caption{$A: L^{p(\cdot)}\rightarrow L^{1.5}$, Alg. \ref{alg_guansong}}
\label{fig:sec6_2(d)}
\end{subfigure}
\caption{The variable exponent $1.5=p_-\le p(\cdot)\le 2=p_+$ considered in Figures \ref{fig:sec6_2(b)}, \ref{fig:sec6_2(c)} and \ref{fig:sec6_2(d)} is shown in orange in Figure \ref{fig:sec6_2(f)}. Parameters: $\tau_k\equiv 0.5$; $\lambda=5*10^{-3}$. Stopping criterion based on the relative distance between $\xk$ and $\xkk$: $\|\xk-\xkk\|_2/\|\xk\|_2<4 * 10^{-6}$.}
\label{fig:sec6_2}
\end{figure}

In Figures \ref{fig:sec6_2(a)} and \ref{fig:sec6_2(b)} we report the reconstructions obtained by solving \eqref{var_model_2} with solution spaces $\mathcal{X}=L^2(\Omega)$ using ISTA algorithm \cite{Daubechies2003}, and $\mathcal{X}=L^{p(\cdot)}(\Omega)$ using Algorithm \ref{alg_guansong}, respectively, for the particular choice of exponent map $p(\cdot)$ having $p_-=1.5$ and $p_+=2$ shown in orange in Figure \ref{fig:sec6_2(f)}. We observe that the spikes in the left-hand side are better reconstructed when considering $\lpvar$ as solution space, due to its locally enhanced sparsifying property.

Similarly as in \cite{Bredies2008}, we could also consider an $\ell_p$, $1<p<2$, fidelity to better restore spikes. Consistently, we can consider $A:\mathcal{X}\longrightarrow L^p(\Omega)$ with $p=1.5$ and the following associated variational model
\begin{equation}
    \argmin_{x\in\mathcal{X}} \frac{1}{p}\|Ax-y\|_{p}^{p} +\lambda \|x\|_1,\qquad \lambda>0.
    \label{var_model_2_bis}
\end{equation}
Figures \ref{fig:sec6_2(c)} and \ref{fig:sec6_2(d)} show the reconstructions obtained with $\mathcal{X}=L^{1.5}(\Omega)$ using the algorithm proposed in \cite{GuanSong2015} and $\mathcal{X}=L^{p(\cdot)}(\Omega)$ using Algorithm \ref{alg_guansong} respectively. We observe that smooth regions are better restored with reduced ringing effect artifacts in Figure \ref{fig:sec6_2(d)} thanks to the flexible choice of the solution space.

Working with a variable exponent allows us to deal with the different nature of the signal in a more flexible way.

\subsection{1D and 2D mixed noise removal}
\label{sec:rumoremisto}

We now focus on a mixed noise removal problem for blurred signals and images affected by Gaussian and impulsive (salt and pepper) noise, in different and disjoint parts of their spatial domain $\Omega$, with $\Omega=[0,1]$ and $\Omega$ being a compact of $\R^2$, respectively. We exploit here the flexibility of Lebesgue spaces with variable exponent by treating effectively the different noise nature at the same time. 

For the 1D example, the measured blurred and noisy signal is shown in Figure \ref{fig:sec6_3_1d(a)}:  Gaussian noise is artificially added on the left part of the domain while impulsive noise is added on the right part.
Let $\mathcal{X},\mathcal{Y}$ be two Lebesgue spaces with variable exponent maps $p^{\mathcal{X}}(\cdot), p^{\mathcal{Y}}(\cdot)$, respectively, and let $A\in\mathcal{L}(\mathcal{X},\mathcal{Y})$. To retrieve the sparse underlying signal, let us consider the following problem:
\begin{equation}
    \argmin_{x\in\mathcal{X}} \bar{\rho}_{p^\mathcal{Y}(\cdot)}(Ax-y)+\lambda \|x\|_1, \qquad \lambda>0.
    \label{eq1}
\end{equation}

The blurred signal $y\in\mathcal{Y}$ is corrupted by Gaussian noise on the left-hand side and by impulsive noise on the right-hand side (in blue in Figure \ref{fig:sec6_3_1d(a)}). Choosing $p^\mathcal{X}(\cdot)=p^\mathcal{Y}(\cdot)\equiv 2$ in \eqref{eq1} (and thus naturally setting $\mathcal{X}=\mathcal{Y}=L^2(\Omega)$) forces the fidelity term to reduce to $\|Ax-y\|^2_2$ which is well-known to be the most appropriate term to describe Gaussian noise degradation. Hence, in this case, an $\ell_2-\ell_1$ variational model formulated in $L^2(\Omega)$ is obtained. For solving it numerically, the standard ISTA algorithm \cite{Daubechies2003} can be used. The corresponding reconstruction is shown in Figure \ref{fig:sec6_3_1d(b)}: we note that the reconstruction on the left-hand side is good, while several artifacts can be observed on the right-hand side, due to the poor adaptivity of the model to the different noise nature there. A data term more adapted to describing the sparse nature of impulsive noise should be considered, such as, ideally, an $\ell_1$ fidelity; see \cite{Nikolova2004}.  In our modeling, however, exponents $p=1$ cannot be chosen as they would correspond to a nonsmooth data term defined in a nonreflexive Banach space. However, exponents $p\approx 1$ can still be chosen. We thus consider as output space a Lebesgue space $\mathcal{Y}$ with variable exponent map $p^\mathcal{Y}(\cdot)=p(\cdot)$ shown in orange in Figure \ref{fig:sec6_3_1d(a)}: it is equal to 2 in the part of the domain where the signal is corrupted by Gaussian noise and equal to 1.4 elsewhere. The space-variant fidelity defined in this way is, therefore, differentiable and locally adapted to both natures of the noise. In Figure \ref{fig:sec6_3_1d(c)} we show the ISTA reconstruction obtained by choosing the solution space as $\mathcal{X}=L^2(\Omega)$: our choice of fidelity and measurement space $\mathcal{Y}$ is better adapted to both noise distributions and favors the removal of impulsive noise from data, but the spikes on the right-hand side suffer from some intensity loss. To improve upon this drawback, the solution space $\mathcal{X}=\lpvar$ with the same choice of variable exponent map considered for the output space can be considered. In Figure \ref{fig:sec6_3_1d(d)}, we show the reconstruction obtained by solving \eqref{eq1} with $\mathcal{X}=\mathcal{Y}=\lpvar$ via Algorithm \ref{alg_guansong}. The choice of an adaptive solution space improves the quality of the reconstruction and endows the model with enough flexibility to provide a good reconstruction of the signal over the whole domain.

A final comparison with  $\mathcal{Y}=\lp$ with $p=1.4$ in \eqref{eq1} is reported in Figures \ref{fig:sec6_3_1d(g)} and \ref{fig:sec6_3_1d(f)}. They show the reconstructions obtained by solving \eqref{eq1} with $\mathcal{Y}=L^{1.4}(\Omega)$ and solution spaces $\mathcal{X}=\lpvar$ and $\mathcal{X}=L^{1.4}(\Omega)$, respectively. 
Note that the reconstruction in Figure \ref{fig:sec6_3_1d(d)} is more accurate than the one in Figure \ref{fig:sec6_3_1d(f)}, in particular  on the left-hand side since the  spikes there are better reconstructed. 

\begin{figure}[t!]
\centering
\begin{subfigure}[b]{0.325\textwidth}
\includegraphics[width=\textwidth]{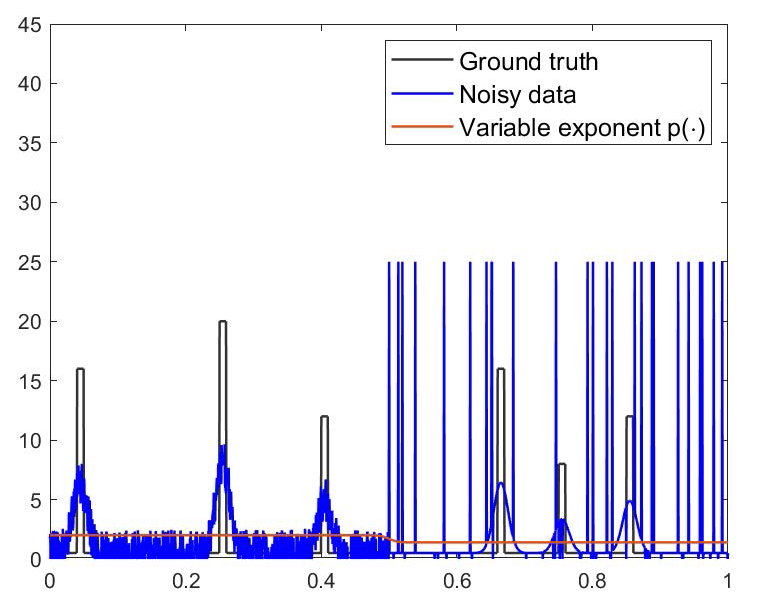}
\caption{Data and $p(\cdot)$ map}
\label{fig:sec6_3_1d(a)}
\end{subfigure}
\begin{subfigure}[b]{0.325\textwidth}
\includegraphics[width=\textwidth]{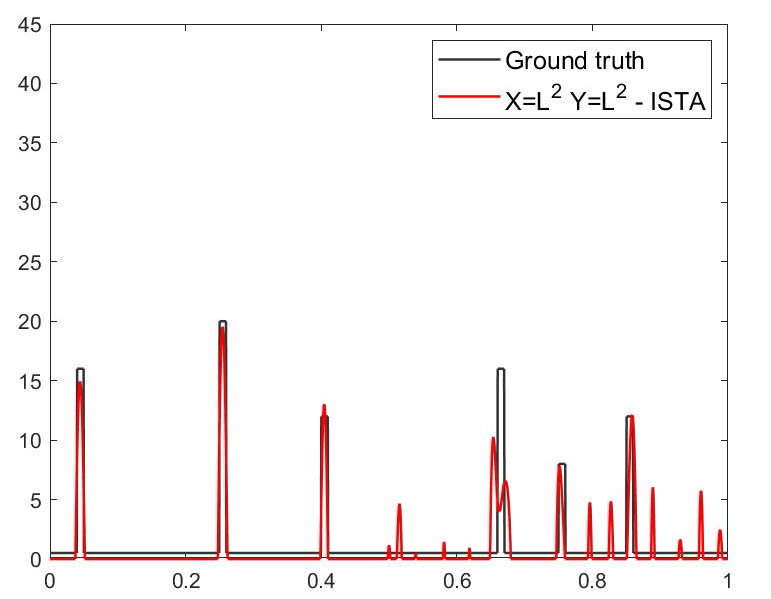}
\caption{$A: L^2\rightarrow L^2$, ISTA}
\label{fig:sec6_3_1d(b)}
\end{subfigure}
\begin{subfigure}[b]{0.325\textwidth}
\includegraphics[width=\textwidth]{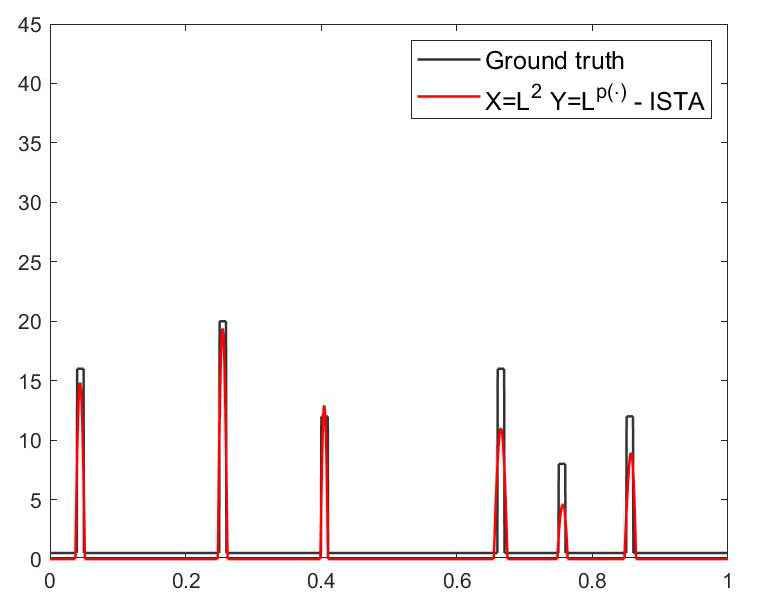}
\caption{$A: L^2\rightarrow L^{p(\cdot)}$, ISTA}
\label{fig:sec6_3_1d(c)}
\end{subfigure}\\
\begin{subfigure}[b]{0.325\textwidth}
\includegraphics[width=\textwidth]{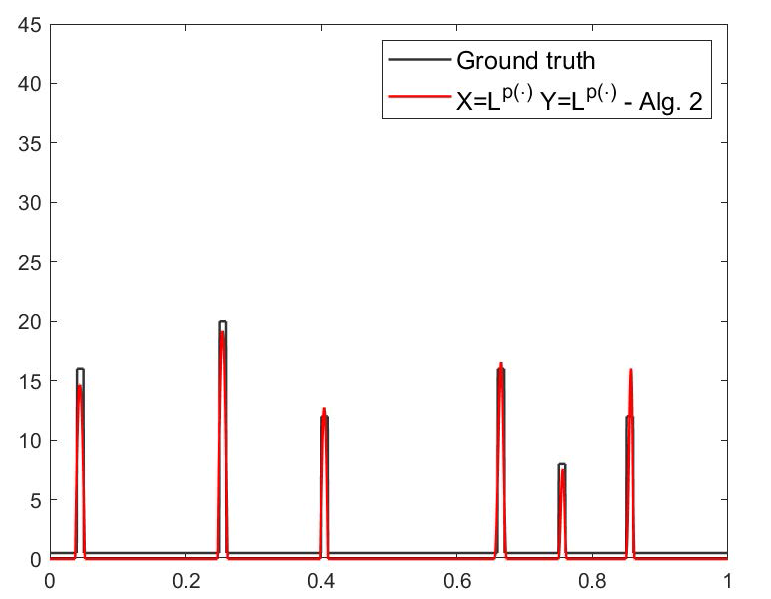}
\caption{$A: L^{p(\cdot)}\rightarrow L^{p(\cdot)}$, Alg. \ref{alg_guansong}}
\label{fig:sec6_3_1d(d)}
\end{subfigure}
\begin{subfigure}[b]{0.325\textwidth}
\includegraphics[width=\textwidth]{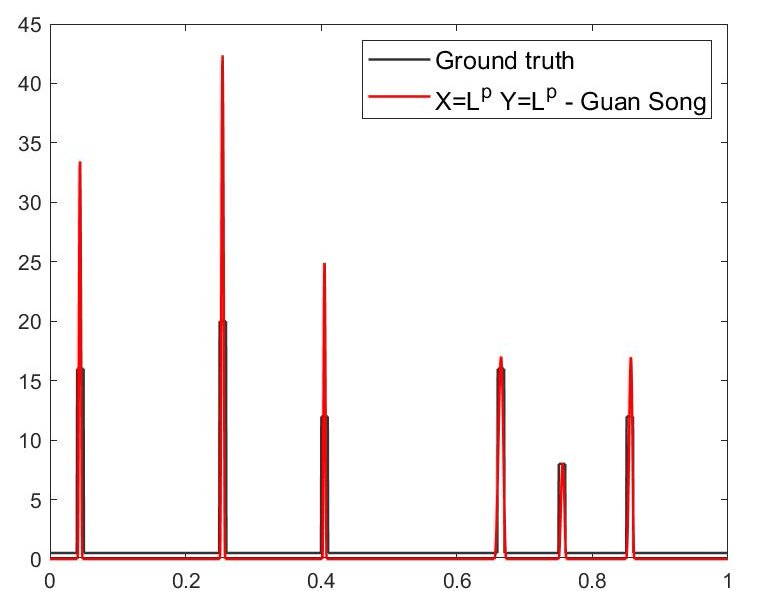}
\caption{$A: L^{1.4}\rightarrow L^{1.4}$, Alg. \cite{GuanSong2015}}
\label{fig:sec6_3_1d(g)}
\end{subfigure}
\begin{subfigure}[b]{0.325\textwidth}
\includegraphics[width=\textwidth]{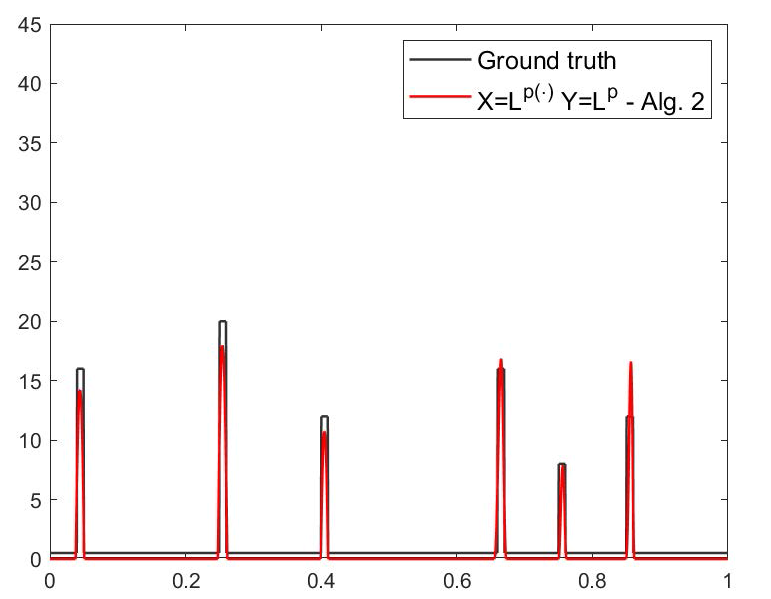}
\caption{$A: L^{p(\cdot)}\rightarrow L^{1.4}$, Alg. \ref{alg_guansong}}
\label{fig:sec6_3_1d(f)}
\end{subfigure}
\caption{The variable exponent $1.4=p_-\le p(\cdot)\le 2=p_+$ considered in Figures \ref{fig:sec6_3_1d(c)}, \ref{fig:sec6_3_1d(d)}  and \ref{fig:sec6_3_1d(f)} is shown in orange in Figure \ref{fig:sec6_3_1d(a)}. Parameters:  $\tau_k\equiv 0.1$, $\lambda=2*10^{-2}$.  Stopping criterion based on the relative distance between $\xk$ and $\xkk$: $\|\xk-\xkk\|_2/\|\xk\|_2<4*10^{-6}$.}
\label{fig:sec6_3_1d}
\end{figure}
\begin{figure}[h!]
    \centering
    \begin{subfigure}[b]{0.24\textwidth}
    \includegraphics[width=\textwidth]{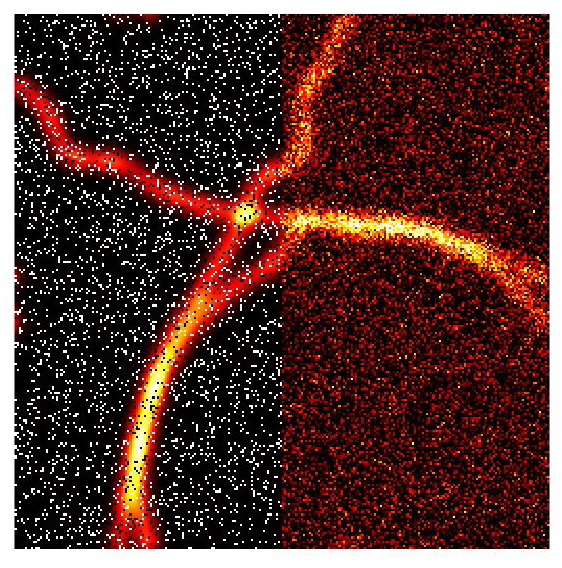}
    \caption{Noisy data}
    \label{fig:filamento(a)}
    \end{subfigure}
    \begin{subfigure}[b]{0.24\textwidth}
    \includegraphics[width=\textwidth]{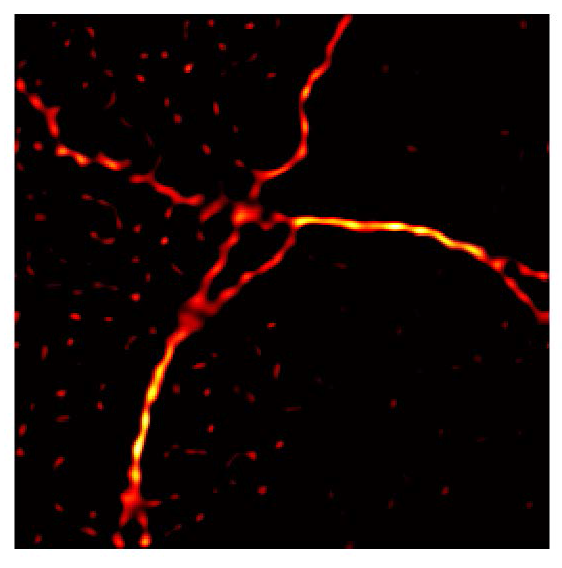}
    \caption{$\mathcal{X}=\mathcal{Y}=L^2(\Omega)$}
    \label{fig:filamento(b)}
    \end{subfigure}
    \begin{subfigure}[b]{0.24\textwidth}
    \includegraphics[width=\textwidth]{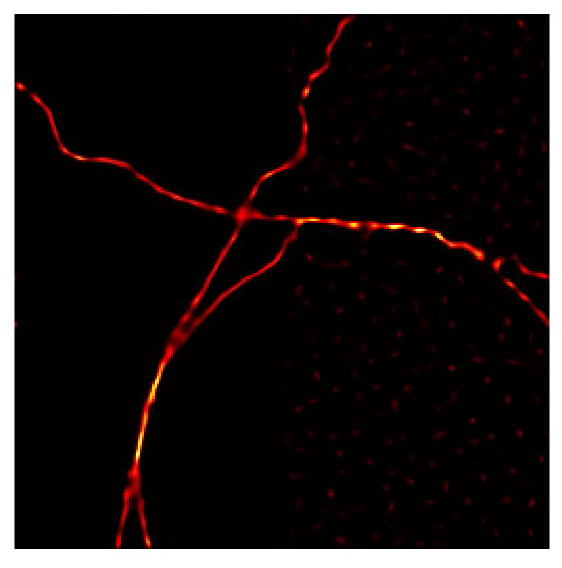}
    \caption{$\mathcal{X}=\mathcal{Y}=L^{1.4}(\Omega)$}
    \label{fig:filamento(c)}
    \end{subfigure}
    \begin{subfigure}[b]{0.24\textwidth}
    \includegraphics[width=\textwidth]{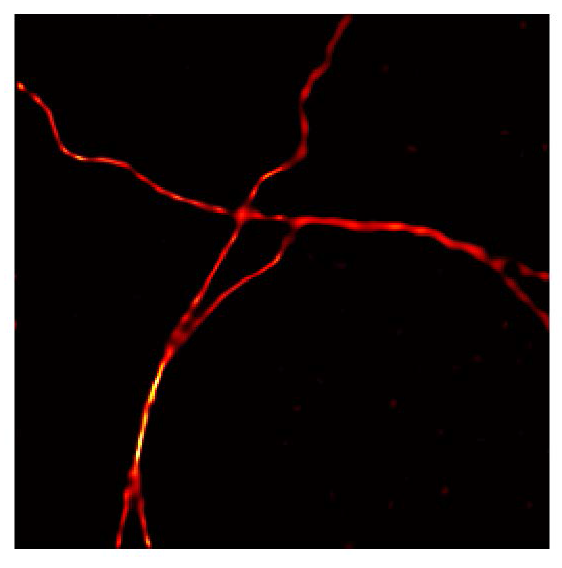}
    \caption{$\mathcal{X}=\mathcal{Y}=L^{p(\cdot)}(\Omega)$}
    \label{fig:filamento(d)}
    \end{subfigure}
    \caption{Parameters: $\tau_k\equiv 0.1$, $\lambda=0.1$. Stopping criterion based on the normalized relative change between $\xk$ and $\xkk$: $\|\xk-\xkk\|_2/\|\xk\|_2<10^{-4}$. }
\label{fig:filamento}
\end{figure}
As a similar test, we considered a mixed denoising problem for the blurred and noisy image in Figure \ref{fig:filamento(a)}, again with Gaussian noise on the left half and impulsive noise on the right half of the image domain. 
We solved again \eqref{eq1} for $p^\mathcal{X}(\cdot)=p^\mathcal{Y}(\cdot)\equiv 2$ by the ISTA algorithm, for $ p^\mathcal{X}(\cdot)=p^\mathcal{Y}(\cdot)\equiv p=1.4$ by the algorithm in \cite{GuanSong2015}, and for $ p^\mathcal{X}(\cdot)=p^\mathcal{Y}(\cdot)=p(\cdot)$, where $p(\cdot)$ is a variable exponent $p_-=1.4\leq p(\cdot) \leq p_+=2$  such that $p_i=2$ on the Gaussian noisy half and $p_i=1.4$ on the impulsive noisy half, by Algorithm \ref{alg_guansong}.
Observations analogous to those discussed for the 1D case can still be made and as one can clearly see, reconstruction artifacts are significantly reduced in the case of a variable exponent modeling. 

The flexibility of the model given by the choice of the map $p(\cdot)$ allows one to reconstruct signals with different spatial properties on the whole domain.

\subsection{A numerical study on convergence rates}
\label{sec:conv_rates}


The previous examples show that the use of a variable exponent can help in improving reconstruction quality. It is thus natural to ask which algorithm -- Algorithm \ref{alg_bredies} and Algorithm \ref{alg_guansong} -- should be used in practice. As remarked already in  \cite{Bredies2008} and as it can be observed from the convergence rate \eqref{eq:conv_Bredies},  Algorithm \ref{alg_bredies} is expected to be very slow in practice, in particular, slower than a gradient-type algorithm whose well-known convergence speed is of the order $O(1/k)$. 

In this section, we compare the speed of convergence of different algorithms when used as numerical solvers for the deblurring problem \eqref{var_model_2}  in the different Hilbert and Banach scenarios discussed in Section \ref{sec:example1}. In particular, we compare the speed of convergence for the ISTA algorithm used to solve \eqref{var_model_2}  in $\mathcal{X}=L^2(\Omega)$, with the algorithms proposed by Bredies in \cite{Bredies2008} and Guan and Song in \cite{GuanSong2015} for solving the same problem on $\mathcal{X}=\lp$ with $p=1.7$ and with Algorithms \ref{alg_bredies} and \ref{alg_guansong} proposed in this work for $\mathcal{X}=\lpvar$. As exponent map $p(\cdot)$, we stick with the choice shown in orange Figure \ref{fig:bredies(a)}. 

Given $x^*\in\mathcal{X}$, solution of \eqref{var_model_2} in the different spaces $\mathcal{X}$, we recall the convergence rates \eqref{eq:conv_Bredies} and \eqref{eq:conv_GS} for Algorithms \ref{alg_bredies} and \ref{alg_guansong}, respectively. In principle, to compare the speed of convergence in a precise way, a precomputation of $x^*$ by means of benchmark algorithms should be done for all the different scenarios discussed above. However, since to our knowledge there is no existing algorithm for solving \eqref{var_model_2} in $\mathcal{X}=\lpvar$, instead of computing $x^*$ we computed as a reference the value of $\tilde{x}$, solution of \eqref{var_model_2} with $\mathcal{X}=L^2(\Omega)$, by running ISTA for $2*10^4$ iterations. Note that by simple manipulations we have for Algorithm \ref{alg_bredies} 
\begin{equation*}
    \phi(\xk)-\phi(\Tilde{x})=\phi(\xk)\pm\phi(x^*)-\phi(\Tilde{x})\le \eta_1 \Big(\frac{1}{k}\Big)^{p_--1} + c 
\end{equation*}
and, similarly, for Algorithm \ref{alg_guansong} $\phi(\xk)-\phi(\Tilde{x})\le \eta_2 \Big(\frac{1}{k}\Big)^{\texttt{p}-1} +c$ with $\texttt{p}=2$ in \eqref{var_model_2}, so that rates can still be compared up to an additive constant. We use $\Tilde{x}$ also for the computation of the convergence rates for the Bredies \cite{Bredies2008} and Guan-Song \cite{GuanSong2015} algorithms having  $\phi(\xk)-\phi(\Tilde{x})\le \eta_{3,4} \Big(\frac{1}{k}\Big)^{\texttt{p}-1}+c'$. Finally, recall that for ISTA the convergence rate in function values is $\phi(\xk)-\phi(\Tilde{x})\le \eta_5 \frac{1}{k}$.
As stopping criterion, for all the tested algorithms we use the normalized relative rates with respect to $\Tilde{x}$ up to a tolerance parameter $\epsilon=10^{-4}$: $|\phi(\xk)-\phi(\Tilde{x})|/\phi(\Tilde{x})<\epsilon$. 

\begin{figure}[h!]    
    \centering
    \begin{subfigure}[b]{0.45\textwidth}
    \includegraphics[width=\textwidth]{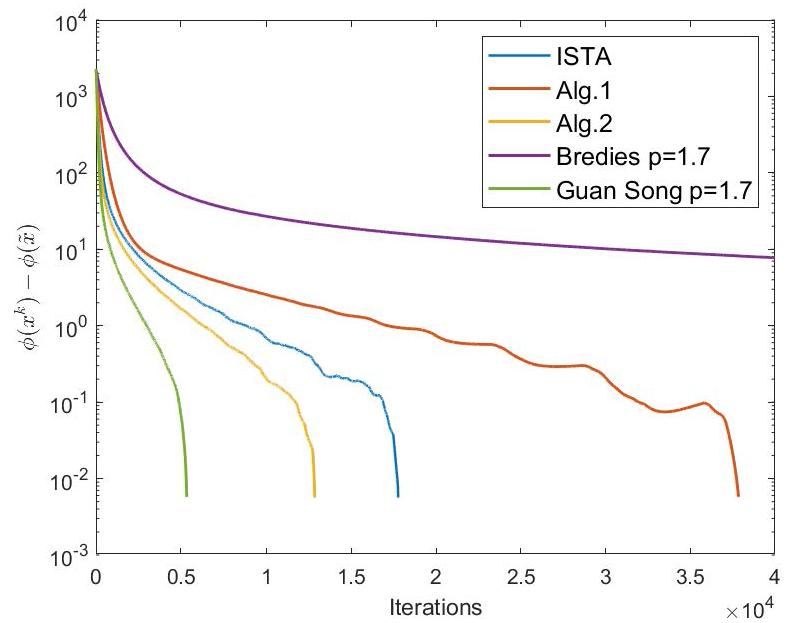}
    \caption{Iterations}
    \label{fig:opt(a)}
    \end{subfigure}
    \begin{subfigure}[b]{0.45\textwidth}
    \includegraphics[width=\textwidth]{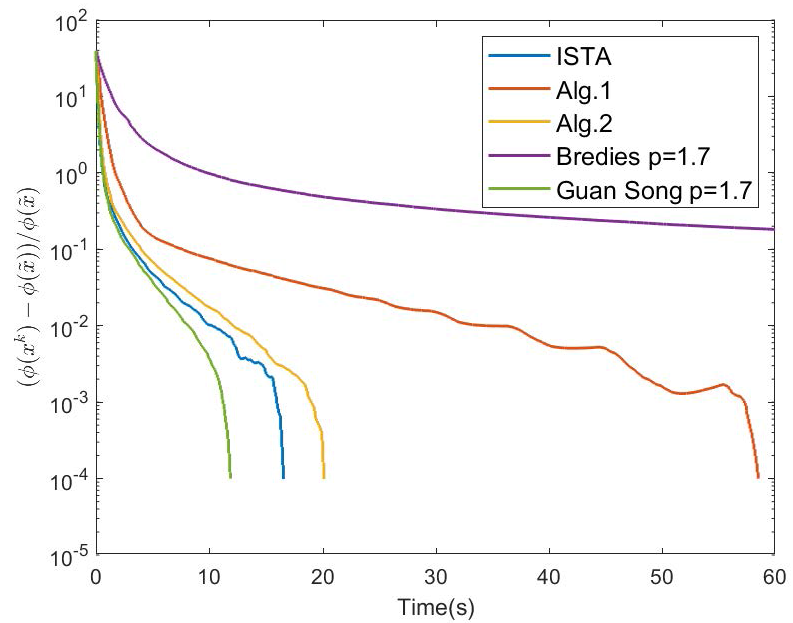}
    \caption{CPU times}
    \label{fig:opt(b)}
    \end{subfigure}
    \caption{(a) Relative rates along the first $4*10^4$ iterations. (b) Normalized relative rates along the first 60 seconds of CPU time.}
    \label{fig:opt}
\end{figure}
\begin{table}[h!]
\begin{tabular}{l|l|l|l|l|l|}
\cline{2-6}
                       & ISTA & Bredies \cite{Bredies2008} & Guan-Song \cite{GuanSong2015} & Alg. \ref{alg_bredies} & Alg. \ref{alg_guansong} \\ \hline
 \multicolumn{1}{|l|}{\# iterations} & 17768 & $5*10^5$ & 5352 & 37850 & 12849 \\ \hline
\multicolumn{1}{|l|}{CPU time} & 16.5s & 1025.7s & 11.9s & 58.5s & 20.0s \\ \hline
\end{tabular}
\caption{Algorithmic comparison: iterations required and CPU time till convergence.}
\label{tab:opt}
\end{table}

The results reported in Figure \ref{fig:opt}  and Table \ref{tab:opt} show several interesting numerical convergence properties. First, we note that although the Bredies (violet line) and Guan-Song (green line) algorithms are supposed to have the same convergence rate $O(1/k^{0.7})$ in theory for the specific problem at hand, they clearly have a very different  behavior. While the first needs more than $5*10^5$ iterations and more than $1000$ seconds of CPU time to reach convergence, the second converges with a much faster speed. The same behavior is observed also for the modular Algorithms \ref{alg_bredies} and \ref{alg_guansong} too, with the first (red line) being very slow and the second (yellow line) much faster. With respect to standard ISTA, we observe that the proposed Bregmanized algorithm and the Guan-Song one are faster in terms of number of iterations and comparable in terms of computational time. From these numerical verifications, we thus considered Algorithm \ref{alg_guansong} instead of Algorithm \ref{alg_bredies} in sections \ref{sec:segnalemisto} and \ref{sec:rumoremisto}. The acceleration of these algorithms is indeed an interesting matter for future research.

\section{Conclusions}
In this paper, we present two possible ways of generalizing proximal-gradient algorithms to solve structured minimization problems in $\lpvar$. Due to the difficulties introduced by working in a Banach space setting (i.e., the lack of Riesz isometric isomorphism and the need for duality mappings) and the further lack of separability of the norm in $\lpvar$, standard strategies cannot be applied here. Thus the use of alternative iterative schemes based on the separable modular function is required. Two algorithms are presented, a primal one (Algorithm \ref{alg_bredies}) inspired by the one studied in \cite{Bredies2008} for problems in $\lp$, and a dual one (Algorithm \ref{alg_guansong}), analogous to the one studied in \cite{GuanSong2015}.
We provide a detailed convergence analysis for Algorithm \ref{alg_bredies} showing the descent of the functional and convergence in function values with rate $\mathcal{O}\Big(\frac{1}{k}\Big)^{p_--1}$, with $p_-$ being the essential infimum of the exponent map $p(\cdot)$. For Algorithm \ref{alg_guansong} a modular Bregman-like distance is used in a dual update fashion and analogous convergence rates are shown in function values. Several numerical results are reported to compare the use of the adaptive $\lpvar$ space with  Hilbert and $L^p(\Omega)$ spaces on some exemplar signal deconvolution and signal/image mixed noise removal problems. Finally, a numerical verification on the speed of convergence and on the computational efficiency of the proposed algorithms is given, showing that \textit{primal} algorithms have slower convergence in comparison with \textit{dual} ones. 

Future work shall address the crucial question of how the map  $p(\cdot)$ shall be chosen in order to adapt to local signal/image prior contents. Moreover, it would be interesting to find a strategy to incorporate extrapolation/acceleration in order to improve convergence speed which, in practice, is particularly slow for primal algorithms.

\bibliographystyle{plain}
\bibliography{ref.bib}
\end{document}